\crefname{hypothesis}{Hypothesis}{Hypotheses}
\newtheorem{remark}{Remark}
\title{Asymptotic solutions of inhomogeneous differential equations having a turning point}
\author{T. M. Dunster\thanks{Department of Mathematics and Statistics, San Diego State University, 5500 Campanile Drive, San Diego, CA 92182, USA. 
  (\email{mdunster@sdsu.edu}, \url{https://tmdunster.sdsu.edu}).}}
\newcommand*{\addFileDependency}[1]{
  \typeout{(#1)}
  \@addtofilelist{#1}
  \IfFileExists{#1}{}{\typeout{No file #1.}}
}
\begin{document}

\maketitle

\begin{abstract}
  Asymptotic solutions are derived for inhomogeneous differential equations having a large real or complex parameter and a simple turning point. They involve Scorer functions and three slowly varying analytic coefficient functions. The  asymptotic approximations are uniformly valid for unbounded complex values of the argument, and are applied to inhomogeneous Airy equations having polynomial and exponential forcing terms. Error bounds are available for all approximations, including new simple ones for the well-known asymptotic expansions of Scorer functions of large complex argument.
\end{abstract}

\begin{keywords}
  {Asymptotic expansions, Airy functions, Scorer functions, Turning point theory, WKB methods}
\end{keywords}

\begin{AMS}
  34E05, 33C10, 34E20
\end{AMS}

\section{Introduction} 
\label{sec1}

In this paper we obtain asymptotic expansions for particular solutions for differential equations of the form
\begin{equation}
\label{eq1}
d^{2}w/dz^{2}-\left\{u^{2}f(z)+g(z)\right\}w=p(z),
\end{equation}
where $u$ is a large parameter, real or complex, and $z$ lies in an unbounded complex domain $Z$, which is precisely defined in \cref{homogeneous}. In $Z$ the functions $f(z)$ and $g(z)$ are meromorphic, and $p(z)$ ("forcing term") is analytic, and for simplicity we assume all are independent of $u$ (an assumption that can readily be relaxed). We further assume that $f(z)$ has no zeros in $Z$ except for a simple zero at $z=z_{0}$, which is the turning point of the equation. Our expansions will be valid in certain subsets of $Z$.

Equations of this form have a number of physical applications, such as the study of flow through variable permeability media \cite{Alzahrani:2016:NKF}, and in toroidal shell problems \cite{Tumarkin:1959:ASO}. Mathematical applications include inhomogeneous forms of the confluent, Weber and Bessel differential equations, and our new approximations will be applied in subsequent papers to these equations.

The homogeneous case ($p(z)=0$) has been extensively studied in the literature (see \cite[Chap. 11]{Olver:1997:ASF}), and more recently in \cite{Dunster:2017:COA} and \cite{Dunster:2020:SEB} where new expansions were given, along with error bounds, that are readily computable. A summary of the main results of \cite{Dunster:2017:COA} and \cite{Dunster:2020:SEB} is given in \cref{homogeneous}, and we shall make use of these results for our study of the inhomogeneous case.

For previous results for the inhomogeneous equation see  \cite{Clark:1963:ASN} for the case where the independent variable lies in a finite interval. In \cite[Chap.11, Sect. 12]{Olver:1997:ASF} asymptotic expansions for solutions of (\ref{eq1}) with $u$ and $z$ complex are presented, but the coefficients in these expansions are hard to compute, and a full analysis including error bounds is not pursued. Subsequently, a rigorous study was given by Baldwin \cite{Baldwin:1991:CFF} who derived solutions which are superficially similar to ours, in particular involving three slowly-varying coefficient functions. Her method was based on that by Boyd \cite{Boyd:1987:AEF} who studied the related homogeneous equation, and involved solving certain differential equations for the coefficient functions. The asymptotic expansions and error analyses of both their methods are quite involved, and in general both are harder to compute than ours. We also mention that our conditions on $p(z)$ at infinity are less restrictive.

Our approach is quite different to \cite{Baldwin:1991:CFF}, and results in much simpler expansions and error bounds. A key difference is that in this paper we shall consider $z$ bounded away and close to $z_{0}$ separately. In the former case (\cref{away}) we use, and expand upon, the new theory of \cite{Dunster:2020:LGE}, specialized to the current case of a simple turning point. From this we define three fundamental particular solutions, which are uniquely defined as being slowly varying for large $u$ and $z$ in certain sectors of the $z$ plane. The feature of these asymptotic expansions is that they involve very simple coefficients, as well as explicit and readily computable error bounds. Moreover, we will exploit this in our extension to expansions valid at the turning point, where the Cauchy integral formula plays a prominent role.

In \cref{scorer} we consider Scorer functions, which are solutions of the inhomogeneous Airy equation having a constant forcing term (see (\ref{eq3}) below). These play an important role in our approximations that are valid near the turning point. They have well-known properties that can be found in the literature (see \cite[Sect. 9.12]{NIST:DLMF}), and for details on their computation see \cite{Gil:2000:ONI} and \cite{Gil:2002:F7R}. In \cref{scorer} definitions of numerically satisfactory forms in the complex plane are presented, and new error bounds are derived for their large argument asymptotic expansions which are simpler than those appearing in the literature. We mention that recently Nemes \cite{Nemes:2018:EBF} constructed error bounds for the same expansions, but they are not quite as simple since they were obtained as special case of the more general (and hence complicated) Lommel functions. His bounds can be constructed from \cite[Thm. 4, Eq. (52), p. 531, Appendix A]{Nemes:2018:EBF}.

In \cref{close} asymptotic solutions of (\ref{eq1}) are obtained that are valid in domains containing the turning point, and which involve the above-mentioned Scorer functions. As in \cite{Baldwin:1991:CFF} these asymptotic solutions contain three slowly varying analytic coefficient functions, but here they are very easy to compute. Two of these are precisely the ones appearing in the asymptotic solutions of the homogeneous version of (\ref{eq1}) originally given in \cite{Dunster:2017:COA}. The new third coefficient function is constructed for the particular solutions, and we show how it can be computed via Cauchy's integral formula in conjunction with our new asymptotic expansions given in \cref{away}. An integral part of our analysis are certain connection coefficients linking the three fundamental particular solutions and their homogeneous partners outlined in \cref{homogeneous}. In applications these coefficients are often known explicitly, but if not, we show how they can be asymptotically computed for large $u$, again using Cauchy's integral formula.

In \cref{sec6} we illustrate our new  asymptotic approximations with an application to inhomogeneous Airy equations having polynomial and exponential forcing terms. As a corollary we obtain uniform approximations for indefinite Laplace-type integrals involving the Airy function. We mention that in \cite{Miura:1985:PSO} the case of a polynomial forcing term for the Airy equation was also studied, but not asymptotically with the presence of a large parameter. 

\section{Solutions of homogeneous equation}
\label{homogeneous}

For homogeneous equation let us present the main results from \cite{Dunster:2017:COA} and \cite{Dunster:2020:SEB}. Firstly, we define $Z$ as a domain containing $z_{0}$ and in which $f(z)$ and $g(z)$ are meromorphic, and $p(z)$ is analytic. If $f(z)$ and $g(z)$ have poles at $z=w_{j} \in Z$ ($j=1,2,3,\cdots $) (say), then at $z=w_{j}$

(i) $f(z)$ has a pole of order $m>2$, and $g(z)$ is analytic or has a pole of order less than $\frac{1}{2}m+1$, or

(ii) $f(z)$ and $g(z)$ have a double pole, and 
$\left( z-w_{j}\right) ^{2}g(z) \rightarrow -\frac{1}{4}$ as $z\rightarrow w_{j}$.
We shall call these \textit{admissible poles}.

If $Z$ is unbounded we further assume that $f(z)$ and $g(z)$ can be expanded in convergent series in a neighborhood of $z=\infty$ in $Z$ of the form
\begin{equation}
f(z) ={z}^{m}\sum\limits_{s=0}^{\infty }f_{s}{z}^{-s},\ g\left(
z\right) ={z}^{p}\sum\limits_{s=0}^{\infty }g_{s}{z}^{-s},
\label{fginfinity}
\end{equation}
where $f_{0}\neq 0$, $g_{0}\neq 0$, and either $m$ and $p$ are integers such that $m>-2$ and $p<\frac{1}{2}m-1$, or $m=p=-2$ and $g_{0}=-\frac{1}{4}$. 

The above conditions on $f(z)$ and $g(z)$ at finite or unbounded singularities in $Z$ are sufficient for most applications to special functions. However these are not necessary, and can be relaxed somewhat; see \cite[Chap. 10, Sect. 5]{Olver:1997:ASF}.

The following variables play a central role in all our approximations, and come from the Liouville transformation on the homogeneous form of (\ref{eq1})
\begin{equation}
\xi =\tfrac{2}{3}\zeta^{3/2}= \int_{z_{0}}^{z}f^{1/2}(t) dt.
\label{xi}
\end{equation}
The branch in (\ref{xi}) will be specified in \cref{away}. Note that the turning point $z=z_{0}$ of (\ref{eq1}) is mapped to $\zeta =\xi=0$.

We denote by $\mathbf{Z}$ and $\Xi$ the $\zeta$ and $\xi$ domains, respectively, corresponding to $Z$, and furthermore define the sectors
\begin{equation}
\mathrm{\mathbf{S}}_{j}=\left\{\zeta :\left\vert {\arg \left( {\zeta e^{-2\pi ij/3}}\right) }\right\vert \leq \tfrac{1}{3}\pi
\right\} \ \left( {j=0,\pm 1}\right),
\label{eq6}
\end{equation}
and 
\begin{equation}
\mathrm{\mathbf{T}}_{j}=\left\{\zeta :\left\vert {\arg \left(u^{2/3} \zeta e^{-2\pi ij/3}\right) }\right\vert \leq \tfrac{1}{3}\pi
\right\} \ \left( {j=0,\pm 1}\right).
\label{eq6b}
\end{equation}
which is the sector $\mathbf{S}_{j}$ rotated by an angle $2\arg(u)/3$ in a negative direction. In the $z$ plane the sets corresponding to (\ref{eq6}) and (\ref{eq6b}) are denoted by $S_{j}$ and $T_{j}$ respectively.

For $j=0,\pm 1$ we let $\zeta^{(j)}$ be points at infinity in $\mathbf{T}_{j}\cap \mathbf{Z}$. The corresponding $\xi$ and $z$ points are denoted by $\xi^{(j)}$ and $z^{(j)}$, respectively. These will appear in our error bounds. The points $z^{(j)}$ are either at infinity or at an admissible pole in $Z$, and in either case $\xi^{(j)}$ is unbounded.

For each\ $j=0,\pm 1$\ we define subdomains of $Z$, denoted by $Z_{j}(u)$, which comprise the $z$ point set for which there is a path $\hat{\mathcal{L}}_{j}$ (say) linking $z$ with $z^{(j) }$ in $Z$ and having the properties

(i) $\hat{\mathcal{L}}_{j}$ consists of a finite chain of $R_{2}$ arcs (as defined in \cite[Chap. 5, sec. 3.3]{Olver:1997:ASF}), and

(ii) as $v$ passes along $\hat{\mathcal{L}}_{j}$ from $z^{(j) }$ to $z$, the real part of $u\xi(v)$ varies continuously and is monotonic, where $\xi(v)$ is given by (\ref{xi}) with $z=v$.

Following Olver \cite[Chap. 6, sec. 11]{Olver:1997:ASF} these are called \textit{progressive paths}. For (i) we will call this an $R_{2}$ \textit{path}. The choice of branch cut or cuts for $\xi$ are usually straight lines in the $z$ plane, but sometimes can be chosen to lie on different curves to maximize the regions of validity; see \cite[Chap. 11, Sect. 9.1]{Olver:1997:ASF}

Next we introduce functions that appear in the asymptotic expansions and their error bounds. Here and elsewhere the use of a circumflex (\textasciicircum) is in accord with the notation of \cite{Dunster:2017:COA} and \cite{Dunster:2020:LGE}, and is used to distinguish certain functions and paths that are defined in terms of $z$ rather than $\xi$. 

Let
\begin{equation}
\Phi(z) =\frac{4f(z) {f}^{\prime \prime }(z) -5{f}^{\prime 2}(z) }{16f^{3}(z) }+\frac{g(z)}{f(z)}.
\label{eq5}
\end{equation}
Then we define the set of coefficients 
\begin{equation}
\hat{F}_{1}(z) ={\tfrac{1}{2}}\Phi(z) ,\ \hat{F}_{2}(z) =-{\tfrac{1}{4}}f^{-1/2}(z) {\Phi }^{\prime
}(z),
\label{eq8aa}
\end{equation}
and 
\begin{equation}
\hat{F}_{s+1}(z) =-\tfrac{1}{2}f^{-1/2}(z) \hat{{F}
}_{s}^{\prime }(z) -\tfrac{1}{2}\sum\limits_{j=1}^{s-1}{\hat{F}%
_{j}(z) \hat{F}_{s-j}(z) }\ \left( {s=2,3,4\cdots }
\right).
\label{eq9}
\end{equation}
The odd coefficients appearing in the following asymptotic expansions are given by
\begin{equation}
\hat{E}_{2s+1}(z) =\int {\hat{F}_{2s+1}(z)
f^{1/2}(z) dz}\ \left( {s=0,1,2,\cdots }\right),
\label{eq7aa}
\end{equation}
where the integration constants must be chosen so that each $\left(
z-z_{0}\right) ^{1/2}\hat{E}_{2s+1}(z) $ is meromorphic at the turning point. As shown in \cite{Dunster:2017:COA}, the even ones can be determined without any integration, via the formal expansion
\begin{equation}
\sum\limits_{s=1}^{\infty }\dfrac{\hat{E}_{2s}(z) }{u^{2s}}
\sim -\frac{1}{2}\ln \left\{ 1+\sum\limits_{s=0}^{\infty }\dfrac{\hat{F}_{2s+1}(z) }{u^{2s+2}}\right\} +\sum\limits_{s=1}^{\infty }\dfrac{\alpha_{2s}}{u^{2s}},
\label{even}
\end{equation}
where each ${\alpha}_{2s}$ is arbitrarily chosen. These too are meromorphic at the turning point.

Next define two sequences $\left\{a_{s}\right\} _{s=1}^{\infty}$ and $\left\{\tilde{a}_{s}\right\} _{s=1}^{\infty}$ by $a_{1}=a_{2}=\frac{5}{72}$, $\tilde{a}_{1}=\tilde{a}_{2}=-{\frac{7}{72}}$, with subsequent terms $a_{s}$ and $\tilde{a}_{s}$ ($s=2,3,\cdots $) satisfying the same recursion formula 
\begin{equation}
b_{s+1}=\frac{1}{2}(s+1) b_{s}+\frac{1}{2}
\sum\limits_{j=1}^{s-1}{b_{j}b_{s-j}}.
\label{arec}
\end{equation}
Then let
\begin{equation}
\mathcal{E}_{s}(z) =\hat{E}_{s}(z) +
(-1)^{s}a_{s}s^{-1}\xi^{-s},
\label{eq40}
\end{equation}
and
\begin{equation}
\tilde{\mathcal{E}}_{s}(z) =\hat{E}_{s}(z) 
+(-1)^{s}\tilde{a}_{s}s^{-1}\xi^{-s}.
\label{eq38}
\end{equation}

Let $W(\zeta)=\mathrm{Ai}_{j}(\zeta):=\mathrm{Ai}\left(\zeta e^{-2\pi ij/3}\right)$ ($j=0,\pm 1$) be three numerically satisfactory solutions of the Airy equation $W''=\zeta W$. For large $|\zeta|$ these are characterized as being recessive for $\zeta \in \mathrm{\mathbf{S}}_{j}$ and dominant elsewhere. In particular as $u^{2/3}\zeta \rightarrow \infty$
\begin{equation}
\mathrm{Ai}\left( u^{2/3}\zeta \right) \sim \frac{e^{-u\xi }}{2\pi
^{1/2}u^{1/6}\zeta ^{1/4}}\quad \left(\left\vert \arg\left(u^{2/3}\zeta \right) \right\vert \leq \pi-\delta\right).
\label{Aiinfinity}
\end{equation}

Now from \cite{Dunster:2017:COA} we have three fundamental solutions of the homogeneous equation 
\begin{equation}
\label{odehomog}
d^{2}w/dz^{2}=\left\{u^{2}f(z)+g(z) \right\}w,
\end{equation}
given by
\begin{equation}
\label{eq47}
w_{m,j}(u,z)=\mathrm{Ai}_{j} \left(u^{2/3}\zeta \right)\mathcal{A}_{2m+2}(u,z)+\mathrm{Ai}'_{j} \left(u^{2/3}\zeta \right)\mathcal{B}_{2m+2} 
(u,z)\;(j=0,\pm 1),
\end{equation}
for $m=0,1,2,\cdots$.

For $z$ not too close to the turning point the coefficient functions are computable via the asymptotic expansions
\begin{multline}
\label{eq48}
 \mathcal{A}_{2m+2}(u,z)=\left\{\frac{\zeta }{f(z)} \right\}^{1/4}\exp 
\left\{\sum\limits_{s=1}^m \frac{\tilde{\mathcal{E}}_{2s}(z)}{u^{2s}} 
\right\}\cosh \left\{\sum\limits_{s=0}^m \frac{\tilde{\mathcal{E}}_{2s+1} 
(z)}{u^{2s+1}}  \right\} \\ 
 +\frac{1}{2}\left\{\frac{\zeta }{f(z)} \right\}^{1/4}\tilde{\varepsilon}_{2m+2}(u,z),
\end{multline}
and
\begin{multline}
\label{eq49}
 \mathcal{B}_{2m+2}(u,z)=\frac{1}{u^{1/3}\left\{\zeta f(z) 
\right\}^{1/4}}\exp \left\{\sum\limits_{s=1}^m \frac{{\mathcal{E}}_{2s}(z)}{u^{2s}} \right\}\sinh \left\{\sum\limits_{s=0}^{m} \frac{\mathcal{E}_{2s+1}(z)}{u^{2s+1}}  \right\} \\ 
+\frac{\varepsilon_{2m+2}(u,z)}{2u^{1/3}\left\{\zeta f(z) \right\}^{1/4}}.
\end{multline}

Let $B(z_{0},\delta)=\{z:|z-z_{0}| < \delta\}$ for arbitrary small $\delta>0$. Then, for $z$ lying in each of the three domains $Z_{j}(u)\cap Z_{k}(u) \setminus{B(z_{0},\delta)}$ ($j,k=0,\pm1, \, j<k$), bounds for the error terms $\tilde{\varepsilon}_{2m+2}(u,z)$ and $\varepsilon_{2m+2}(u,z)$ are furnished in \cite{Dunster:2020:SEB}, and they are $\mathcal{O}(u^{-2m-2})$ as $u \rightarrow \infty$ uniformly for $z$ lying in the specified domain. Note that for large $u$ the solution $w_{m,j}(u,z)$ is recessive (exponentially small) in $Z_{j}(u)$ and dominant outside this region.

The error terms in (\ref{eq48}) and (\ref{eq49}) become unbounded as $z \rightarrow z_{0}$. However, $\mathcal{A}_{2m+2}(u,z)$ and $\mathcal{B}_{2m+2}(u,z)$ are analytic at this point. Thus in a bounded neighbourhood of the turning point one can use the above expansions and Cauchy's integral formula to compute these functions. Specifically we have
\begin{multline}
\label{eq50}
 \mathcal{A}_{2m+2}(u,z)=\frac{1}{2\pi i}\oint_{\left| {t-z_{0} } 
\right|=r_{0} } {\exp \left\{\sum\limits_{s=1}^m \frac{\tilde{\mathcal{E}}_{2s}(t)}{u^{2s}}  \right\}} \\ 
 \times \cosh \left\{\sum\limits_{s=0}^m \frac{\tilde{\mathcal{E}}_{2s+1}(t)}{u^{2s+1}}  \right\}\left\{\frac{\zeta(t)}{f(t)} 
\right\}^{1/4}\frac{dt}{t-z}+\frac{1}{2}\tilde{\kappa }_{2m+2}(u,z),
\end{multline}
and
\begin{multline}
\label{eq51}
 \mathcal{B}_{2m+2}(u,z)=\frac{1}{2\pi iu^{1/3}}\oint_{\left| {t-z_{0} } 
\right|=r_{0} } {\exp \left\{\sum\limits_{s=1}^m \frac{{\mathcal{E}}_{2s}
(t)}{u^{2s}}  \right\}} \\ 
 \times \sinh \left\{\sum\limits_{s=0}^{m}\frac{\mathcal{E}_{2s+1}(t)}{u^{2s+1}} \right\}\frac{dt}{\left\{ f(t)\zeta(t) \right\}^{1/4}(t-z)}+\frac{\kappa_{2m+2}(u,z)}{2u^{1/3}},
\end{multline}
for some some suitably chosen $r_0>0$. Again full details and error bounds are given by \cite{Dunster:2020:SEB}. As shown in \cite{Dunster:2017:COA} these integrals are easy to compute to high accuracy. In particular the method overcomes the numerical difficulty of previous methods that involve cancellation of singularities.

We finally note from the connection formula \cite[Eq. 9.2.12]{NIST:DLMF}
\begin{equation}
\label{eq25}
\mathrm{Ai}(z)+e^{-2\pi i/3}\mathrm{Ai}_{1}(z)+e^{2\pi i/3}\mathrm{Ai}_{-1}(z)=0,
\end{equation}
and so from (\ref{eq47}) that
\begin{equation}
\label{eq52}
w_{m,0}(u,z)+e^{-2\pi i/3}w_{m,1}(u,z)+e^{2\pi i/3}w_{m,-1}(u,z)=0.
\end{equation}

\section{Particular solutions bounded away from turning point}
\label{away}

We shall consider the domains $Z_{j}(u)$ and $Z_{k}(u)$ ($j\neq k$) in pairs, and we define the three domains
\begin{equation}
Z^{(j,k)}(u):=Z_{j}(u)\cap Z_{k}(u) \quad (j,k=0,\pm1, \, j<k).
\end{equation}
We shall derive asymptotic expansions for three fundamental particular solutions of (\ref{eq1}), $w^{(j,k)}(u,z)$ ($j,k=0,\pm1$, $j < k$), which are slowly varying functions of $u$ in $Z^{(j,k)}(u)$. 

We assume that both reference points $z^{(j)}$ and $z^{(k)}$ lie in $Z^{(j,k)}(u)$. We take any branch of $\xi(z)$, provided it is a continuous function in this domain. Hence the unbounded values $\Re(u\xi^{(j)})$ and $\Re(u\xi^{(k)})$ are of opposite signs.  This means for each $z \in Z^{(j,k)}(u)$ there exists at least one progressive path in this domain, $\hat{\mathcal{L}}^{(j,k)}(z)$ say, connecting these two reference points and which contains $z$. Such a path depends on the argument of $u$ too, but for ease of notation we suppress this dependence.

Let $\Xi^{(j,k)}(u)$ and $\mathcal{L}^{(j,k)}(\xi)$ be the sets of points in the $\xi$ plane corresponding to $Z^{(j,k)}(u)$ and $\hat{\mathcal{L}}^{(j,k)}(z)$, respectively. Label the suffices so that $\Re(u\xi^{(j)})=-\infty $ and $\Re(u\xi^{(k)})=+\infty $, and denote
\begin{equation}
\psi(\xi)=\Phi(z(\xi)).
\label{phixi}
\end{equation}
Then, in terms of the variable $\xi$, the following result comes from \cite[Thm. 4.1]{Dunster:2020:LGE}.

\begin{theorem}
Let $\xi \in \Xi^{(j,k)}(u) \setminus{\{0\}} $, and $u $ be sufficiently large so that
$|u|>$ \linebreak $\frac{1}{2}\int_{\mathcal{L}^{(j,k)}(\xi) }{\left\vert {\psi(t) dt}\right\vert }$. Then there exists a unique particular solution of (\ref{eq1}) of the form 
\begin{equation}
G^{(j,k)}(u,\xi) =\frac{1}{f^{1/4}(z)}\left[\frac{1}{u^{2}}\sum
\limits_{s=0}^{n-1}\frac{G_{s}(\xi)}{u^{2s}}
+\varepsilon_{n}^{(j,k)}(u,\xi) \right],
\label{LG67}
\end{equation}
where
\begin{equation}
G_{0}(\xi) =-f^{-3/4}(z) p(z),
\label{LG63}
\end{equation}
\begin{equation}
G_{s+1}(\xi) =G_{s}^{\prime \prime }(\xi) -\psi
(\xi) G_{s}(\xi) \ (s=0,1,2,\cdots),
\label{LG68}
\end{equation}
and
\begin{multline}
\left\vert \varepsilon _{n}^{(j,k)}(u,\xi) \right\vert \leq \dfrac{1}{|u| ^{2n+2}}\left\{\left\vert G_{n}(\xi) \right\vert +\dfrac{1}{2}\int_{\mathcal{L}^{(j,k)}(\xi) }{\left\vert {{G}_{n}'(t) dt}\right\vert }\right\} \\
+\dfrac{L_{n}^{(j,k)}(\xi) }{2|u| ^{2n+3}}\left\{ {1-\dfrac{1}{2|u| }\int_{\mathcal{L}^{(j,k)}(\xi) }{\left\vert {\psi(t) dt}\right\vert }}\right\} ^{-1}\int_{\mathcal{L}^{(j,k)}(\xi) }\left\vert {\psi(t) dt}\right\vert ,
\label{LG82}
\end{multline}
in which
\begin{equation}
L_{n}^{(j,k)}(\xi) =\sup_{t\in \mathcal{L}^{(j,k)}(\xi) }\left\vert G_{n}(t) \right\vert +\frac{1}{2}\int_{\mathcal{L}^{(j,k)}(\xi) } \left\vert G_{n}'(t) dt \right\vert .
\label{LG80}
\end{equation}
In the bounds the integrals and supremum are assumed to exist.
\label{thm4}
\end{theorem}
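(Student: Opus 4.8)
The plan is to reduce (\ref{eq1}) to a canonical inhomogeneous equation in the Liouville variable $\xi$, construct the coefficients $G_{s}$ by formal matching, and then control the remainder through a Volterra-type integral equation whose contraction constant is governed by the stated bound on $\int_{\mathcal{L}^{(j,k)}(\xi)}|\psi|$. First I would put $W(\xi)=f^{1/4}(z)w(z)$ with $\xi$ as in (\ref{xi}), so that $G^{(j,k)}=f^{-1/4}W$. Under this substitution the homogeneous operator transforms into $d^{2}/d\xi^{2}-u^{2}-\psi(\xi)$, with $\psi$ as in (\ref{phixi}), while the forcing $p(z)$ becomes $f^{-3/4}(z)p(z)=-G_{0}(\xi)$; thus (\ref{eq1}) is equivalent to
\[
\frac{d^{2}W}{d\xi^{2}}-\left\{u^{2}+\psi(\xi)\right\}W=-G_{0}(\xi).
\]
Inserting the formal expansion $W\sim u^{-2}\sum_{s\geq 0}G_{s}(\xi)u^{-2s}$ and equating coefficients of each power $u^{-2s-2}$ then yields precisely the recurrence (\ref{LG68}) together with (\ref{LG63}), which fixes the $G_{s}$.

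Next I would truncate, writing $W=u^{-2}\sum_{s=0}^{n-1}G_{s}u^{-2s}+\varepsilon_{n}$. Substituting the partial sum and telescoping by means of (\ref{LG68}) leaves a single residual term, so that the remainder satisfies the inhomogeneous equation
\[
\frac{d^{2}\varepsilon_{n}}{d\xi^{2}}-\left\{u^{2}+\psi(\xi)\right\}\varepsilon_{n}=-u^{-2n}G_{n}(\xi).
\]
Treating $\psi\varepsilon_{n}$ as part of the right-hand side and applying variation of parameters with the exponential solutions $e^{\pm u\xi}$ of $y''=u^{2}y$, I would recast this as the integral equation
\[
\varepsilon_{n}(\xi)=\frac{1}{2u}\int_{\mathcal{L}^{(j,k)}(\xi)}\mathcal{K}(\xi,t)\left\{u^{-2n}G_{n}(t)-\psi(t)\varepsilon_{n}(t)\right\}\,dt,
\]
where $\mathcal{K}$ is built from $e^{\pm u(\xi-t)}$ and, crucially, is chosen to select the recessive exponential at each end of $\mathcal{L}^{(j,k)}(\xi)$. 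Since $\Re(u\xi)$ varies monotonically along the progressive path and the two reference points carry $\Re(u\xi^{(j)})=-\infty$ and $\Re(u\xi^{(k)})=+\infty$, this choice makes $|\mathcal{K}(\xi,t)|\leq 1$ throughout and renders the particular solution unique: the boundary behaviour that forces out any admixture of the growing homogeneous solution is exactly the slowly-varying normalisation claimed in the statement.

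Finally I would establish convergence and the bound (\ref{LG82}) by successive approximation. The linear operator $\phi\mapsto(2u)^{-1}\int\mathcal{K}(\xi,t)\psi(t)\phi(t)\,dt$ has norm at most $\tfrac{1}{2|u|}\int_{\mathcal{L}^{(j,k)}(\xi)}|\psi(t)\,dt|$, which is strictly less than $1$ precisely under the hypothesis $|u|>\tfrac{1}{2}\int_{\mathcal{L}^{(j,k)}(\xi)}|\psi(t)\,dt|$; hence the Neumann series converges, giving both existence and uniqueness of $\varepsilon_{n}$. To extract the sharp leading order I would integrate the $G_{n}$-term by parts once on each half of the contour, the boundary contributions at the reference points vanishing; this produces the $u^{-2n-2}$ group $|G_{n}(\xi)|+\tfrac{1}{2}\int_{\mathcal{L}^{(j,k)}(\xi)}|G_{n}'(t)\,dt|$ of the first brace of (\ref{LG82}), while the remaining action of the contraction, applied to the supremum bound $L_{n}^{(j,k)}(\xi)$ of (\ref{LG80}) and summed geometrically, supplies the $u^{-2n-3}$ term carrying the factor $(1-\tfrac{1}{2|u|}\int|\psi|)^{-1}\int|\psi|$.

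The main obstacle, and the place where the specialisation of \cite[Thm.~4.1]{Dunster:2020:LGE} must be handled with care, is the simultaneous design of the kernel $\mathcal{K}$: it must respect the two-ended recessive boundary conditions so as to guarantee uniqueness, remain bounded by unity along every admissible progressive path, and still permit the single integration by parts that lowers the apparent order from $u^{-2n-1}$ to the stated $u^{-2n-2}$. Verifying that the requisite integrals and suprema exist, and that the boundary terms genuinely vanish at $\xi^{(j)}$ and $\xi^{(k)}$, is where the conditions on $f$, $g$ and $p$ at the singularities of $Z$ enter.
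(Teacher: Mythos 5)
Your proposal is correct and follows essentially the same route as the source the paper relies on: the paper does not reprove \cref{thm4} but cites \cite[Thm. 4.1]{Dunster:2020:LGE}, and the machinery it does display --- the two-ended variation-of-parameters integral equation (\ref{LG90}) with the recessive exponential selected on each half of the progressive path, the single integration by parts producing the $|G_{n}(\xi)|+\tfrac{1}{2}\int|G_{n}'|$ group, and the contraction estimate (\ref{LG96}) with norm $\tfrac{1}{2|u|}\int|\psi|$ --- matches your outline step for step. The one point worth tightening is that uniqueness follows most cleanly from the unique solvability of the integral equation itself (any two solutions of the stated form differ by a homogeneous solution bounded at both $\xi^{(j)}$ and $\xi^{(k)}$, hence zero), which is the normalisation you correctly identify.
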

\begin{remark}
The integrals certainly diverge at the turning point $\xi=0$. Therefore these expansions (and subsequent ones in this section) should not be used near the turning point, nor indeed near the boundaries of $\Xi^{(j,k)}(u)$. Instead the expansions of \cref{close} will be applicable, in conjunction with connection formulas that are provided therein.

We also mention that in order to obtain sharp bounds for large $\xi$ the paths of integration, here and throughout, should be chosen whenever possible so that $t^{-1}=\mathcal{O}(\xi ^{-1})$ for $t$ lying on the path in question.
\end{remark}

We shall convert this theorem to the variable $z$, but before doing so we present some extensions. Firstly, we give bounds on the derivative of the error term.

\begin{theorem}
Under the conditions of \cref{thm4}
\begin{multline}
\left\vert \frac{\partial \varepsilon _{n}^{(j,k)}(u,\xi )}{\partial \xi }
\right\vert \leq \frac{1}{\left\vert u^{2n+2}\right\vert }\left[ \left\vert
G_{n}^{\prime }(\xi)\right\vert +\frac{1}{2}\int_{\mathcal{L}^{(j,k)}(\xi)}
{\ \left\vert {G}_{n}^{\prime \prime }{(t)dt}\right\vert }\right. \\
\left. +\frac{1}{2}L_{n}^{(j,k)}(\xi)\left\{ {1-\frac{1}{2\left\vert
u\right\vert } \int_{\mathcal{L}^{(j,k)}(\xi)}{\left\vert {\psi(t)dt}
\right\vert }}\right\} ^{-1}\int_{\mathcal{L}^{(j,k)}(\xi)}\left\vert {\psi
(t) dt}\right\vert \right].
\label{LG100}
\end{multline}
\label{thmLGderivative} 
\label{thm5}
\end{theorem}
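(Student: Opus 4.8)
The plan is to differentiate, with respect to $\xi$, the Volterra integral equation satisfied by $\varepsilon_n^{(j,k)}$ in the construction of \cref{thm4}, and then rerun its estimates with one extra integration by parts. Recall that the error term solves the inhomogeneous comparison equation $\partial^2\varepsilon_n^{(j,k)}/\partial\xi^2-\{u^2+\psi(\xi)\}\varepsilon_n^{(j,k)}=-u^{-2n}G_n(\xi)$, which by variation of parameters against the recessive solutions $e^{\pm u\xi}$ is recast as
\begin{equation}
\varepsilon_n^{(j,k)}(u,\xi)=\int_{\mathcal{L}^{(j,k)}(\xi)}\mathcal{G}(u,\xi,t)\left\{\psi(t)\varepsilon_n^{(j,k)}(u,t)-u^{-2n}G_n(t)\right\}dt,
\label{IEpf}
\end{equation}
where $\mathcal{G}$ is the two-point Green's kernel built from the reference points $\xi^{(j)}$ and $\xi^{(k)}$. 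Along the progressive path the monotonicity of $\Re(u\xi)$ supplies the envelope bounds $|\mathcal{G}(u,\xi,t)|\le\tfrac{1}{2|u|}$ and, crucially, $|\partial\mathcal{G}/\partial\xi|\le\tfrac12$, and $\mathcal{G}$ is continuous across the diagonal $t=\xi$ where its two pieces meet.

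First I would differentiate \eqref{IEpf}. Because $\mathcal{G}$ is continuous at the moving split point $t=\xi$, the two Leibniz boundary contributions produced there cancel, leaving
\begin{equation}
\frac{\partial\varepsilon_n^{(j,k)}}{\partial\xi}=\int_{\mathcal{L}^{(j,k)}(\xi)}\frac{\partial\mathcal{G}}{\partial\xi}(u,\xi,t)\left\{\psi(t)\varepsilon_n^{(j,k)}(u,t)-u^{-2n}G_n(t)\right\}dt.
\label{IEderpf}
\end{equation}
The coupling term is then immediate: combining $|\partial\mathcal{G}/\partial\xi|\le\tfrac12$ with the supremum estimate $\sup_{t}|\varepsilon_n^{(j,k)}(u,t)|\le L_n^{(j,k)}(\xi)|u|^{-2n-2}\left\{1-\tfrac{1}{2|u|}\int_{\mathcal{L}^{(j,k)}(\xi)}|\psi(t)\,dt|\right\}^{-1}$, which the successive-approximation argument behind \cref{thm4} provides, reproduces exactly the $\psi$-term of \eqref{LG100}.

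The source term is where the work lies, since the factor $u^{-1}$ lost in passing from $\mathcal{G}$ to $\partial\mathcal{G}/\partial\xi$ must be regained. I would integrate $\int\partial_\xi\mathcal{G}\,G_n\,dt$ by parts twice. The first integration produces endpoint terms proportional to $G_n$: those at $\xi^{(j)}$ and $\xi^{(k)}$ vanish because the relevant exponential is recessive there and the existence hypotheses of \cref{thm4} keep the boundary values finite, while the two contributions at $t=\xi$ exactly cancel, leaving a kernel integral against $G_n'$. A second integration by parts then delivers the algebraic term $-u^{-2}G_n'(\xi)$ (the two diagonal half-contributions now summing to coefficient one) together with a $\tfrac12u^{-2}$ multiple of a $G_n''$ integral; restoring the prefactor $u^{-2n}$ and bounding the residual exponentials by unity yields $|u|^{-2n-2}\{|G_n'(\xi)|+\tfrac12\int_{\mathcal{L}^{(j,k)}(\xi)}|G_n''(t)\,dt|\}$, the first bracketed term of \eqref{LG100}. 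Summing the two contributions finishes the proof.

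The step I expect to be the main obstacle is the bookkeeping of these boundary terms: the cancellation of the intermediate $G_n(\xi)$ contributions at the diagonal is precisely what prevents a spurious term of the larger order $|u|^{-2n-1}$ from surviving, and the vanishing of the reference-point contributions must be tied carefully to the recessive behaviour of the comparison exponentials and to the convergence of the integrals assumed in \cref{thm4}. Everything else is a faithful transcription of the estimates proving \cref{thm4}, now carrying one additional derivative on $G_n$.
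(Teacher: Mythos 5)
Your proposal is correct and follows essentially the same route as the paper: both start from the variation-of-parameters integral equation (\ref{LG90}), integrate by parts twice to trade $G_{n}$ for $G_{n}''$ and gain the needed factor $u^{-2}$, differentiate (with the diagonal boundary terms cancelling), and then invoke path monotonicity together with the bound (\ref{LG96}) on $\int|\psi\,\varepsilon_{n}^{(j,k)}|$. The only difference is the order of operations --- the paper integrates by parts first to obtain (\ref{LG92}) and then differentiates, whereas you differentiate first and then integrate by parts --- which leads to the identical expression (\ref{LG91}) and the same bound.
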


\begin{proof} 
Using variation of parameters \cite[Eq. (4.12)]{Dunster:2020:LGE} gives the integral equation
\begin{multline}
\varepsilon _{n}^{(j,k)}(u,\xi )=\dfrac{e^{-u\xi }}{2u}\int_{\xi^{(j)}}^{\xi
}{\ e^{ut}\left\{ {\dfrac{G_{n}(t)}{u^{2n}}-\psi(t)\varepsilon
_{n}^{(j,k)}(u,t)} \right\} dt} \\
+\dfrac{e^{u\xi }}{2u}\int_{\xi }^{\xi^{(k)}}e^{-ut}\left\{ {\dfrac{ G_{n}(t)
}{u^{2n}}-\psi(t) \varepsilon _{n}^{(j,k)}(u,t)}\right\} dt.  
\label{LG90}
\end{multline}
The paths of integration in both integrals are chosen to coincide with the
segments of $\mathcal{L}^{(j,k)}(\xi)$ from $\xi^{(j)}$ to $\xi$, and $\xi$ to $\xi^{(k)}$, respectively. We then integrate by parts twice the terms involving $u^{-2n}{e^{\pm ut}}
G_{n}(t)$ and we get 
\begin{multline}
\varepsilon _{n}^{(j,k)}(u,\xi )=\dfrac{G_{n}(\xi)}{u^{2n+2}}+\dfrac{
e^{-u\xi }}{2u} \int_{\xi^{(j)}}^{\xi }{e^{ut}\left\{ {\dfrac{{G}
_{n}^{\prime \prime }(t)}{u^{2n+2}}-\psi(t)\varepsilon _{n}^{(j,k)}(u,t)}
\right\} dt} \\
+\dfrac{e^{u\xi }}{2u}\int_{\xi }^ {\xi^{(k)}}e^{-ut}\left\{ {\dfrac{{G}
_{n}^{\prime \prime }(t)}{u^{2n+2}}-\psi(t)\varepsilon _{n}^{(j,k)}(u,t)}
\right\} dt.
\label{LG92}
\end{multline}
Hence on differentiation
\begin{multline}
\frac{\partial \varepsilon _{n}^{(j,k)}(u,\xi )}{\partial \xi }=\dfrac{
G_{n}^{\prime }(\xi)}{u^{2n+2}}-\dfrac{e^{-u\xi }}{2} \int_{\xi^{(j)}}^{\xi
}{\ e^{ut}\left\{ {\dfrac{{G}_{n}^{\prime \prime }(t)}{u^{2n+2}}-\psi
(t)\varepsilon _{n}^{(j,k)}(u,t)}\right\} dt} \\
+\dfrac{e^{u\xi }}{2}\int_{\xi } ^{\xi^{(k)}}e^{-ut}\left\{ {\dfrac{{G}
_{n}^{\prime \prime }(t)}{u^{2n+2}}-\psi(t)\varepsilon _{n}^{(j,k)}(u,t)}
\right\} dt.
\label{LG91}
\end{multline}
Thus by the monotonicity properties of the paths of integration we deduce that
\begin{multline}
\left\vert \frac{\partial \varepsilon _{n}^{(j,k)}(u,\xi )}{\partial \xi }
\right\vert \leq \left\vert \dfrac{G_{n}^{\prime }(\xi)}{u^{2n+2}}
\right\vert \\
+\frac{1}{2|u| ^{2n+2}}\int_{\mathcal{L}^{(j,k)}(\xi)}{
\left\vert {G}_{n}^{\prime \prime }{(t)dt}\right\vert }+\frac{1}{2}\int_{ \mathcal{L}^{(j,k}(\xi)}\left\vert \psi(t)\varepsilon _{n}^{(j,k)}(u,t)dt.
\right\vert  
\label{LG94}
\end{multline}
Now under the assumption that $|u|$ is sufficiently
large so that $\int_{\mathcal{L}^{(j,k)}(\xi)}{\left\vert {\psi(t)dt}
\right\vert } <2|u|$ we have from \cite[Eqs. (4.18) and (4.21)]{Dunster:2020:LGE}
\begin{multline}
\int_{\mathcal{L}^{(j,k)}(\xi)}{\left\vert {\psi(t)\varepsilon
_{n}^{(j,k)}(u,t)dt} \right\vert } \\
\leq \frac{L_{n}^{(j,k)}(\xi)}{|u| ^{2n+2}}\left\{ {1-
\frac{1}{2|u|}\int_{\mathcal{L}^{(j,k)}(\xi)}{
\left\vert {\ \psi(t)dt}\right\vert }}\right\} ^{-1}\int_{\mathcal{L}^{(j,k)}(\xi)}\left\vert {\ \psi(t) dt}\right\vert,
\label{LG96}
\end{multline}
and the result follows. 
\end{proof}

\begin{remark}
The proof of the bound (\ref{LG96}) uses the supremum of $|\varepsilon_{n}^{(j,k)}(u,t)|$ for $t\in \mathcal{L}^{(j,k)}(\xi)$ (see \cite[Eq. (4.17)]{Dunster:2020:LGE}). The existence of this supremum can be established from a suitably modified version of (\ref{LG90}) and \cite[Chap. 6, Thm. 10.1]{Olver:1997:ASF}.
\end{remark}

In the event the integrals involving the coefficients in the above error bounds diverge at one or both of the endpoints, for example if they are of exponential type $c$, the asymptotic expansions are still typically valid. As a simple example consider the equation
\begin{equation}
    \label{simpleODE}
    d^{2}w/d\xi^{2}-u^{2}w=e^{\xi}.
\end{equation}
The homogeneous solution has solutions $e^{\pm u \xi}$, and then by variation of parameters, or the method of undetermined coefficients, we obtain the slowly varying (in $u$) particular solution
\begin{equation}
    \label{psoln}
    w(\xi)=-\frac{e^{\xi}}{u^{2}-1}=
    -\frac{1}{u^{2}}\sum_{s=0}^{\infty}
    \frac{e^{\xi}}{u^{2s}} \quad (|u|>1),
\end{equation}
in accord with the asymptotic expansion given by \cref{thm4}.

To accommodate these situations it is often possible to modify \cref{thm4} and \cref{thm5}. For example, the former can be generalised in a straightforward manner as follows. Firstly, for fixed real $c$ let $\mathcal{L}_{c}^{(j,k)}(\xi)$ be an $R_{2}$ path containing a point $\xi \in \Xi\setminus{\{0\}}$ and as $t$ passes along the path from $\xi^{(j)}$ to $\xi^{(k)}$, the real parts of both $(u + c)t$ and $(u - c)t$ vary continuously and are monotonic. We denote $\Xi_{c}^{(j,k)}(u)$ to be the set of all such paths.

Before presenting our modified solutions, we note in passing the following.
\begin{proposition}
We have $\Xi_{c}^{(j,k)}(u) \subset \Xi^{(j,k)}(u)$.
\end{proposition}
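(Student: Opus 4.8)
The plan is to show that the contour already furnished by membership in $\Xi_{c}^{(j,k)}(u)$ is itself a progressive path for $u\xi$, so that the inclusion holds with no modification of the path. Thus I would take a point $\xi$ of $\Xi_{c}^{(j,k)}(u)$, i.e.\ one admitting an admissible path $\mathcal{L}_{c}^{(j,k)}(\xi)$ that runs from $\xi^{(j)}$ to $\xi^{(k)}$ through $\xi$ and along which both $\Re((u+c)t)$ and $\Re((u-c)t)$ vary continuously and monotonically. Since $\mathcal{L}_{c}^{(j,k)}(\xi)$ is by definition an $R_{2}$ path, property (i) of a progressive path is inherited verbatim, and the whole task collapses to verifying property (ii): that $\Re(ut)$ is monotonic along $\mathcal{L}_{c}^{(j,k)}(\xi)$.

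The engine is the elementary identity, valid because $c$ is real,
\[
\Re(ut)=\tfrac{1}{2}\left[\Re((u+c)t)+\Re((u-c)t)\right],
\]
which exhibits $\Re(ut)$ as the average of the two given monotonic functions. An average of two functions that are monotonic in the \emph{same} direction is monotonic in that direction, so it suffices to show that $\Re((u+c)t)$ and $\Re((u-c)t)$ are \emph{both} increasing as $t$ traverses $\mathcal{L}_{c}^{(j,k)}(\xi)$ from $\xi^{(j)}$ to $\xi^{(k)}$. I would read off this common direction from the endpoint behaviour.

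The main obstacle, and the only genuinely delicate point, is precisely this endpoint analysis: the average of two oppositely monotonic functions need not be monotonic, so I must rule out the case in which one summand increases while the other decreases. Here the hypothesis that $|u|$ is large is decisive. Writing $\Re((u\pm c)t)=\Re(ut)\pm c\Re(t)$ with $|c\Re(t)|\le |c|\,|t|$, and noting that at an interior point at infinity of the sector $\mathbf{T}_{j}$ one has $\Re(ut)\sim -\gamma|t|$ with $\gamma=|u|\cos\theta$ bounded away from zero, once $|u|$ exceeds the fixed quantity $|c|/\cos\theta$ the perturbation $\pm c\Re(t)$ cannot reverse the sign of the divergence; the same reasoning applies at $\xi^{(k)}$. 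Hence $\Re((u\pm c)\xi^{(j)})=-\infty$ and $\Re((u\pm c)\xi^{(k)})=+\infty$ for both signs, so each of $\Re((u\pm c)t)$ runs monotonically from $-\infty$ to $+\infty$ and is therefore increasing. The displayed identity then makes $\Re(ut)$ monotonically increasing, so $\mathcal{L}_{c}^{(j,k)}(\xi)$ is a progressive path through $\xi$ joining $\xi^{(j)}$ and $\xi^{(k)}$; this is exactly the condition defining $\Xi^{(j,k)}(u)$, whence $\xi\in\Xi^{(j,k)}(u)$. As $\xi$ was arbitrary, $\Xi_{c}^{(j,k)}(u)\subset\Xi^{(j,k)}(u)$.
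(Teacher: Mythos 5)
Your proof is correct and rests on the same algebraic core as the paper's: the identity $\Re(ut)=\tfrac{1}{2}\left[\Re((u+c)t)+\Re((u-c)t)\right]$ is just the averaged form of the paper's pair of inequalities $\Re(ut_{1})-\Re(ut_{2})\leq \pm c\,\Re(t_{2}-t_{1})$, one of whose right-hand sides must be nonpositive. The only difference is that you additionally verify, via the endpoint behaviour and the largeness of $|u|$ relative to $|c|$, that both auxiliary functions are monotonic in the \emph{same} direction --- a point the paper's definition leaves implicit and its proof silently assumes when it orders $t_{1},t_{2}$ so that both inequalities hold simultaneously.
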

\begin{proof}
Let $\xi \in \Xi_{c}^{(j,k)}(u)$ and $t_{1}$ and $t_{2}$ be any two points on $\mathcal{L}_{c}^{(j,k)}(\xi)$ satisfying $\Re\{(u\pm c)t_{1}\}\leq \Re\{(u\pm c)t_{2}\}$. Thus
\begin{equation}
\label{LG83a}
\Re(u t_{1}) - \Re(u t_{2})
\leq \pm c\Re(t_{2}-t_{1}).
\end{equation}
Since the RHS must have one of its values as negative or zero we deduce that $\Re(u t_{1}) \leq \Re(u t_{2})$. Hence $\mathcal{L}_{c}^{(j,k)}(\xi)$ satisfies the monotonicity condition of $\mathcal{L}^{(j,k)}(\xi)$, and therefore $\xi \in \Xi^{(j,k)}(u)$.
\end{proof}

\begin{theorem}
For fixed $c \in \mathbb{R}$ let $\xi \in \Xi_{c}^{(j,k)}(u) \setminus{\{0\}}$, and for $t \in \mathcal{L}_{c}^{(j,k)}(\xi)$ suppose $e^{-c t}G_{n}(t)=\mathcal{O}(1)$ and $\{ e^{-ct}G_{n}(t)\}^{\prime} =\mathcal{O}(t^{-\sigma})$ where $\sigma >1$. Then for
\begin{equation}
\max\{|c|,\tfrac{1}{2}\int_{\mathcal{L}_{c}^{(j,k)}(\xi)}{\left\vert{\psi(t)dt}\right\vert}\}<|u|<\infty,
\end{equation}
there exists a unique particular solution of (\ref{eq1}) 
of the form (\ref{LG67}), where
\begin{multline}
\left\vert \varepsilon_{n}^{(j,k)}(u,\xi )\right\vert \leq
\frac{1}{|u|^{2n+2}} \left\{\left\vert \dfrac{u^{2}
G_{n}(\xi)}{u^{2}-c^{2}}\right\vert 
+\frac{\left|u e^{c\xi }\right|}{2(|u|-|c|)}\int_{
\mathcal{L}_{c}^{(j,k)}(\xi)}{\left\vert {\left\{ {e^{-ct}G}_{n}(t)\right\}
^{\prime }dt}\right\vert }\right\}  \\
 +\dfrac{
\left|e^{c\xi }\right|L_{c,n}^{(j,k)}(u,\xi )}{2|u|^{2n+3}}\left\{ {1-\dfrac{1}{
2|u| }\int_{\mathcal{L}_{c}^{(j,k)}(\xi)}{
\left\vert {\psi(t)dt}\right\vert }}\right\} ^{-1}\int_{\mathcal{L}_{c}
^{(j,k)}(\xi)}\left\vert {\psi(t)dt}\right\vert,
\label{LG83}
\end{multline}
with 
\begin{multline}
L_{c,n}^{(j,k)}(u,\xi )=\left\vert \dfrac{u^{2}}{u^{2}-c^{2}}
\right\vert \sup_{t\in \mathcal{L}_{c}^{(j,k)}(\xi)}\left\vert {e^{-ct}}
G_{n}(t)\right\vert \\ +\frac{|u|}{2(|u|-|c|)}\int_{\mathcal{L}_{c}
^{(j,k)}(\xi)}{\left\vert \left\{ {e^{-ct}G}_{n}(t)\right\}^{\prime}dt \right\vert}.
\label{LG77}
\end{multline}
\label{thmLG2}
\end{theorem}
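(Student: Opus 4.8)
The plan is to retrace the proof of \cref{thm5}, starting from the variation-of-parameters integral equation (\ref{LG90}), but to absorb the exponential growth of $G_{n}$ by the factorisation $G_{n}(t)=e^{ct}h(t)$, where $h(t):=e^{-ct}G_{n}(t)$ satisfies $h(t)=\mathcal{O}(1)$ and $h'(t)=\mathcal{O}(t^{-\sigma})$ with $\sigma>1$ by hypothesis. Under this substitution the two kernels in (\ref{LG90}) become $e^{-u\xi}e^{ut}G_{n}(t)=e^{-u\xi}e^{(u+c)t}h(t)$ on the segment from $\xi^{(j)}$ to $\xi$, and $e^{u\xi}e^{-ut}G_{n}(t)=e^{u\xi}e^{-(u-c)t}h(t)$ on the segment from $\xi$ to $\xi^{(k)}$. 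The natural exponentials are therefore $e^{(u+c)t}$ and $e^{-(u-c)t}$, which is exactly why $\mathcal{L}_{c}^{(j,k)}(\xi)$ is required to make both $\Re\{(u+c)t\}$ and $\Re\{(u-c)t\}$ monotonic.

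Next I would integrate by parts once in each integral, using the antiderivatives $e^{(u+c)t}/(u+c)$ and $-e^{-(u-c)t}/(u-c)$ and differentiating $h$. Along $\mathcal{L}_{c}^{(j,k)}(\xi)$ one has $\Re\{(u+c)t\}\to-\infty$ as $t\to\xi^{(j)}$ and $\Re\{(u-c)t\}\to+\infty$ as $t\to\xi^{(k)}$, so the corresponding exponentials decay while $h$ remains bounded; hence the boundary contributions at the unbounded endpoints vanish. The two boundary terms at the common endpoint $\xi$ evaluate to $G_{n}(\xi)/(u+c)$ and $G_{n}(\xi)/(u-c)$ (the residual exponentials $e^{-u\xi}e^{(u+c)\xi}e^{-c\xi}$ and $e^{u\xi}e^{-(u-c)\xi}e^{-c\xi}$ both reduce to unity), which combine as
\[
\frac{G_{n}(\xi)}{u^{2n}}\cdot\frac{1}{2u}\left(\frac{1}{u+c}+\frac{1}{u-c}\right)
=\frac{1}{u^{2n+2}}\cdot\frac{u^{2}G_{n}(\xi)}{u^{2}-c^{2}},
\]
giving the leading term of (\ref{LG83}). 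What remains is an integral equation for $\varepsilon_{n}^{(j,k)}$ whose forcing involves $\int e^{(u\pm c)t}h'(t)\,dt$ and whose kernel is again $\psi(t)\varepsilon_{n}^{(j,k)}(u,t)$.

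To extract the bound I would take absolute values. On the first segment $|e^{(u+c)t}|\le|e^{(u+c)\xi}|$, so $|e^{-u\xi}e^{(u+c)t}|\le|e^{c\xi}|$, and symmetrically $|e^{u\xi}e^{-(u-c)t}|\le|e^{c\xi}|$ on the second; together with $|u\pm c|\ge|u|-|c|$ and the amalgamation of the two $h'$-integrals over the whole of $\mathcal{L}_{c}^{(j,k)}(\xi)$, this reproduces the second term of (\ref{LG83}). For the $\psi\varepsilon_{n}^{(j,k)}$ contribution a successive-approximation estimate in the manner of (\ref{LG96}), via \cite[Chap. 6, Thm. 10.1]{Olver:1997:ASF}, applies, with the quantity $L_{n}^{(j,k)}$ of (\ref{LG80}) replaced by $L_{c,n}^{(j,k)}$ of (\ref{LG77}): its first summand bounds the factored leading part $|u^{2}/(u^{2}-c^{2})|\sup|h|$ and its second the $h'$-integral part, precisely the two non-$\psi$ ingredients of $\varepsilon_{n}^{(j,k)}$. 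This yields the final term of (\ref{LG83}). Existence and uniqueness of the slowly varying solution then follow as in \cref{thm4}, the hypothesis $\tfrac12\int|\psi|<|u|$ supplying the contraction and $|u|>|c|$ keeping $u^{2}-c^{2}$ and $|u|-|c|$ positive.

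The main obstacle is the endpoint analysis: justifying simultaneously that the boundary terms at $\xi^{(j)},\xi^{(k)}$ vanish and that the resulting $h'$-integrals converge. This is exactly where all three hypotheses must act in concert — the double monotonicity of $(u\pm c)t$ forces exponential decay of the kernels at the unbounded endpoints, the boundedness $h=\mathcal{O}(1)$ annihilates the boundary contributions there, and $\sigma>1$ secures the absolute convergence of $\int|h'|$ at infinity. A secondary technical point is confirming that the modified kernel still produces a contraction, so that Olver's theorem delivers both the a priori supremum of $|\varepsilon_{n}^{(j,k)}|$ on the path (as noted in the remark following \cref{thm5}) and the (\ref{LG96})-analogue with $L_{c,n}^{(j,k)}$; this I expect to be routine once the endpoint behaviour is settled.
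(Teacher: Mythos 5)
Your proposal is correct and follows essentially the same route as the paper: the paper makes your factorisation $G_{n}(t)=e^{ct}h(t)$ global by substituting $\varepsilon_{n}^{(j,k)}(u,\xi)=e^{c\xi}\varepsilon_{c,n}^{(j,k)}(u,\xi)$ into (\ref{LG90}) at the outset, then integrates by parts once (producing the same $u^{2}/(u^{2}-c^{2})$ boundary term you compute) and runs the successive-approximation bound on the scaled error with $L_{c,n}^{(j,k)}$, exactly as you describe. The one step worth making explicit is that the contraction/supremum argument must be applied to $e^{-c\xi}\varepsilon_{n}^{(j,k)}(u,\xi)$ rather than to $\varepsilon_{n}^{(j,k)}(u,\xi)$ itself, whose supremum along the path need not be finite when $c\neq 0$; your use of $\sup\vert e^{-ct}G_{n}(t)\vert$ in $L_{c,n}^{(j,k)}$ and the $\vert e^{c\xi}\vert$ prefactor in the final term show you have this in hand, and the paper's explicit substitution is precisely what makes that step airtight.
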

\begin{proof}
In (\ref{LG90}) we choose paths of integration in both integrals to lie on $\mathcal{L}_{c}^{(j,k)}(\xi)$. Let 
\begin{equation}
\varepsilon_{n}^{(j,k)}(u,\xi)=e^{c\xi}\varepsilon_{c,n}^{(j,k)}(u,\xi). 
\label{LG74}
\end{equation}
Consequently from (\ref{LG90}) we obtain
\begin{multline}
\varepsilon_{c,n}^{(j,k)}(u,\xi )=\dfrac{e^{-(u+c)\xi}}{2u}
\int_{\xi ^{(j)}}^{\xi }{e^{(u+c)t}\left\{ {\dfrac{{e^{-ct}}G_{n}(t)}{
u^{2n}}-\psi(t)\varepsilon_{c,n}^{(j,k)}(u,t)}\right\} dt} \\
+\dfrac{e^{(u-c)\xi} }{2u}\int_{\xi }^{\xi ^{(k)}}e^{-(u-c)t}
\left\{ {\dfrac{{e^{-ct}}G_{n}(t)}{u^{2n}}-\psi(t)
\varepsilon_{c,n}^{(j,k)}(u,t)}\right\} dt.  
\label{LG74b}
\end{multline}
Next integration by parts the terms involving ${e^{-ct}}G_{n}(t)$ gives
\begin{multline}
\varepsilon_{c,n}^{(j,k)}(u,\xi )=\dfrac{{e^{-c\xi }}G_{n}(\xi)}{
u^{2n}\left(u^{2}-c^{2}\right) } \\
-\dfrac{e^{-(u+c)\xi }}{2u}\int_{\xi
^{(j)}}^{\xi }{e^{(u+c)t}\left\{ {\dfrac{\left\{ {e^{-ct}G}_{n}(t)\right\}
^{\prime }}{u^{2n}(u+c)}+\psi(t)\varepsilon_{c,n}^{(j,k)}(u,t)}
\right\} dt} \\
+\dfrac{e^{(u-c)\xi }}{2u}\int_{\xi }^{\xi ^{(k)}}e^{-(u-c)t}
\left\{ {\dfrac{\left\{ {e^{-ct}G}_{n}(t)\right\} ^{\prime }}{u^{2n}(u-c)}
-\psi(t)\varepsilon_{c,n}^{(j,k)}(u,t)}\right\} dt, 
\label{LG73}
\end{multline}
and hence from the monotonicity condition on the paths of integration we see that 
\begin{multline}
\left\vert {\varepsilon_{c,n}^{(j,k)}(u,\xi )}\right\vert \leq
\left\vert \dfrac{{e^{-c\xi}}G_{n}(\xi)}{u^{2n}\left(u^{2}-c^{2}\right) }
\right\vert +\frac{1}{2|u| ^{2n+1}(|u|-|c|)}\int_{
\mathcal{L}_{c}^{(j,k)}(\xi)}{\left\vert {\left\{ {e^{-ct}G}
_{n}(t)\right\} ^{\prime }dt}\right\vert } \\
+\frac{1}{2|u|}\int_{\mathcal{L}_{c}^{(j,k)}(\xi
)}\left\vert {\psi(t)\varepsilon_{c,n}^{(j,k)}(u,t)dt}\right\vert.
\label{LG75}
\end{multline}
Then following a similar step to \cite[Eqs. (4.18) and (4.21)]{Dunster:2020:LGE}, and using (\ref{LG77}), yields the bound 
\begin{multline}
\int_{\mathcal{L}_{c}^{(j,k)}(\xi)}\left\vert \psi(t)\varepsilon_{c,n}^{(j,k)}(u,t)dt\right\vert  \\
\leq \frac{L_{c,n}^{(j,k)}(u,\xi )}{|u|^{2n+2}}
\left\{1-\dfrac{1}{2|u| }\int_{\mathcal{L}_{c}
^{(j,k)}(\xi)}{\left\vert\psi(t)dt\right\vert }\right\} ^{-1}\int_{\mathcal{L}_{c}^{(j,k)}(\xi)}\left\vert\psi(t)dt\right\vert,
\label{LG78}
\end{multline}
where $L_{c,n}^{(j,k)}(u,\xi )$ is given by (\ref{LG77}). On combining (\ref{LG74}), (\ref{LG75}) and (\ref{LG78}) we arrive at (\ref{LG83}). 
\end{proof}

Similarly if $G_{n}(\xi)=\mathcal{O}\left( {\xi ^{\sigma }}\right)$ and $G_{n}^{\prime }(\xi)=\mathcal{O}\left( \xi ^{\sigma -1}\right)$ ($\sigma \geq 0$) as $\xi \rightarrow \xi^{(j)}$ and $\xi \rightarrow \xi^{(k)}$, one can prove that 
\begin{multline}
\left\vert \varepsilon _{n}^{(j,k)}(u,\xi )\right\vert \leq
 \frac{1}{|u|^{2n+2}} \left\{\left\vert G_{n}(\xi)\right\vert +\dfrac{\left| \xi ^{\sigma +1} \right|}{2}\int_{\breve{\mathcal{L}}_{\sigma}^{(j,k)}(\xi)}{\left\vert t^{-\sigma -1}{G}_{n}^{\prime }(t){dt}\right\vert }\right\}  \\
 +\dfrac{
\left| \xi ^{\sigma +1} \right| \breve{L}_{\sigma,n}^{(j,k)}(u,\xi )}{2|u|^{2n+3}}\left\{ {1-\dfrac{1}{
2|u| }\int_{\breve{\mathcal{L}}_{\sigma}^{(j,k)}(\xi)}{
\left\vert {\psi(t)dt}\right\vert }}\right\} ^{-1}\int_{\breve{\mathcal{L}}_{\sigma}^{(j,k)}(\xi)}\left\vert {\psi(t)dt}\right\vert,  
\label{LG87a}
\end{multline}
where
\begin{equation}
\breve{L}_{\sigma,n}^{(j,k)}(\xi)=\sup_{t\in \breve{\mathcal{L}}_{\sigma}^{(j,k)}(\xi)}\left\vert {t}
^{-\sigma-1}G_{n}(t)\right\vert +\frac{1}{2}\int_{\breve{\mathcal{L}}_{\sigma}^{(j,k)}(\xi)}{\left\vert t^{-\sigma-1}{G}_{n}^{\prime}(t){dt}\right\vert}.  
\label{LG88}
\end{equation}
This time the path must meet the modified monotonicity condition: (ii)$'$ as $t$ passes along $\breve{\mathcal{L}}_{\sigma}^{(j,k)}(\xi)$ from $\xi^{(j)}$ to $\xi^{(k)}$, the real parts of both $ut + (\sigma+1)\ln(t)$ and $ut - (\sigma+1)\ln(t)$ are monotonic.

In terms of $z$ we write $w^{(j,k)}(u,z)= G^{(j,k)}(u,\xi(z))$ and can express \cref{thm4} in the following form.
\begin{theorem}
\label{thm6}
Under the conditions of \cref{thm4} and for $z \in Z^{(j,k)}(u) \setminus{\{z_{0}\}}$ there exists a unique particular solution of (\ref{eq1}) of the form 
\label{gz}
\begin{equation}
\label{eq42}
w^{(j,k)}(u,z)=\frac{1}{u^{2}}\sum\limits_{s=0}^{n-1} \frac{\hat{G}_{s}(z)}{u^{2s}} +\hat{\varepsilon}_{n}^{(j,k)}(u,z),
\end{equation}
where
\begin{equation}
\label{eq43}
\hat{G}_{0}(z)=-p(z)/f(z),
\end{equation}
\begin{equation}
\label{eq44}
\hat{G}_{s+1}(z)=\frac{\hat{G}''_{s}(z)-g(z)\hat{G}_{s} 
(z)}{f(z)}\quad (s=0,1,2,\cdots ),
\end{equation}
\begin{multline}
\label{eq45}
 \left|\hat{\varepsilon}_{n}^{(j,k)}(u,z) \right|
 \le 
\frac{1}{\vert u\vert^{2n+2}}\left\{ \left| \hat{G}_{n}(z)\right|
+\frac{1}{2 |f(z)|^{1/4}}
\int_{\hat{\mathcal{L}}^{(j,k)}(z)} {
\left| 
\left\{f^{1/4}(t) \hat{G}_{n}(t)\right\}'dt
\right|}  \right\} \\ 
 +\frac{\hat{{L}}_{n}^{(j,k)}(z)}{2 |u|^{2n+3}
 {|f(z)|}^{1/4} }\left\{ 
{1-\frac{1}{2\vert u\vert }\int_{\hat{\mathcal{L}}^{(j,k)}(z)} {\left| 
{f^{1/2}(t)\Phi(t)dt} \right|} } \right\}^{-1}\int_{\hat{\mathcal{L}}^{(j,k)}(z)} {\left| f^{1/2}(t)\Phi(t) dt \right|},
\end{multline}
in which
\begin{equation}
\label{eq46}
\hat{{L}}_{n}^{(j,k)}(z)=\sup _{t\in \hat{\mathcal{L}}^{(j,k)}(z)}
\left|f^{1/4}(t) \hat{G}_{n}(t)  \right|+\frac{1}{2}\int_{\hat{\mathcal{L}}^{(j,k)}(z)} \left| 
\left\{f^{1/4}(t)\hat{G}_{n}(t)\right\}'dt
\right|.
\end{equation}
\end{theorem}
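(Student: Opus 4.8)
The plan is to read \cref{thm6} as the pointwise image of \cref{thm4} under the Liouville transformation, so that nothing genuinely new has to be proved beyond a change of variable. Since $w^{(j,k)}(u,z)=G^{(j,k)}(u,\xi(z))$ and, by (\ref{LG67}), $f^{1/4}(z)G^{(j,k)}=u^{-2}\sum_{s=0}^{n-1}u^{-2s}G_{s}(\xi)+\varepsilon_{n}^{(j,k)}(u,\xi)$, I would introduce
\[
\hat{G}_{s}(z)=f^{-1/4}(z)\,G_{s}(\xi(z)),\qquad \hat{\varepsilon}_{n}^{(j,k)}(u,z)=f^{-1/4}(z)\,\varepsilon_{n}^{(j,k)}(u,\xi),
\]
and then multiply the bracketed expansion in (\ref{LG67}) by $f^{-1/4}(z)$. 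Matching powers of $u$ termwise immediately yields the stated form (\ref{eq42}), while uniqueness is inherited directly from \cref{thm4} because $w^{(j,k)}$ and $G^{(j,k)}$ are literally the same function written in the two variables. This reduces the theorem to two checks: that the $\hat{G}_{s}$ so defined obey (\ref{eq43})--(\ref{eq44}), and that the $\xi$-bound (\ref{LG82}) becomes the $z$-bound (\ref{eq45})--(\ref{eq46}).

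For the coefficients the base case is immediate: $\hat{G}_{0}=f^{-1/4}G_{0}=f^{-1/4}\bigl(-f^{-3/4}p\bigr)=-p/f$, which is (\ref{eq43}). For the recursion I would invoke the $u$-free part of the Liouville transformation underlying (\ref{xi}) and (\ref{eq5}), namely the operator identity
\[
\frac{d^{2}H}{d\xi^{2}}-\psi(\xi)H=f^{-3/4}(z)\left\{\frac{d^{2}h}{dz^{2}}-g(z)h\right\},\qquad H=f^{1/4}(z)\,h,
\]
which is exactly the transformation that defines $\Phi$, and hence $\psi$, in the first place. Applying it with $H=G_{s}$ and $h=\hat{G}_{s}$ turns the $\xi$-recursion (\ref{LG68}), $G_{s+1}=G_{s}''-\psi G_{s}$, into $G_{s+1}=f^{-3/4}\{\hat{G}_{s}''-g\hat{G}_{s}\}$, and multiplying by $f^{-1/4}$ gives precisely $\hat{G}_{s+1}=\{\hat{G}_{s}''-g\hat{G}_{s}\}/f$, i.e.\ (\ref{eq44}).

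For the error bound I would translate each ingredient of (\ref{LG82}) along the path, using $d\xi=f^{1/2}(z)\,dz$ so that $\psi(\xi)\,d\xi=f^{1/2}(z)\Phi(z)\,dz$ and, by coordinate-invariance of the differential, $\{dG_{n}/d\xi\}\,d\xi=d\{f^{1/4}\hat{G}_{n}\}=\{f^{1/4}(z)\hat{G}_{n}(z)\}'\,dz$. Multiplying (\ref{LG82}) through by $|f(z)|^{-1/4}$ then sends $|f|^{-1/4}|G_{n}|$ to $|\hat{G}_{n}|$, the $\xi$-integral of $|G_{n}'\,dt|$ to $\int_{\hat{\mathcal{L}}^{(j,k)}(z)}|\{f^{1/4}\hat{G}_{n}\}'\,dt|$, the $\psi$-integrals to $\int_{\hat{\mathcal{L}}^{(j,k)}(z)}|f^{1/2}\Phi\,dt|$, and $L_{n}^{(j,k)}$ to $\hat{L}_{n}^{(j,k)}$ of (\ref{eq46}); the surviving fixed factor $|f(z)|^{-1/4}$ lands in the denominators exactly as displayed in (\ref{eq45}). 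The one step needing care --- and the only real obstacle --- is this chain-rule bookkeeping for the derivative term: one must recognise that the invariant $1$-form $G_{n}'(\xi)\,d\xi$ is the $z$-differential of the \emph{product} $f^{1/4}\hat{G}_{n}$, not of $\hat{G}_{n}$ alone, and then verify that the solitary $f^{-1/4}(z)$ distributes consistently across all three groups of terms. Everything else is a direct substitution using the already-established correspondence between $\mathcal{L}^{(j,k)}(\xi)$ and $\hat{\mathcal{L}}^{(j,k)}(z)$.
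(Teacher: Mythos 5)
Your proposal is correct and is exactly the route the paper takes: the paper offers no separate proof of this theorem, presenting it simply as \cref{thm4} rewritten via $w^{(j,k)}(u,z)=G^{(j,k)}(u,\xi(z))$, $\hat{G}_{s}(z)=f^{-1/4}(z)G_{s}(\xi)$ and $\hat{\varepsilon}_{n}^{(j,k)}(u,z)=f^{-1/4}(z)\varepsilon_{n}^{(j,k)}(u,\xi)$ (the latter recorded in (\ref{eq46b})), with $d\xi=f^{1/2}(z)\,dz$ converting the paths and integrals. Your explicit verification of the operator identity behind the recursion (\ref{eq44}) and the chain-rule bookkeeping for $G_{n}'(\xi)\,d\xi=\{f^{1/4}(t)\hat{G}_{n}(t)\}'\,dt$ supplies details the paper leaves implicit, and both check out.
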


We end this section by observing that the expansion (\ref{eq42}) can be differentiated to give an approximation for the derivatives of the solutions. To do so, we have from (\ref{xi}) and (\ref{LG67}) that $d\xi/dz=f^{1/2}(z)$ and
\begin{equation}
\hat{\varepsilon}_{n}^{(j,k)}(u,z)
=f^{-1/4}(z)\varepsilon_{n}^{(j,k)}(u,\xi).
\label{eq46b}
\end{equation}
Thus from differentiating (\ref{eq42}) we obtain
\begin{equation}
\frac{\partial w^{(j,k)}(u,z)}{\partial z}=\frac{1}{u^{2}}\sum\limits_{s=0}^{n-1} \frac{{\hat{G}_{s}}'(z)}{u^{2s}} 
-\frac{f'(z)\varepsilon _{n}^{(j,k)}(u,\xi )}{4f^{5/4}(z)}
+f^{1/4}(z) \frac{\partial 
\varepsilon _{n}^{(j,k)}(u,\xi )}{\partial \xi }
\label{eq46c}
\end{equation}
and in this bounds on the error terms are supplied by \cref{thm4} and \cref{thm5}.

\section{Scorer functions} 
\label{scorer}

The Scorer function $\mathrm{Hi}(z)$ plays an important role in our subsequent expansions, and is defined by
\begin{equation}
\label{eq2}
\mathrm{Hi}(z)=\frac{1}{\pi }\int_{0}^{\infty} \exp  \left(-\tfrac{1}{3} t^{3}+z t \right)dt.
\end{equation}
It is also the uniquely defined particular solution of the inhomogeneous Airy equation
\begin{equation}
\label{eq3}
\frac{d^{2}w}{dz^{2}}-zw=\frac{1}{\pi},
\end{equation}
having the behaviour
\begin{equation}
\label{eq2a}
\mathrm{Hi}(z)\sim -\frac{1}{\pi z} \quad \left(z\to \infty,\; \left| \arg (-z) \right|\le \tfrac{2}{3}\pi -\delta \right),
\end{equation}
for arbitrary small positive $\delta$. Its uniqueness is by virtue of all other particular solutions being exponentially large in part, or whole, of the specified sector. This is easily seen by noting that for arbitrary constants $c_{\pm 1}$
\begin{equation}
\label{eq3a}
w(z)=\mathrm{Hi}(z) +c_{-1}\mathrm{Ai}_{-1}(z)
+c_{1}\mathrm{Ai}_{1}(z),
\end{equation}
is the general solution of (\ref{eq3}). Recalling that $\mathrm{Ai}_{\pm 1}(z)$ is exponentially small in $\mathrm{\mathbf{S}}_{\pm 1}$ (as defined by (\ref{eq6}) with $\zeta=z$) and exponentially large in $\mathrm{\mathbf{S}}_{\mp 1}$, we see that the function (\ref{eq3a}) can only be bounded in $\mathrm{\mathbf{S}}_{-1} \cup \mathrm{\mathbf{S}}_{1}$ if $c_{-1}=c_{1}=0$.

From variation of parameters on (\ref{eq3}) we find the Scorer function also has the useful contour integral representation
\begin{equation}
\label{eq4}
\mathrm{Hi}(z)=2i\left\{\mathrm{Ai}_{-1}(z)\int_{\infty \exp (2\pi i/3)}^{z} {\mathrm{Ai}_{1}(t)dt} -\mathrm{Ai}_{1} \left( z 
\right)\int_{\infty \exp (-2\pi i/3)}^{z} {\mathrm{Ai}_{-1}(t)dt} \right\}.
\end{equation}
In deriving this we used (\ref{Aiinfinity}), (\ref{eq2a}) and the Wronskian \cite[Eq. 9.2.9]{NIST:DLMF}
\begin{equation}
\label{eq70}
\mathscr{W}\left\{ {\mathrm{Ai}_{1}(z),\mathrm{Ai}_{-1}(z)} \right\}=\frac{1}{2\pi i}.
\end{equation}
We also record here an accompanying Wronskian \cite[Eq. 9.2.8]{NIST:DLMF} which we shall use later, viz.
\begin{equation}
\label{eq71}
\mathscr{W}\left\{ {\mathrm{Ai}(z),\mathrm{Ai}_{\pm 1}(z)} \right\}=\frac{e^{\pm \pi i/6}}{2\pi}.
\end{equation}

The extension of (\ref{eq2a}) (and for the derivative of the function) to an asymptotic expansion is well known \cite[Sect. 9.12(viii)]{NIST:DLMF}. Here we give simple error bounds, which somewhat surprisingly appear to be new. In this we use the superscript $(-1,1)$ on the error terms because $\mathrm{Hi}(z)$ is bounded for $z \in \mathbf{S}_{-1} \cup \mathbf{S}_{1}$. See also (\ref{eq22a}) - (\ref{eq22c}) below.
\begin{theorem}
\label{thm:Hi}
Let $\theta=\arg(-z)$ and $\delta$ be an arbitrary small positive number. Then for integer $n\geq 0$
\begin{equation}
\label{eq7}
\mathrm{Hi}(z)=-\frac{1}{\pi z}\sum\limits_{k=0}^n 
\frac{(3k)!}{k!(3z^{3})^{k}} +\frac{1}{\pi }\varepsilon_{2n+4}^{(-1,1)}(z),
\end{equation}
where
\begin{equation}
\label{eq13}
\left| {\varepsilon_{2n+4}^{(-1,1)}(z)} \right|\le 
\frac{(3n+3)!}{3^{n+1}(n+1)!|z|^{3n+4}}
\quad \left( \vert \theta \vert \le 
\tfrac{1}{6}\pi \right),
\end{equation}
and
\begin{equation}
\label{eq15c}
 \left| {\varepsilon_{2n+4}^{(-1,1)}(z)} \right|\le 
\frac{(3n+3)!}{3^{n+1}(n+1)!\left\{ {|z| \cos \left( {\vert 
\theta \vert -\tfrac{1}{6}\pi } \right)} \right\}^{3n+4}}
\quad \left( \tfrac{1}{6}\pi <\vert \theta \vert \le \tfrac{2}{3}\pi -\delta  \right).
\end{equation}
Likewise
\begin{equation}
\label{eq17}
\mathrm{Hi}'(z)=\frac{1}{\pi z^{2}}\sum\limits_{k=0}^n \frac{(3k+1)!}{k!(3z^{3})^{k}} +\frac{1}{\pi }\tilde{{\varepsilon}}_{2n+5}^{(1,-1)}(z),
\end{equation}
where
\begin{equation}
\label{eq19}
\left| \tilde{\varepsilon }_{2n+5}^{(-1,1)}(z)
\right|\le 
\frac{(3n+4)!}{3^{n+1}(n+1)!|z|^{3n+5}}\quad \left( 
| \theta | \le \tfrac{1}{6}\pi  \right),
\end{equation}
and
\begin{equation}
\label{eq20}
\left| \tilde{\varepsilon}_{2n+5}^{(-1,1)}(z) \right|\le \frac{(3n+4)!}{3^{n+1}(n+1)!\left\{ {|z| \cos \left(|\theta| -\tfrac{1}{6}\pi \right)} \right\}^{3n+5}}\quad \left( \tfrac{1}{6}\pi <
| \theta | \le \tfrac{2}{3}\pi 
-\delta  \right).
\end{equation}
\end{theorem}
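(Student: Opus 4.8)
The plan is to work directly from the defining integral (\ref{eq2}). Writing $\mathrm{Hi}(z)=\frac{1}{\pi}\int_{0}^{\infty}e^{zt-t^{3}/3}\,dt$, I would expand the factor $e^{-t^{3}/3}$ about $t=0$ in a Taylor polynomial of degree $3n$ plus remainder,
\[
e^{-t^{3}/3}=\sum_{k=0}^{n}\frac{(-1)^{k}t^{3k}}{3^{k}k!}+R_{n}(t),
\]
and integrate term by term. Using $\int_{0}^{\infty}t^{3k}e^{zt}\,dt=(3k)!/(-z)^{3k+1}$ (on a ray where $e^{zt}$ decays) together with $(-z)^{3k+1}=-(-1)^{k}z^{3k+1}$, the polynomial part collapses to exactly the finite sum in (\ref{eq7}), and the remainder is identified as $\varepsilon_{2n+4}^{(-1,1)}(z)=\int e^{zt}R_{n}(t)\,dt$. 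The same computation applied to $\mathrm{Hi}'(z)=\frac{1}{\pi}\int_{0}^{\infty}t\,e^{zt-t^{3}/3}\,dt$, using $\int_{0}^{\infty}t^{3k+1}e^{zt}\,dt=(3k+1)!/(-z)^{3k+2}$, produces (\ref{eq17}) and its remainder.

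The subtlety is that the individual power integrals only converge when $\Re(z)<0$, whereas the claimed sectors reach $|\theta|=\tfrac{2}{3}\pi-\delta>\tfrac{1}{2}\pi$, where $\Re(z)$ may be positive. I would therefore first rotate the whole (convergent) integral in (\ref{eq2}) onto the ray $\arg t=\omega$, i.e. $\mathrm{Hi}(z)=\frac{1}{\pi}\int_{0}^{\infty e^{i\omega}}e^{zt-t^{3}/3}\,dt$, and only then split into sum plus remainder. Writing $z=|z|e^{i(\theta+\pi)}$ one has $\Re(zt)=-|z|\rho\cos(\theta+\arg t)$ and $\Re(-t^{3}/3)=-\tfrac{1}{3}\rho^{3}\cos(3\arg t)$ on a circular arc of radius $\rho$; for $\arg t$ between $0$ and $\omega$ with $|\omega|\le\tfrac{1}{6}\pi$ the cubic term gives decay wherever $\cos(3\arg t)>0$, and the linear term supplies decay at the marginal endpoint, so the arc contribution vanishes and the rotation is justified by Cauchy's theorem throughout $|\theta|\le\tfrac{2}{3}\pi-\delta$. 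On the rotated ray $\Re(ze^{i\omega})=-|z|\cos(\theta+\omega)$, so to sharpen the estimate I would choose $\omega=-\theta$ when $|\theta|\le\tfrac{1}{6}\pi$ (giving the pure factor $|z|$) and $\omega=\mp\tfrac{1}{6}\pi$ when $\tfrac{1}{6}\pi<|\theta|\le\tfrac{2}{3}\pi-\delta$ (giving $|z|\cos(|\theta|-\tfrac{1}{6}\pi)$); the case $\theta<0$ follows from $\mathrm{Hi}(\bar z)=\overline{\mathrm{Hi}(z)}$.

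To bound the remainder I would use the integral form $R_{n}(t)=\frac{w^{n+1}}{n!}\int_{0}^{1}(1-\lambda)^{n}e^{\lambda w}\,d\lambda$ with $w=-t^{3}/3$. Since $|\omega|\le\tfrac{1}{6}\pi$ forces $\cos(3\omega)\ge0$, one has $\Re(w)\le0$ on the ray, so $e^{\lambda\Re w}\le1$ and $|R_{n}(t)|\le |w|^{n+1}/(n+1)!=\tau^{3n+3}/\{3^{n+1}(n+1)!\}$ for $t=\tau e^{i\omega}$. Combining this with $|e^{zt}|=e^{-\tau|z|\cos(\theta+\omega)}$ and evaluating $\int_{0}^{\infty}\tau^{3n+3}e^{-\tau|z|\cos(\theta+\omega)}\,d\tau=(3n+3)!\,\{|z|\cos(\theta+\omega)\}^{-3n-4}$ yields (\ref{eq13}) and (\ref{eq15c}) at once, and the analogous estimate with the extra factor $t$ (hence $\tau^{3n+4}$ and $(3n+4)!$) gives (\ref{eq19}) and (\ref{eq20}). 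I expect the main obstacle to be the contour-rotation step: verifying that the arc at infinity vanishes uniformly across the full sector, in particular at the marginal direction $\omega=\pm\tfrac{1}{6}\pi$ where the cubic decay degenerates and one must lean on the linear term — and recognizing that rotating the combined integrand before splitting is essential, since the polynomial pieces individually grow on the real axis once $|\theta|>\tfrac{1}{2}\pi$.
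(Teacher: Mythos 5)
Your proposal is correct and follows essentially the same route as the paper: the Maclaurin polynomial with integral remainder for $e^{-t^{3}/3}$ (your $\frac{w^{n+1}}{n!}\int_0^1(1-\lambda)^n e^{\lambda w}\,d\lambda$ is the paper's remainder after the substitution $v=\lambda t^3/3$), the bound $|R|\le |t|^{3n+3}/\{3^{n+1}(n+1)!\}$ valid for $|\arg t|\le\tfrac16\pi$, rotation to the ray $\arg t=-\theta$ or $\mp\tfrac16\pi$, and the gamma-integral evaluation. The only difference is presentational — you rotate the full integral before splitting, whereas the paper splits first and deforms the remainder integral afterwards (relying implicitly on analytic continuation when $\Re(z)>0$) — and your explicit justification of the vanishing arc at the marginal direction $\omega=\pm\tfrac16\pi$ is a welcome tightening of that step.
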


\begin{proof}
We use the Maclaurin polynomial with remainder term \cite[Sect. 1.4(vi)]{NIST:DLMF}
\begin{equation}
\label{eq5aa}
\exp \left(-\tfrac{1}{3}t^{3} \right)=\sum\limits_{k=0}^n
{(-1)^{k}\frac{t^{3k}}{3^{k}k!}} +R_{3n+3}(t),
\end{equation}
where
\begin{equation}
\label{eq6aa}
R_{3n+3}(t)=\frac{(-1)^{n+1}}{n!}\int_{0}^{t^{3}/3} \left( \tfrac{1}{3}t^{3}-v \right)^{n}e^{-v} dv.
\end{equation}
Although the form (\ref{eq6aa}) for the remainder is typically associated with real argument, it easily verified here to be valid for all complex $t$, and we take the path of integration to be a straight line.

Plugging (\ref{eq5aa}) into (\ref{eq2}) we obtain, by integrating term by term, the series (\ref{eq7}) with remainder
\begin{equation}
\label{eq8}
\varepsilon_{2n+4}^{(-1,1)}(z)=\int_{0}^{\infty} {R_{3n+3}(t)e^{zt}} dt.
\end{equation}
Next let $\phi =\arg(t^{3})$, and then by change of variable $v \rightarrow ve^{i\phi }$ we have
\begin{multline}
\label{eq9aa}
\int_{0}^{t^{3}/3} {\left( {\tfrac{1}{3}t^{3}-v} \right)^{n}e^{-v}} dv=e^{i\phi }\int_{0}^{t^{3}\exp (-i\phi )/3} {\left( \tfrac{1}{3}t^{3}-ve^{i\phi } \right)^{n}\exp \left( {-ve^{i\phi }} \right)} dv 
 \\ =e^{i(n+1)\phi }\int_{0}^{\vert t\vert^{3}/3}
 {\left( \tfrac{1}{3}\vert t\vert^{3}-v \right)^{n}\exp \left( {-ve^{i\phi }} \right)} dv.
\end{multline}
Now if $|\phi| \le \frac{1}{2}\pi $ we have $|\exp(-ve^{i\phi })|\le 1$ for $v$ real and non-negative, and hence from (\ref{eq6aa}) and (\ref{eq9aa}) we deduce that
\begin{equation}
\label{eq10}
\left| {R_{3n+3}(t)} \right|\le \frac{1}{n!}\int_{0}^{\vert t\vert^{3}/3} 
\left( {\tfrac{1}{3}\vert t\vert^{3}-v} \right)^{n} dv,
\end{equation}
which on integration yields
\begin{equation}
\label{eq11}
\left| {R_{3n+3}(t)} \right|\le \frac{\vert t\vert 
^{3n+3}}{3^{n+1}(n+1)!}\quad \left( {\vert \arg(t)\vert \le \tfrac{1}{6}\pi } \right).
\end{equation}

First assume that $|\theta| \le 
\frac{1}{6}\pi$. Then by deforming the contour of (\ref{eq8}) to the line $t\exp(-i\theta )$ where $0\le t<\infty $, we have $zt\exp(-i\theta )=-|z|t$ and hence
\begin{equation}
\label{eq12}
\varepsilon_{2n+4}^{(-1,1)}(z)=e^{-i\theta }\int_{0}^{\infty} {R_{3n+3} 
\left(te^{-i\theta } \right)e^{-|z| t}} dt.
\end{equation}
Thus from (\ref{eq8}), (\ref{eq11}) and (\ref{eq12}) we get
\begin{equation}
\label{eq13aaa}
\left| {\varepsilon_{2n+4}^{(-1,1)}(z)} \right|\le 
\frac{1}{3^{n+1}(n+1)!}\int_{0}^{\infty} {t^{3n+3}e^{-|z| t}} dt,
\end{equation}
which on using
\begin{equation}
\label{eq13bb}
\int_{0}^{\infty} {t^{n}e^{-a t}} dt
=\frac{n!}{a^{n+1}} 
\quad \left(n=0,1,2,\cdots, \, \Re(a)>0\right),
\end{equation}
yields (\ref{eq13}).

Next for $\frac{1}{6}\pi <\vert \theta \vert \le \frac{2}{3}\pi -\delta $ deform the contour of (\ref{eq8}) to the line $t\exp (\mp i\pi /6)$ (for $\theta \gtrless 0$) where $0\le t<\infty $. 
We then have
\begin{equation}
\label{eq14}
\varepsilon_{2n+4}^{(-1,1)}(z) =e^{\mp i\pi /6}\int_{0}^{\infty} {R_{3n+3} 
\left( {te^{\mp i\pi /6}} \right)\exp \left\{ {-|z| t\exp \left( {\pm i\left| {\theta \mp \tfrac{1}{6}\pi } \right|} \right)} 
\right\}} dt,
\end{equation}
and therefore from (\ref{eq11})
\begin{equation}
\label{eq15}
 \left| {\varepsilon_{2n+4}^{(-1,1)}(z)} \right|\le 
\frac{1}{3^{n+1}(n+1)!}\int_{0}^{\infty} {t^{3n+3}\exp \left\{ {-|z| t\cos \left(|\theta| -\tfrac{1}{6}\pi\right)} 
\right\}} dt,
\end{equation}
again with $t$ real. The bound (\ref{eq15c}) then follows from (\ref{eq13bb}).

Finally on differentiating (\ref{eq2}) we have
\begin{equation}
\label{eq16}
\mathrm{Hi}'(z)=\frac{1}{\pi }\int_{0}^{\infty} {t\exp } \left( -\tfrac{1}{3}t^{3}+zt \right)dt,
\end{equation}
and so from inserting (\ref{eq5aa}) we get (\ref{eq17}) where
\begin{equation}
\label{eq18}
\tilde{\varepsilon }_{2n+5}^{(-1,1)}(z)=\int_{0}^{\infty} {t R_{3n+3}(t)e^{zt}} dt.
\end{equation}
The bounds (\ref{eq19}) and (\ref{eq20}) then follow from this in a similar manner to the derivation of (\ref{eq13}) and (\ref{eq15c}).
\end{proof}

Now $\mathrm{Hi}(z)$ is real-valued but exponentially large for positive $z=x$. The function $w(x)=-\mathrm{Gi}(x)$ is a solution of (\ref{eq3}) that is real-valued and bounded as $x \rightarrow \infty$, and hence is usually preferred in this situation. It is defined by
\begin{equation}
\label{Gi}
\mathrm{Gi}(x)=\frac{1}{\pi }\int_{0}^{\infty} \sin  \left(\tfrac{1}{3} t^{3}+x t \right)dt \quad (x \in \mathbb{R}).
\end{equation}

Although we will not make use of this function in this paper it is useful in applications, and so we give error bounds for its asymptotic approximation for large positive $x$. To do so we use, for complex $z$, the identity
\begin{equation}
\label{eq30}
\mathrm{Gi}(z)=\tfrac{1}{2}e^{\pi i/3}\mathrm{Hi}\left( 
{ze^{-2\pi i/3}} \right)+\tfrac{1}{2}e^{-\pi i/3}\mathrm{Hi}\left( 
{ze^{2\pi i/3}} \right).
\end{equation}
Hence from (\ref{eq7})
\begin{multline}
\label{eq31}
\mathrm{Gi}(z)=\frac{1}{\pi z}\sum\limits_{k=0}^n \frac{(3k)!}{k!\left( 
{3z^{3}} \right)^{k}} \\
+\frac{e^{\pi i/3}}{2\pi }\varepsilon_{2n+4}^{(-1,1)} \left( ze^{-2\pi i/3} \right)
+\frac{e^{-\pi i/3}}{2\pi 
}\varepsilon_{2n+4}^{(-1,1)} \left( {ze^{2\pi i/3}} \right),
\end{multline}
and similarly for its derivative.

From this we can apply \cref{thm:Hi} to obtain error bounds, valid for $\arg(z) \leq \tfrac{1}{3}\pi -\delta$. We present the case for positive argument only.

\begin{theorem}
For $x>0$
\begin{equation}
\label{eq31a}
\mathrm{Gi}(x)=\frac{1}{\pi x}\sum\limits_{k=0}^n \frac{(3k)!}{k!\left( 
{3x^{3}} \right)^{k}} 
+\frac{1}{\pi}\varepsilon_{2n+4}^{(\mathrm{Gi})}(x),
\end{equation}
and
\begin{equation}
\label{eq32}
\mathrm{Gi}'(x)=-\frac{1}{\pi x^{2}}\sum\limits_{k=0}^n
\frac{(3k+1)!}{k!\left( {3x^{3}} \right)^{k}} +\frac{1}{\pi}\tilde{\varepsilon}_{2n+5}^{(\mathrm{Gi})}(x),
\end{equation}
where
\begin{equation}
\label{eq31b}
 \left| \varepsilon_{2n+4}^{(\mathrm{Gi})}(x) \right|\le
\frac{(3n+3)!}{3^{(5n/2)+3}(n+1)!\left(\tfrac{1}{2}x\right)^{3n+4}},
\end{equation}
and
\begin{equation}
\label{eq32a}
\left| \tilde{\varepsilon}_{2n+5}^{(\mathrm{Gi})}(x)
\right|
\leq
\frac{(3n+4)!}{3^{(5n+7)/2}(n+1)!\left(\tfrac{1}{2}x  \right)^{3n+5}}.
\end{equation}
\end{theorem}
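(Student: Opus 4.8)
The plan is to obtain both statements as direct corollaries of \cref{thm:Hi} via the connection identity \eqref{eq30} and its derivative, so that no new integral estimates are required. Specializing \eqref{eq31} to $z=x>0$ and comparing with \eqref{eq31a} shows that the Gi error term is exactly the combination
\[
\varepsilon_{2n+4}^{(\mathrm{Gi})}(x)=\tfrac{1}{2}e^{\pi i/3}\varepsilon_{2n+4}^{(-1,1)}\!\left(xe^{-2\pi i/3}\right)+\tfrac{1}{2}e^{-\pi i/3}\varepsilon_{2n+4}^{(-1,1)}\!\left(xe^{2\pi i/3}\right).
\]
For the derivative I would first differentiate \eqref{eq30}, which brings down factors $e^{\mp 2\pi i/3}$ and reduces the two inner phases to $\tfrac{1}{2}e^{\mp\pi i/3}$; after using that the cubes of the rotated arguments equal $z^{3}$ and the squares equal $z^{2}e^{\pm 2\pi i/3}$, the polynomial parts combine to the $-\tfrac{1}{\pi x^{2}}\sum$ of \eqref{eq32}, leaving the analogous error combination
\[
\tilde\varepsilon_{2n+5}^{(\mathrm{Gi})}(x)=\tfrac{1}{2}e^{-\pi i/3}\tilde\varepsilon_{2n+5}^{(-1,1)}\!\left(xe^{-2\pi i/3}\right)+\tfrac{1}{2}e^{\pi i/3}\tilde\varepsilon_{2n+5}^{(-1,1)}\!\left(xe^{2\pi i/3}\right).
\]

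The key step is to identify the argument $\theta=\arg(-z)$ for the two rotated points. For $z=xe^{\mp2\pi i/3}$ with $x>0$ one has $-z=xe^{\pm i\pi/3}$, so $|\theta|=\tfrac{1}{3}\pi$ in both cases. Since $\tfrac{1}{6}\pi<\tfrac{1}{3}\pi\le\tfrac{2}{3}\pi-\delta$, the relevant estimates from \cref{thm:Hi} are \eqref{eq15c} and \eqref{eq20}, not \eqref{eq13} and \eqref{eq19}. With $|\theta|-\tfrac{1}{6}\pi=\tfrac{1}{6}\pi$ and $\cos\tfrac{1}{6}\pi=\tfrac{1}{2}\sqrt{3}$, the modulus appearing in the denominators is $|z|\cos(|\theta|-\tfrac{1}{6}\pi)=\tfrac{1}{2}\sqrt{3}\,x$ for each term.

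Then I would apply the triangle inequality, using $|e^{\pm i\pi/3}|=1$ so that the two equal half-contributions sum to a single copy of the bound; this gives
\[
\left|\varepsilon_{2n+4}^{(\mathrm{Gi})}(x)\right|\le\frac{(3n+3)!}{3^{n+1}(n+1)!\,\bigl(\tfrac{1}{2}\sqrt{3}\,x\bigr)^{3n+4}},
\]
and likewise for the derivative with $(3n+4)!$ and exponent $3n+5$. The final step is the algebraic simplification: writing $\bigl(\tfrac{1}{2}\sqrt{3}\,x\bigr)^{3n+4}=3^{(3n+4)/2}\bigl(\tfrac{1}{2}x\bigr)^{3n+4}$ and collecting $3^{n+1}3^{(3n+4)/2}=3^{(5n+6)/2}=3^{(5n/2)+3}$ reproduces \eqref{eq31b}, while the same bookkeeping with $3^{n+1}3^{(3n+5)/2}=3^{(5n+7)/2}$ reproduces \eqref{eq32a}. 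I expect the only real care needed is tracking the phase factors in the derivative identity and confirming that $|\theta|=\tfrac{1}{3}\pi$ lands strictly inside the validity range $(\tfrac{1}{6}\pi,\tfrac{2}{3}\pi-\delta]$ of the sharper bounds; everything else is routine arithmetic.
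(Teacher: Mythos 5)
Your proposal is correct and follows exactly the route the paper intends: it specializes the connection identity \eqref{eq30}/\eqref{eq31} (and its derivative) to $z=x>0$, notes that the rotated arguments have $|\arg(-z)|=\tfrac{1}{3}\pi$ so that the bounds \eqref{eq15c} and \eqref{eq20} apply with $\cos(\tfrac{1}{3}\pi-\tfrac{1}{6}\pi)=\tfrac{1}{2}\sqrt{3}$, and the triangle inequality plus the power-of-three bookkeeping gives \eqref{eq31b} and \eqref{eq32a}. The paper leaves these details implicit, and your arithmetic (including the cancellation of the phase factors producing the minus sign in \eqref{eq32}) checks out.
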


We finish this section by defining three numerically satisfactory scaled Scorer functions that will appear in our asymptotic solutions of (\ref{eq1}) valid near the turning point. These are given by
\begin{equation}
\label{eq22a}
 \mathrm{Wi}^{(-1,1)}(z)=\pi \mathrm{Hi}(z),
\end{equation}
 \begin{equation}
\label{eq22b}
 \mathrm{Wi}^{(0,1)}(z)=\pi e^{-2\pi i/3}\mathrm{Hi}\left(ze^{-2\pi i/3}\right),
\end{equation}
and
\begin{equation}
\label{eq22c}
 \mathrm{Wi}^{(-1,0)}(z)=\pi e^{2\pi i/3}\mathrm{Hi}\left(ze^{2\pi i/3} 
\right).
\end{equation}

From \cite[Eq. 9.12.14]{NIST:DLMF}
\begin{equation}
\label{eq21}
\mathrm{Hi}(z)=e^{\pm 2\pi i/3}\mathrm{Hi}\left( {ze^{\pm 2\pi i/3}} \right)+2e^{\mp \pi i/6}\mathrm{Ai}_{\pm 1}(z),
\end{equation}
we get connection formulas
\begin{equation}
\label{eq23}
\mathrm{Wi}^{(-1,1)}(z)=\mathrm{Wi}^{(0,1)}(z)+2\pi e^{-\pi i/6}\mathrm{Ai}_{1}(z),
\end{equation}
and
\begin{equation}
\label{eq24}
\mathrm{Wi}^{(-1,1)}(z)=\mathrm{Wi}^{(-1,0)}(z)+2\pi e^{\pi i/6}\mathrm{Ai}_{-1}(z).
\end{equation}
From these and (\ref{eq25}) we also have the relation
\begin{equation}
\label{eq26}
\mathrm{Wi}^{(0,1)}(z)=\mathrm{Wi}^{(-1,0)}(z)+2\pi i\mathrm{Ai}(z).
\end{equation}

The significance of these three functions is that each $\mathrm{Wi}^{(j,k)}(u^{2/3}\zeta)$ is a particular solution of \footnote{In \cite{Dunster:2020:ASI} the RHS $1$ should be $u^{4/3}$}
\begin{equation}
\label{eq27}
d^{2}W/d\zeta^{2}-u^{2}\zeta W=u^{4/3},
\end{equation}
having the unique behaviour of being bounded in $\mathbf{T}_{j}\cup \mathbf{T}_{k}$ as $u \rightarrow \infty$. This can be seen from their definitions, (\ref{eq3}), and \cref{thm:Hi} which yields
\begin{equation}
\label{eq28}
\mathrm{Wi}^{(j,k)}(u^{2/3}\zeta) = -\frac{1}{u^{2/3}\zeta}\sum\limits_{k=0}^{n} 
\frac{(3k)!}{k!(3u^{2}\zeta^{3})^{k}}
+\varepsilon_{2n+4}^{(j,k)}(u^{2/3}\zeta),
\end{equation}
and
\begin{equation}
\label{eq29}
{\mathrm{Wi}^{(j,k)}}'(u^{2/3}\zeta)= \frac{1}{u^{4/3}\zeta^{2}}\sum\limits_{k=0}^{n} 
\frac{(3k+1)!}{k!(3u^{2}\zeta^{3})^{k}}
+\tilde{\varepsilon}_{2n+5}^{(j,k)}(u^{2/3}\zeta),
\end{equation}
where
\begin{equation}
\label{eq28a}
\varepsilon_{2n+4}^{(j,k)}(z)
=\varepsilon_{2n+4}^{(-1,1)}\left(ze^{-2(j+k)\pi i/3}\right),
\end{equation}
and
\begin{equation}
\label{eq28b}
\tilde{\varepsilon}_{2n+5}^{(j,k)}(z)
=\tilde{\varepsilon}_{2n+5}^{(-1,1)}\left(ze^{-2(j+k)\pi i/3}\right).
\end{equation}
The error terms satisfy the bounds (\ref{eq13}) and (\ref{eq15c}) with $z=u^{2/3}\zeta e^{-2(j+k)\pi i/3}$, and hence for large $u^{2/3}\zeta$ are $\mathcal{O}\{(u^{2/3}\zeta)^{-2n-4}\}$ and $\mathcal{O}\{(u^{2/3}\zeta)^{-2n-5}\}$, respectively, in $\mathbf{T}_{j}\cup \mathbf{T}_{k}$ (except near the boundary of this domain).

The (exponentially large) asymptotic behaviour of $\mathrm{Wi}^{(j,k)}(u^{2/3}\zeta)$ for $\zeta \in \mathbf{T}_{l}$ ($l\neq j,k$) comes from (\ref{Aiinfinity}), an appropriate choice from the connection formulas (\ref{eq23}) - (\ref{eq26}), along with (\ref{eq28}). Similarly for the derivatives.

\section{Solutions close to turning point} 
\label{close}
In \cref{away} the three solutions of (\ref{eq1}) $w^{(j,k)}(u,z)$ were defined which involve simple expansions and error bounds (\cref{thm6}), and have the uniquely-defining behaviour of being bounded in $Z^{(j,k)}(u)$. Since these asymptotic expansions break down near $z=z_{0}$ we now derive approximations for them that are valid in an unbounded domain that contains this turning point, and these will involve the Scorer functions defined in the previous section.

We do this by utilising certain connections formulas, the first of which comes from noticing the difference of any two particular solutions of (\ref{eq1}) is a solution of the corresponding homogeneous equation (\ref{odehomog}). Hence we can assert that
\begin{equation}
\label{eq53}
w^{(-1,1)}(u,z)-w^{(0,1)}(u,z)=c_{m,0}(u)w_{m,0}(u,z)+c_{m,1}(u)w_{m,1}(u,z),
\end{equation}
for some constants $c_{m,0}(u)$ and $c_{m,1}(u)$, and where $w_{m,j}(u,z)$ ($j=0,1$) are the homogeneous solutions given by (\ref{eq47}).
On letting $z\rightarrow z^{(1)}$ we see that all functions in (\ref{eq53}) are bounded, with the exception of $w_{m,0}(u,z)$, which implies $c_{m,0}(u)=0$. For later convenience we write $c_{m,1}(u)=2\pi e^{-\pi i/6}\gamma_{m}(u)$ and hence can express (\ref{eq53}) in the form
\begin{equation}
\label{eq54}
w^{(-1,1)}(u,z)=w^{(0,1)}(u,z)+2\pi e^{-\pi i/6}\gamma_{m}(u)w_{m,1}(u,z).
\end{equation}

With this definition of the connection coefficient $\gamma_{m}(u)$ we can state our main result. In this we define
\begin{equation}
\label{eq54a}
Z^{(\mathcal{G})}(u):=Z^{(-1,1)}(u)\cup Z^{(0,1)}(u)\cup Z^{(-1,0)}(u).
\end{equation}
Note this contains all three reference point singularities $z^{(j)}$ ($j=0,\pm1$) as well as a full neighbourhood of the turning point $z_{0}$.
\begin{theorem}
\label{ThmG}
There exists a function $\mathcal{G}_{m}(u,z)$ which analytic in $Z$ and bounded in $Z^{(\mathcal{G})}(u)$, such that the three fundamental solutions given by \cref{thm6} can be expressed in the form
\begin{multline}
\label{eq55}
 w^{(j,k)}(u,z)=\gamma_{m}(u)\left\{\mathrm{Wi}^{(j,k)}\left(u^{2/3}\zeta  \right)\mathcal{A}_{2m+2}(u,z) \right. \\ + \left. {\mathrm{Wi}^{(j,k)}}^{\prime}\left(u^{2/3}\zeta \right)\mathcal{B}_{2m+2}(u,z) \right\} +\mathcal{G}_{m}(u,z) \quad (j,k=0,\pm1, \, j<k).
\end{multline}
Moreover
\begin{multline}
\label{eq55e}
 \partial w^{(j,k)}(u,z) / \partial z
 =\gamma_{m}(u)\left\{\mathrm{Wi}^{(j,k)}\left(u^{2/3}\zeta  \right)\mathcal{C}_{2m+2}(u,z) \right. \\ + \left. {\mathrm{Wi}^{(j,k)}}^{\prime}\left(u^{2/3}\zeta \right)\mathcal{D}_{2m+2}(u,z) \right\} +\mathcal{H}_{m}(u,z) \quad (j,k=0,\pm1, \, j<k),
\end{multline}
where
\begin{equation}
\label{eq55f}
 \mathcal{C}_{2m+2}(u,z)
 =
 u^{4/3} (\zeta f(z))^{1/2}
 \mathcal{B}_{2m+2}(u,z) +
 \partial \mathcal{A}_{2m+2}(u,z) / \partial z
\end{equation}
\begin{equation}
\label{eq55g}
\mathcal{D}_{2m+2}(u,z)
=
u^{2/3} (f(z)/\zeta)^{1/2}
\mathcal{A}_{2m+2}(u,z) 
+ \partial \mathcal{B}_{2m+2}(u,z) / \partial z
\end{equation}
and \footnote{In \cite{Dunster:2020:ASI} $u^{-2/3}$ should read $u^{2/3}$}
\begin{equation}
\label{eq55h}
 \mathcal{H}_{m}(u,z)
 =\partial \mathcal{G}_{m}(u,z)/ \partial z
 +u^{2/3} \gamma_{m}(u) (f(z)/\zeta)^{1/2}
 \mathcal{B}_{2m+2}(u,z)
\end{equation}
\end{theorem}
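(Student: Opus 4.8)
The plan is to define $\mathcal{G}_m$ by subtraction and then verify the three asserted properties in turn. Writing $\tilde{w}^{(j,k)}(u,z)=\mathrm{Wi}^{(j,k)}(u^{2/3}\zeta)\mathcal{A}_{2m+2}(u,z)+{\mathrm{Wi}^{(j,k)}}'(u^{2/3}\zeta)\mathcal{B}_{2m+2}(u,z)$ for the Scorer combination appearing in (\ref{eq55}), I set $\mathcal{G}_m(u,z):=w^{(j,k)}(u,z)-\gamma_m(u)\tilde{w}^{(j,k)}(u,z)$ and must show (a) this is independent of the pair $(j,k)$, (b) it continues analytically to all of $Z$, and (c) it is bounded on $Z^{(\mathcal{G})}(u)$. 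The guiding observation is that the linear map $L:W\mapsto W(u^{2/3}\zeta)\mathcal{A}_{2m+2}+W'(u^{2/3}\zeta)\mathcal{B}_{2m+2}$ carries, by (\ref{eq47}), each Airy solution $\mathrm{Ai}_j$ to the exact homogeneous solution $w_{m,j}$, while the same map applied to the Scorer functions produces the $\tilde{w}^{(j,k)}$.

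For (a) I would first establish the two companion connection formulas to (\ref{eq54}), namely $w^{(-1,1)}-w^{(-1,0)}=2\pi e^{\pi i/6}\gamma_m(u)w_{m,-1}$ and $w^{(0,1)}-w^{(-1,0)}=2\pi i\,\gamma_m(u)w_{m,0}$, governed by the \emph{same} coefficient $\gamma_m(u)$. The difference of any two particular solutions solves the homogeneous equation (\ref{odehomog}); letting $z$ tend to the reference point common to the two domains of boundedness (namely $z^{(-1)}$ and $z^{(0)}$) forces the difference to be a multiple of the unique solution recessive there, i.e.\ of $w_{m,-1}$ and $w_{m,0}$ respectively. The three multiples are linked by telescoping the differences, and are then pinned down in terms of $\gamma_m(u)$ by imposing the homogeneous identity (\ref{eq52}) with the normalization (\ref{eq54}) and matching coefficients of the independent $w_{m,1},w_{m,-1}$. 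With all three $w$-connection formulas in hand, independence of $(j,k)$ is immediate: $\mathcal{G}_m^{(j,k)}-\mathcal{G}_m^{(j',k')}=\left[w^{(j,k)}-w^{(j',k')}\right]-\gamma_m(u)\,L\!\left(\mathrm{Wi}^{(j,k)}-\mathrm{Wi}^{(j',k')}\right)$, and the Scorer connection formulas (\ref{eq23}), (\ref{eq24}), (\ref{eq26}) express each $\mathrm{Wi}^{(j,k)}-\mathrm{Wi}^{(j',k')}$ as exactly the constant multiple of $\mathrm{Ai}_l$ that makes $\gamma_m(u)L(\cdot)$ cancel the corresponding $w$-difference.

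For (b) I would exploit that all three Scorer functions satisfy the \emph{same} inhomogeneous Airy equation ${\mathrm{Wi}}''(X)-X\,\mathrm{Wi}(X)=1$ and differ only by Airy solutions. Since $L$ maps Airy solutions to exact homogeneous solutions of (\ref{odehomog}), each $\tilde{w}^{(j,k)}=L(\mathrm{Wi}^{(j,k)})$ satisfies one and the same inhomogeneous equation $\tilde{w}''-\{u^2f(z)+g(z)\}\tilde{w}=\rho(u,z)$, whose forcing $\rho$ depends only on the constant right-hand side $1$ and on the coefficient functions, hence is independent of $(j,k)$ and analytic throughout $Z$ (the apparent singularities of $X'=u^{2/3}(f/\zeta)^{1/2}$ and of $\mathcal{A}_{2m+2},\mathcal{B}_{2m+2}$ at $z_0$ being removable, as recorded in \cref{homogeneous}). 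Consequently $\mathcal{G}_m$ solves the single linear equation $\mathcal{G}_m''-\{u^2f(z)+g(z)\}\mathcal{G}_m=p(z)-\gamma_m(u)\rho(u,z)$, which has analytic coefficients and analytic right-hand side on all of $Z$, in particular at the ordinary point $z_0$. By (a), $\mathcal{G}_m$ is a well-defined analytic solution on the open set $Z^{(\mathcal{G})}(u)\setminus\{z_0\}$ and is bounded near $z_0$, so Riemann's removable-singularity theorem makes it analytic at $z_0$, and analytic continuation of solutions of this equation extends it analytically to all of $Z$.

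Property (c) follows on each $Z^{(j,k)}(u)$ from the defining boundedness of $w^{(j,k)}$ there, the boundedness of $\mathrm{Wi}^{(j,k)}(u^{2/3}\zeta)$ on $\mathbf{T}_j\cup\mathbf{T}_k$, and that of the slowly varying $\mathcal{A}_{2m+2},\mathcal{B}_{2m+2}$, with the turning-point neighbourhood covered by the analyticity from (b); taking the union over the three pairs gives boundedness on $Z^{(\mathcal{G})}(u)$. Finally, the derivative identities (\ref{eq55e})--(\ref{eq55h}) are obtained by differentiating (\ref{eq55}) in $z$, using $d\zeta/dz=(f(z)/\zeta)^{1/2}$ from (\ref{xi}) together with ${\mathrm{Wi}^{(j,k)}}''(u^{2/3}\zeta)=u^{2/3}\zeta\,\mathrm{Wi}^{(j,k)}(u^{2/3}\zeta)+1$, and collecting the coefficients of $\mathrm{Wi}^{(j,k)}$ and ${\mathrm{Wi}^{(j,k)}}'$; this is a routine computation. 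I expect the main obstacle to be step (a): showing that a \emph{single} connection coefficient $\gamma_m(u)$ controls all three differences of the $w^{(j,k)}$, which rests on the exact compatibility between (\ref{eq52}) and the Scorer connection formulas and is precisely what makes $\mathcal{G}_m$ well defined.
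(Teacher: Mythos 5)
Your proof is correct, and the core decomposition (define $\mathcal{G}_{m}$ by subtracting the Scorer combination from one $w^{(j,k)}$, then transfer to the other pairs via the Scorer connection formulas (\ref{eq23})--(\ref{eq26}) and the fact that $\mathrm{Ai}_{j}\mapsto w_{m,j}$ under the map $L$) is exactly the paper's. The one genuine structural difference is the logical order in which the $w$-connection formulas are obtained. You derive all three formulas (\ref{eq54}), (\ref{eq60}), (\ref{eq61}) \emph{first}, by writing each difference $w^{(j,k)}-w^{(j',k')}$ as a multiple of the homogeneous solution recessive at the shared reference point, telescoping, and matching coefficients against (\ref{eq52}); only then do you conclude that $\mathcal{G}_{m}$ is independent of the pair. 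The paper instead defines $\mathcal{G}_{m}$ once and for all via the pair $(-1,1)$, gets the $(0,1)$ case from the already-established (\ref{eq54}), and for the $(-1,0)$ case invokes the \emph{uniqueness} of $w^{(-1,0)}$ as the particular solution bounded at both $z^{(0)}$ and $z^{(-1)}$ (\cref{thm4}); the remaining connection formulas (\ref{eq60}) and (\ref{eq61}) then fall out as corollaries (\cref{thmconnection}) rather than being inputs. Both routes are sound; yours requires the linear independence of $w_{m,0}$ and $w_{m,1}$ (true, but used tacitly), while the paper's leans on the uniqueness clause of \cref{thm6} and is slightly shorter. Two smaller remarks: your analyticity argument in step (b), via the inhomogeneous ODE satisfied by $\mathcal{G}_{m}$ and a removable-singularity appeal at $z_{0}$, is more machinery than needed --- the defining expression (\ref{eq55a}) is already a finite combination of functions analytic in $Z$ (in particular $\mathcal{A}_{2m+2}$ and $\mathcal{B}_{2m+2}$ are analytic at the turning point, as recorded in \cref{homogeneous}), so analyticity is immediate; and your treatment of the derivative identities agrees with the paper's, which likewise differentiates (\ref{eq55}) using $d\zeta/dz=\{f(z)/\zeta\}^{1/2}$ and (\ref{eq27}).
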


\begin{proof}
We begin by defining $\mathcal{G}_{m}(u,z)$ by (\ref{eq55}) for the values $(j,k)=(-1,1)$, so that
\begin{multline}
\label{eq55a}
\mathcal{G}_{m}(u,z):= w^{(-1,1)}(u,z) \\
-\gamma_{m}(u)\left\{\mathrm{Wi}^{(-1,1)}\left(u^{2/3}\zeta  \right)\mathcal{A}_{2m+2}(u,z) + {\mathrm{Wi}^{(-1,1)}}^{\prime}\left(u^{2/3}\zeta \right)\mathcal{B}_{2m+2}(u,z) \right\}.
\end{multline}
This of course establishes the stated analyticity in $Z$.

Now from (\ref{eq23}) and (\ref{eq55a})
\begin{multline}
\label{eq56}
 w^{(-1,1)}(u,z) 
-2\pi e^{-\pi i/6}\gamma_{m}(u)w_{m,1}(u,z) \\
 =\gamma_{m}(u)\left\{\mathrm{Wi}^{(0,1)}\left( 
u^{2/3}\zeta \right)\mathcal{A}_{2m+2}(u,z)
+{\mathrm{Wi}^{(0,1)}}' \left(u^{2/3}\zeta \right)\mathcal{B}_{2m+2}(u,z) \right\} +\mathcal{G}_{m}(u,z).
\end{multline}
Then on appealing to (\ref{eq54}) we see (\ref{eq55}) holds for $(j,k)=(0,1)$. Moreover, since (\ref{eq55}) holds for both $(j,k)=(-1,1)$ and $(j,k)=(0,1)$ it follows that $\mathcal{G}_{m}(u,z)$ is bounded in $Z^{(-1,1)}(u)\cup Z^{(0,1)}(u)$, since all the other functions in (\ref{eq55}) are bounded in $Z^{(j,k)}(u)$. In particular $\mathcal{G}_{m}(u,z)$ is bounded as $z \rightarrow z^{(j)}$ for $j=0,\pm1$.

Next from (\ref{eq24}) and (\ref{eq55a}) we have
\begin{multline}
\label{eq58}
 w^{(-1,1)}(u,z)-2\pi e^{\pi i/6}\gamma_{m}(u)w_{m,-1}(u,z) \\ 
 =\gamma_{m}(u)\left\{\mathrm{Wi}^{(-1,0)}\left(u^{2/3}\zeta 
\right)\mathcal{A}_{2m+2}(u,z)
+{\mathrm{Wi}^{(-1,0)}}'\left(
u^{2/3}\zeta \right)\mathcal{B}_{2m+2}(u,z) \right\} \\ +\mathcal{G}_{m}(u,z).
\end{multline}
Now the LHS is a solution of the inhomogeneous equation (\ref{eq1}), and hence of course so too is the RHS. But all the functions appearing on the RHS are also seen to be bounded as $z \rightarrow z^{(j)}$ for both $j=0$ and $j=-1$, and hence by uniqueness it must be equal to $w^{(-1,0)}(u,z)$. This establishes (\ref{eq55}) for the final value $(j,k)=(-1,0)$, and consequently $\mathcal{G}_{m}(u,z)$ is also bounded in $Z^{(-1,0)}(u)$.

Finally (\ref{eq55e}) - (\ref{eq55h}) come from differentiating (\ref{eq55}) with respect to $z$, using $d\zeta/ dz=\{f(z)/\zeta\}^{1/2}$ (see (\ref{xi})), and invoking (\ref{eq27}).
\end{proof}

\begin{theorem}
\label{thmconnection}
Connection formulas are given by (\ref{eq54}) along with
\begin{equation}
\label{eq60}
w^{(-1,0)}(u,z)=w^{(-1,1)}(u,z)-2\pi e^{\pi i/6}\gamma_{m}(u)w_{m,-1}(u,z),
\end{equation}
and
\begin{equation}
\label{eq61}
w^{(-1,0)}(u,z)=w^{(0,1)}(u,z)-2\pi i\gamma_{m}(u)w_{m,0}(u,z).
\end{equation}
\end{theorem}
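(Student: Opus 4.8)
The plan is to establish \cref{thmconnection} as an essentially immediate consequence of the connection formulas already derived for the Scorer functions together with the structural result of \cref{ThmG}. The key observation is that \cref{ThmG} gives a single formula (\ref{eq55}) for all three solutions $w^{(j,k)}(u,z)$ in terms of the \emph{same} function $\mathcal{G}_m(u,z)$ and the \emph{same} pair of coefficient functions $\mathcal{A}_{2m+2}$ and $\mathcal{B}_{2m+2}$, with only the Scorer function $\mathrm{Wi}^{(j,k)}$ differing between the three cases. Therefore any linear relation among the $\mathrm{Wi}^{(j,k)}$ transfers directly to a linear relation among the $w^{(j,k)}$, modulo homogeneous solutions.

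First I would record that (\ref{eq54}) has already been established prior to \cref{ThmG}, so only (\ref{eq60}) and (\ref{eq61}) require proof. For (\ref{eq60}), I would subtract the $(j,k)=(-1,0)$ instance of (\ref{eq55}) from the $(j,k)=(-1,1)$ instance; the $\mathcal{G}_m(u,z)$ terms cancel and, since both instances share $\mathcal{A}_{2m+2}$ and $\mathcal{B}_{2m+2}$, the difference is governed entirely by $\mathrm{Wi}^{(-1,1)}-\mathrm{Wi}^{(-1,0)}$ and its derivative evaluated at $u^{2/3}\zeta$. The connection formula (\ref{eq24}) gives $\mathrm{Wi}^{(-1,1)}(z)-\mathrm{Wi}^{(-1,0)}(z)=2\pi e^{\pi i/6}\mathrm{Ai}_{-1}(z)$, and differentiating it gives the corresponding relation for the derivatives. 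Substituting $z=u^{2/3}\zeta$ and recalling the definition (\ref{eq47}) of $w_{m,-1}(u,z)=\mathrm{Ai}_{-1}(u^{2/3}\zeta)\mathcal{A}_{2m+2}(u,z)+\mathrm{Ai}'_{-1}(u^{2/3}\zeta)\mathcal{B}_{2m+2}(u,z)$ collapses the difference exactly into $2\pi e^{\pi i/6}\gamma_m(u)w_{m,-1}(u,z)$, which is (\ref{eq60}).

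For (\ref{eq61}) I would argue in the same fashion, this time subtracting the $(j,k)=(-1,0)$ and $(j,k)=(0,1)$ instances of (\ref{eq55}) and using the connection formula (\ref{eq26}), namely $\mathrm{Wi}^{(0,1)}(z)-\mathrm{Wi}^{(-1,0)}(z)=2\pi i\,\mathrm{Ai}(z)$, together with its derivative. The factor $2\pi i$ then reassembles with the definition of $w_{m,0}(u,z)$ to yield the stated coefficient $-2\pi i\gamma_m(u)$, with the sign determined by the order of subtraction. Since each step is purely algebraic substitution, there is no genuine analytic obstacle here; the main point requiring care is simply bookkeeping of the factors $e^{\pm\pi i/6}$, $2\pi i$, and the branch of $u^{2/3}\zeta$, ensuring that the Scorer connection formulas (\ref{eq23})--(\ref{eq26}) are applied with the correct argument and sign so that the emergent homogeneous solution is exactly $w_{m,j}(u,z)$ rather than a scalar multiple thereof. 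It is worth noting that (\ref{eq54}), (\ref{eq60}), and (\ref{eq61}) are not independent: any two of them, combined with the homogeneous relation (\ref{eq52}), imply the third, which provides a useful internal consistency check on the constants.
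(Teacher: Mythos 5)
Your proposal is correct, and for (\ref{eq60}) it coincides with the paper's argument: the paper's equation (\ref{eq58}) is exactly your subtraction of the $(-1,0)$ and $(-1,1)$ instances of (\ref{eq55}) via the Scorer connection formula (\ref{eq24}), so identifying its right-hand side with $w^{(-1,0)}(u,z)$ gives (\ref{eq60}) at once. For (\ref{eq61}) you take a marginally different but equivalent route, applying (\ref{eq26}) directly to (\ref{eq55}), whereas the paper instead combines (\ref{eq52}), (\ref{eq54}) and (\ref{eq60}); this is precisely the redundancy you observe in your closing remark, so the two derivations are interchangeable and equally short.
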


\begin{proof}
From (\ref{eq55}) for $(j,k)=(-1,0)$ and (\ref{eq58}) we get (\ref{eq60}). From (\ref{eq52}), (\ref{eq54}) and (\ref{eq60}) we get (\ref{eq61}).
\end{proof}
\begin{remark}
These connection formulas allow the extension of the asymptotic approximations of \cref{away} to regions where the solutions are exponentially large. Moreover they can be used to improve accuracy in regions where the slowly-varying expansions begin to break down, as we will discuss in \cref{secpoly}.
\end{remark}

\subsection{Computation of $\gamma_{m}(u)$ and $\mathcal{G}_{m}(u,z)$}
We next address the issue of computation of the approximations given by (\ref{eq55}). The coefficient functions $\mathcal{A}_{2m+2}(u,z)$ and $\mathcal{B}_{2m+2}(u,z)$ are computable by (\ref{eq48}) - (\ref{eq51}), and we shall use similar methods to compute our new coefficient function $\mathcal{G}_m(u,z)$. 

Before showing how to do this, consider the the connection coefficient $\gamma_{m}(u)$; if isn't known explicitly then we can approximate it as follows. Let $\Gamma$ be a simple positively orientated loop which encloses the turning point $z_{0}$ and which lies in $Z^{(\mathcal{G})}(u)$. Then by the Cauchy-Goursat theorem
\begin{equation}
\label{eq62}
0=\oint_{\Gamma} {\mathcal{G}_{m}(u,z)dz}
=\sum\limits_{j,k} \int_{\Gamma^{(j,k)}(u)}
{\mathcal{G}_{m}(u,z)dz},
\end{equation}
where we have broken $\Gamma$ into the union of three paths, with $0\leq \arg(u^{2/3}\zeta) \leq 2 \pi/3$ for  $\Gamma^{(0,1)}(u)$, $-2 \pi/3 \leq \arg(u^{2/3}\zeta) \leq 0$ for  $\Gamma^{(-1,0)}(u)$, and $|\arg(-u^{2/3}\zeta)| \leq \pi/3$ for  $\Gamma^{(-1,1)}(u)$. Note $\Gamma^{(j,k)}(u) \in Z^{(j,k)}(u)$ for all three.

Next we solve (\ref{eq55}) for $\mathcal{G}_{m}(u,z)$ to give three expressions for it, and plug these into the three integrals in the sum of (\ref{eq62}) for the corresponding values of $(j,k)$. As a result we get
\begin{multline}
\label{eq63}
 \gamma_{m}(u)\sum\limits_{j,k} \int_{\Gamma^{(j,k)}(u)} \left\{\mathrm{Wi}^{(j,k)}\left(u^{2/3}\zeta \right)\mathcal{A}_{2m+2}(u,z) \right. \\
 \left. +{\mathrm{Wi}^{(j,k)}}'\left(u^{2/3}\zeta \right)\mathcal{B}_{2m+2}(u,z) \right\}dz =\sum\limits_{j,k} \int_{\Gamma^{(j,k)}(u)} w^{(j,k)}(u,z)dz.
\end{multline}

The key now is that each of the three functions $\mathrm{Wi}^{(j,k)}(u^{2/3}\zeta)$ has the same asymptotic expansion as the other two on its path $\Gamma^{(j,k)}(u)$, with the only differences being the error terms. The same is true for each of $w^{(j,k)}(u,z)$, and this is the reason why we split the integral in (\ref{eq62}) as we did.

Consequently, from (\ref{eq48}), (\ref{eq49}), (\ref{eq42}), (\ref{eq28}), and (\ref{eq29}), and recombining the three paths into their parent $\Gamma$, we arrrive at our desired expansion
\begin{equation}
\label{eq64}
u^{4/3}\gamma_{m}(u)=
\sum\limits_{s=0}^m 
\oint_{\Gamma}\frac{\hat{G}_{s}(z)dz}{u^{2s}}
\left[\oint_{\Gamma} {\frac{1}{\zeta }\left\{\frac{\zeta }{f(z)} \right\}^{1/4}J_{m}(u,z)dz}\right]^{-1}
+\mathcal{O}\left( \frac{1}{u^{2m+2}} \right),
\end{equation}
where
\begin{multline}
\label{eq65}
 J_{m}(u,z)=-\exp \left\{\sum\limits_{s=1}^{m} \frac{\tilde{\mathcal{E}}_{2s}(z)}{u^{2s}} \right\}\cosh \left\{\sum\limits_{s=0}^{m} 
\frac{\tilde{\mathcal{E}}_{2s+1}(z)}{u^{2s+1}}  
\right\}\sum\limits_{k=0}^{m} \frac{(3k)!}{k!\left( 3u^{2}\zeta^{3} \right)^{k}} \\ 
 +\frac{1}{u\zeta^{3/2}}\exp \left\{\sum\limits_{s=1}^m \frac{{\mathcal{E}}_{2s}(z)}{u^{2s}}  \right\}\sinh \left\{\sum\limits_{s=0}^{m} 
\frac{{\mathcal{E}}_{2s+1}(z)}{u^{2s+1}}  \right\}\sum\limits_{k=0}^{m}
\frac{(3k+1)!}{k!\left(3u^{2}\zeta^{3} \right)^{k}}.
\end{multline}

In (\ref{eq64}) an error bound for the $\mathcal{O}$ term can be constructed using those associated with the referenced expansions used in its construction. This involves taking the supremum of these bounds over the paths $\Gamma^{(j,k)}(u)$. Although the derivation is straightforward the result is somewhat unwieldy, so we omit details.

The integrals involving $\hat{G}_{s}(z)$ in (\ref{eq64}) can be readily computed via the trapezoidal rule or other numerical methods for contour integrals. The same is true for the one involving $J_m(u,z)$. However for the former it is simpler than that, since each $\hat{G}_{s}(z)$ generally has a pole at the turning point $z=z_{0}$ (of order $3s+1$ if $p(z_{0})\neq 0$). Hence we can use the exact expression
\begin{equation}
\label{eq65c}
    \oint_{\Gamma} {\hat{G}_{s}(z)dz} =2\pi i \underset{z=z_{0}}{\operatorname{Res}}\left\{  \hat{G}_{s}(z) \right\}.
\end{equation}
For example, from (\ref{eq43})
\begin{equation}
\label{eq65d}
    \oint_{\Gamma} {\hat{G}_{0}(z)dz} =
    -\frac{2\pi i p(z_{0})}{{f}'(z_{0})}.
\end{equation}

Next consider the computation of $\mathcal{G}_{m}(u,z)$. Of course if $z$ is not close to the turning point we do not need to evaluate it, since in this case the fundamental solutions $w^{(j,k)}(u,z)$ can be approximated by \cref{thm6}. That being said, it does have the following expansion which is uniformly valid for $z \in Z^{(\mathcal{G})}(u)$ and $z$ bounded away from $z_0$
\begin{equation}
\label{eq66}
\mathcal{G}_{m}(u,z)=\frac{1}{u^{2}}\sum\limits_{s=0}^m \frac{\hat{G}_{s} 
(z)}{u^{2s}} -\frac{\gamma_{m}(u)J_{m}(u,z)}{u^{2/3}\zeta }\left\{\frac{\zeta }{f(z)}\right\}^{1/4}+\mathcal{O}\left(\frac{1}{u^{2m+4}} \right).
\end{equation}
This follows from (\ref{eq48}), (\ref{eq49}), (\ref{eq42}), (\ref{eq28}), (\ref{eq29}) and (\ref{eq55}).

We can then use this in the Cauchy integral formula
\begin{equation}
\label{eq67}
\mathcal{G}_{m}(u,z)=\frac{1}{2\pi i}\oint_{\Gamma } 
\frac{\mathcal{G}_{m}(u,t)dt}{t-z},
\end{equation}
where $\Gamma$ is as above and $z$ lies in its interior. As a result we arrive at our desired expansion
\begin{multline}
\label{eq68}
\mathcal{G}_{m}(u,z)=\frac{1}{2\pi iu^{2}}\sum\limits_{s=0}^m
\frac{1}{u^{2s}}\oint_{\Gamma} 
{\frac{\hat{G}_{s}(t)dt}{t-z}}  \\ 
 -\frac{\gamma_{m}(u)}{2\pi iu^{2/3}}\oint_{\Gamma} {\left\{ {\frac{\zeta(t)}{f(t)}} \right\}^{1/4}\frac{J_{m}(u,t)dt}{\zeta(t)(t-z)}} +\mathcal{O}\left( \frac{1}{u^{2m+4}} \right),
\end{multline}
which can be used for $z$ in a neighbourhood of the turning point. Again an error bound for the $\mathcal{O}$ term can be constructed in a manner similar to the one obtainable for (\ref{eq64}).

The integrals can be computed similarly to those in (\ref{eq64}). In particular, as  observed above, each $\hat{G}_{s}(z)$ generally has a pole at the turning point. Therefore the following generalisation of Cauchy's integral formula can aid in the computation of the loop integrals involving $\hat{G}_{s}(t)$ in (\ref{eq68}).
\begin{theorem} 

\label{thm:cauchy} 

Let $C$ be a positively orientated simple loop in the $z$ plane, and $G(z)$ be a function that is analytic in the open region enclosed by the path and continuous on its closure, except for a pole of arbitrary order $p$ at an interior point $z=z_{0}$. Let $\left\{ g_{j}\right\}_{j=-\infty }^{\infty }$ be the Laurent coefficients of $G(z)$ at $z=z_{0}$, so that for some $r_{0}>0$ 
\begin{equation}
G(z) =\sum_{j=-p}^{\infty }g_{j}\left( z-z_{0}\right) ^{j}\
\left( 0<\left\vert z-z_{0}\right\vert <r_{0}\right) ,  \label{2.9}
\end{equation}
and let $G^{\ast}(z)$ denote the regular (or analytic) part of $G(z)$ at $z=z_{0}$, given by 
\begin{equation}
G^{\ast}(z) =\sum_{j=0}^{\infty }g_{j}\left( z-z_{0}\right)
^{j}\ \left( 0\leq \left\vert z-z_{0}\right\vert <r_{0}\right) .
\label{2.10}
\end{equation}
Then for all $z$ lying inside $C$ 
\begin{equation}
\oint_{C}\frac{G(t)}{t-z}dt{=2\pi iG}^{\ast}(z).  
\label{2.11}
\end{equation}

\end{theorem}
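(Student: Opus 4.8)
The plan is to split $G$ into its principal part and its regular part at $z_{0}$ and to handle the two pieces separately. Writing the principal part as
\[
P(z)=\sum_{k=1}^{p}\frac{g_{-k}}{(z-z_{0})^{k}},
\]
the function $G^{\ast}(z)=G(z)-P(z)$ agrees near $z_{0}$ with the convergent power series (\ref{2.10}), so the pole is cancelled and $G^{\ast}$ is analytic throughout the open region bounded by $C$ and continuous on its closure; note that since $z_{0}$ is an interior point, $P$ is itself analytic on $C$, so no boundary behaviour is lost in forming $G^{\ast}$. First I would apply the Cauchy integral formula to $G^{\ast}$ to obtain $\frac{1}{2\pi i}\oint_{C}\frac{G^{\ast}(t)}{t-z}\,dt=G^{\ast}(z)$ for every $z$ interior to $C$.

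It then remains to show that the principal part contributes nothing, that is, $\oint_{C}\frac{P(t)}{t-z}\,dt=0$. By linearity this reduces to proving $\oint_{C}\frac{dt}{(t-z_{0})^{k}(t-z)}=0$ for each integer $k\geq1$. The integrand is a rational function of $t$ whose only finite poles, at $t=z_{0}$ and at $t=z$, both lie inside $C$; since it decays like $t^{-(k+1)}$ with $k+1\geq2$ as $t\to\infty$, its residue at infinity vanishes, and hence the sum of its finite residues is zero. The residue theorem then gives the asserted vanishing. The degenerate case $z=z_{0}$ is immediate, the integrand then possessing a single pole of order $k+1\geq2$ with zero residue. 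Combining the two pieces yields $\oint_{C}\frac{G(t)}{t-z}\,dt=2\pi i\,G^{\ast}(z)$, which is (\ref{2.11}).

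The only point requiring genuine care is the appeal to the Cauchy integral formula in the first step, because $G^{\ast}$ is assumed analytic merely in the open interior and continuous on the closure, rather than holomorphic on a full neighbourhood of the closed region. I would supply this by the standard shrinking-contour argument: apply the usual formula on a family of interior loops $C_{\rho}$ approaching $C$, and pass to the limit using the uniform continuity of $G^{\ast}$ on the compact region enclosed by $C$ to control the integrals and their limit. This is the main obstacle, though it is routine; the remainder of the argument is elementary residue bookkeeping.
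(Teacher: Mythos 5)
Your proof is correct and follows essentially the same route as the paper: decompose $G$ into its principal part and its regular part $G^{\ast}$, apply the Cauchy integral formula (for a function analytic inside and continuous on the closure) to $G^{\ast}$, and show the principal part contributes nothing. The only difference is cosmetic: you dispose of the terms $\oint_{C}(t-z_{0})^{-k}(t-z)^{-1}dt$ by noting the residue at infinity vanishes so the finite residues cancel, whereas the paper deforms to a circle $\left\vert t-z_{0}\right\vert =R$ and lets $R\rightarrow \infty$ in an explicit bound --- two phrasings of the same fact.
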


\begin{proof} 

We split the LHS of (\ref{2.11}) in the form 
\begin{equation}
\oint_{C}\frac{G(t)}{t-z}dt{=}I_{1}(z)+I_{2}(z),  
\label{2.12}
\end{equation}
where 
\begin{equation}
I_{1}(z)=\oint_{C}\frac{G(t)-G^{\ast}(t)}{t-z}dt,  
\label{2.13}
\end{equation}
and 
\begin{equation}
I_{2}(z)=\oint_{C}\frac{G^{\ast}(t)}{t-z}dt.  
\label{2.13a}
\end{equation}

Now from (\ref{2.9}) and (\ref{2.10}) 
\begin{equation}
G(z)-G^{\ast}(z)=\sum_{j=1}^{p}\frac{g_{-j}}{\left( z-z_{0}\right) ^{j}},
\label{2.14}
\end{equation}
which is analytic for $0<\left\vert z-z_{0}\right\vert <\infty $. Therefore
we can deform the contour in $I_{1}(z)$ to the circle $\left\vert t-z_{0}\right\vert =R>0$ which contains $t=z$, but where $R$ can otherwise
be arbitrarily chosen. Thus from (\ref{2.13}) and (\ref{2.14}) 
\begin{equation}
I_{1}(z)=\sum_{j=1}^{p}\oint_{\left\vert t-z_{0}\right\vert =R}\frac{g_{-j}dt
}{\left( t-z_{0}\right) ^{j}(t-z) }.  
\label{2.15}
\end{equation}

Next assuming $R>|z-z_{0}| $ we have for each $t$ lying on the circle $|t-z_{0}| =R$ that $|t-z| =|(t-z_{0})-(z_{0}-z)| \geq \left\vert
|t-z_{0}| - |z_{0}-z| \right\vert
=R-|z-z_{0}| $ ($>0$). Hence with the triangle
inequality we deduce that 
\begin{equation}
\left\vert I_{1}(z)\right\vert \leq \frac{2\pi pM}{R-|z-z_{0}| }=\mathcal{O}\left( \frac{1}{R}\right) \quad (R\rightarrow \infty ),
\label{2.16}
\end{equation}
where $M=\max_{1\leq j\leq p}|g_{-j}|$. Since $I_{1}(z)$ is independent of $R
$ we conclude it is identically zero, and hence from (\ref{2.12}) 
\begin{equation}
\oint_{C}\frac{G(t)}{t-z}dt=I_{2}(z).  
\label{2.17}
\end{equation}

Finally $G^{\ast}(z)$ is analytic at every point in the open region enclosed by $C$, and continuous on its closure. Hence from (\ref{2.13a})\ and Cauchy's integral formula we conclude that $I_{2}(z)=2\pi iG^{\ast}(z)$, and hence (\ref{2.11}) follows from (\ref{2.17}). 
\end{proof}

It follows from \cref{thm:cauchy} that in (\ref{eq68}) we can use
\begin{equation}
\label{eq68a}
\frac{1}{2 \pi i}\oint_{\Gamma} 
{\frac{\hat{G}_{s}(t)dt}{t-z}} =
\hat{G}_{s}^\ast(z),
\end{equation}
where $\hat{G}_{s}^\ast(z)$ is the regular (analytic) part of $\hat{G}_{s}(z)$ at the turning point. For example,
\begin{equation}
\hat{G}_{0}^\ast(z)=\frac{p\left(z_{0}\right)}
{f'\left(z_{0}\right)\left(z-z_{0}\right) }-\frac{p(z)}{f(z)}.
\end{equation}

The function $J_m(u,z)$ typically has an essential singularity at the turning point, and \cref{thm:cauchy} can be generalised to accommodate this. However, it is generally difficult to compute the required regular part of $J_m(u,z)$. Nonetheless, the contour integrals involving this function can be directly computed to high accuracy using the trapezoidal method since it is slowly varying and analytic on the closed contour (see \cite{Bornemann:2011:AAS} for a general discussion).

For the derivatives of the solutions in \cref{ThmG} one can modify the integrands of (\ref{eq50}) and (\ref{eq51}) appropriately for the terms involving $\mathcal{A}_{2m+2}(u,z)$ and $\mathcal{B}_{2m+2}(u,z)$ appearing in (\ref{eq55f}) - (\ref{eq55h}). For computing the derivatives of these functions and $\mathcal{G}_{m}(u,z)$ use (\ref{eq50}), (\ref{eq51}) and (\ref{eq68}) with $t-z$ in each denominator replaced by $(t-z)^2$. In doing so for (\ref{eq68}) we are able to utilise the differentiated version of (\ref{eq68a}), namely
\begin{equation}
\label{eq68b}
\frac{1}{2 \pi i}\oint_{\Gamma} 
{\frac{\hat{G}_{s}(t)dt}{(t-z)^2}} =
\hat{G}_{s}^{\ast \prime}(z).
\end{equation}

\section{Inhomogeneous Airy equation}
\label{sec6}
We apply the foregoing approximations to the inhomogenous Airy equation
\begin{equation}
\label{eq69}
\frac{d^{2}w}{dz^{2}}-u^{2}zw=p(z),
\end{equation}
where $u>0$. Evidently the Liouville transformation (\ref{xi}) is not necessary here, so throughout this section $z$ and $\zeta$ are equivalent.

Using the Wronskians (\ref{eq70}) and (\ref{eq71}), and the asymptotic behavior (\ref{Aiinfinity}) of the Airy function in the complex plane, we obtain by variation of parameters these exact representations of the three fundamental solutions
\begin{multline}
\label{eq72}
 w^{(-1,1)}(u,z)=2\pi i u^{-2/3} \left\{
 \mathrm{Ai}_{-1} \left(u^{2/3}z 
\right)\int_{\infty \exp (2\pi i/3)}^{z} {p(t)\mathrm{Ai}_{1} \left(u^{2/3}t 
\right)dt} \right. \\ 
 \left. -\mathrm{Ai}_{1} \left(u^{2/3}z \right)\int_{\infty \exp 
(-2\pi i/3)}^{z} {p(t)\mathrm{Ai}_{-1} \left(u^{2/3}t \right)dt} \right\},
\end{multline}
\begin{multline}
\label{eq73}
 w^{(-1,0)}(u,z)=2\pi e^{\pi i/6}u^{-2/3} \left\{
 \mathrm{Ai}_{-1} \left(u^{2/3}z 
\right)\int_{\infty}^{z} {p(t)\mathrm{Ai} \left(u^{2/3}t \right)dt} \right. \\ \left.
 -\mathrm{Ai} \left(u^{2/3}z 
\right)\int_{\infty \exp (-2\pi i/3)}^{z} {p(t)\mathrm{Ai}_{-1} \left( 
{u^{2/3}t} \right)dt}  \right\},
\end{multline}
and
\begin{multline}
\label{eq74}
 w^{(0,1)}(u,z)=2\pi e^{-\pi i/6}u^{-2/3}
 \left\{ 
 \mathrm{Ai}_{1} \left(u^{2/3}z 
\right)\int_{\infty}^{z} {p(t)\mathrm{Ai} \left(u^{2/3}t \right)dt} \right. \\  \left.
 -\mathrm{Ai} \left(u^{2/3}z
\right)\int_{\infty \exp (2\pi i/3)}^{z} {p(t)\mathrm{Ai}_{1} \left(u^{2/3}t 
\right)dt}
\right\}.
\end{multline}

For the case $p(z)$ equal to a constant these of course reduce to a constant multiple of the Scorer functions (\ref{eq22a}) - (\ref{eq22c}) of argument $u^{2/3}z$; see also (\ref{eq4}) and (\ref{eq28}).

Asymptotic approximations for various classes of forcing functions are obtainable from \cref{thm6} ($z$ bounded away from the turning point at 0) and \cref{ThmG} ($z$ in a domain containing 0). We shall consider two important cases, $p(z)$ being a polynomial or an exponential function.

\subsection{Polynomial forcing term}
\label{secpoly}
Here we consider $p(z)$ a general (non-constant) polynomial, so that for some integer $R\geq 1$
\begin{equation}
\label{eq75}
p(z)=\sum\limits_{r=0}^R {p_{r} z^{r}} \quad (p_{R}\neq0).
\end{equation}
We shall use:
\begin{lemma}
For any solution $y(z)$ of Airy's equation we have
\begin{equation}
\label{eq78}
\int zy(z)dz=y'(z),
\end{equation}
\begin{equation}
\label{eq79}
\int z^{2}y(z)dz=z y'(z)-y(z),
\end{equation}
and for $r=3,4,5,\cdots $
\begin{equation}
\label{eq81}
\int {z^{r}y(z)dz} =P_{r-2}(z)y(z)+Q_{r-1}(z)y'(z)+c_{r} \int {y(z)dz}.
\end{equation}
Here 
\begin{equation}
\label{eqcr}
  c_{r} =
  \begin{cases}
     (3k)!/\left( {3^{k}k!} \right) & (r=3k, \, k=1,2,3,\cdots) \\
     0 & (\emph{otherwise})
  \end{cases},
\end{equation}
and $P_{r}(z)$ and $Q_{r}(z)$ are polynomials of degree $r$ given by
\begin{equation}
\label{eq82}
P_{r}(z)=-Q'_{r+1}(z),
\end{equation}
and
\begin{equation}
\label{eq83}
Q_{r}(z)=\sum\limits_{j=0}^{\left\lfloor {r/3} \right\rfloor } {q_{r-3j} 
(r)z^{r-3j}} \quad (r=3,4,5,\cdots ),
\end{equation}
where $q_{r} (r)=1$, and working backwards
\begin{equation}
\label{eq84}
q_{j} (r)=(j+2)(j+3)q_{j+3} (r)\quad (j=r-3,r-6,\cdots ,j_{0} ),
\end{equation}
in which $j_{0} =r-3\left\lfloor {r/3} \right\rfloor =r\bmod 3$. 
\end{lemma}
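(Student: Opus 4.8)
The plan is to establish the two base formulas directly and then prove the general formula by a single integration-by-parts reduction that lowers the exponent $r$ by $3$, simultaneously producing the three recursions for $P$, $Q$ and $c_r$. First I would verify the base cases using Airy's equation $y''=zy$. The identity $\int zy\,dz=\int y''\,dz=y'$ is immediate, giving (\ref{eq78}); for (\ref{eq79}) I would write $z^{2}y=zy''$ and integrate by parts once to get $\int zy''\,dz=zy'-\int y'\,dz=zy'-y$.

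The engine of the proof is the following reduction. For $r\geq3$ I would write $z^{r}y=z^{r-1}y''$ and integrate by parts twice, invoking $y''=zy$ only at the very start, to obtain
\begin{equation*}
\int z^{r}y\,dz=z^{r-1}y'-(r-1)z^{r-2}y+(r-1)(r-2)\int z^{r-3}y\,dz.
\end{equation*}
Iterating in steps of $3$ terminates at $\int y\,dz$ when $3\mid r$, and at $y'=\int zy\,dz$ or $zy'-y=\int z^{2}y\,dz$ otherwise, which already proves that $\int z^{r}y\,dz$ has the shape (\ref{eq81}). Comparing this reduction with the ansatz (\ref{eq81}), together with its analogue with $r$ replaced by $r-3$, and matching the independent quantities $y$, $y'$ and $\int y\,dz$ yields the recursions $Q_{r-1}=z^{r-1}+(r-1)(r-2)Q_{r-4}$, $P_{r-2}=-(r-1)z^{r-2}+(r-1)(r-2)P_{r-5}$, and $c_{r}=(r-1)(r-2)c_{r-3}$.

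From here the closed forms follow. With seeds $c_{0}=1$, $c_{1}=c_{2}=0$ (read off from the three termination cases), the $c_r$ recursion telescopes to $c_{3k}=\prod_{m=1}^{k}(3m-1)(3m-2)=(3k)!/(3^{k}k!)$ and $c_{r}=0$ otherwise, which is (\ref{eqcr}). The relation $P_{r}=-Q'_{r+1}$ of (\ref{eq82}) I would prove by induction from the two displayed recursions, the step being $-Q'_{r-1}=-(r-1)z^{r-2}+(r-1)(r-2)(-Q'_{r-4})=P_{r-2}$. To reach the explicit coefficient recursion (\ref{eq84}), I would recast the $Q$ relation as the statement that $Q_{r-1}$ is the polynomial solution of $zQ-Q''=z^{r}-c_{r}$ (equivalently, differentiate the ansatz, demand that the $y'$-coefficient vanish and the $y$-coefficient equal $z^{r}$); substituting $Q_{r-1}=\sum_{j}q_{j}(r-1)z^{j}$ and matching coefficients of $z^{r-3m}$ reproduces (\ref{eq84}), while the lowest power supplies the constant $c_{r}$.

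I expect the only delicate point to be the bookkeeping that reconciles the two recursions for the $Q$-polynomials: the reduction gives a relation \emph{between} $Q_{r-1}$ and $Q_{r-4}$, whereas (\ref{eq84}) is a downward recursion \emph{within} a single polynomial. Matching like powers in the ODE $zQ_{r-1}-Q''_{r-1}=z^{r}-c_{r}$ is the cleanest way to close this gap, and it simultaneously confirms that the solvability obstruction at the lowest power is nonzero precisely when $3\mid r$, in agreement with (\ref{eqcr}).
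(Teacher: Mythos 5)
Your proof is correct and follows essentially the same route as the paper: establish the step-three reduction $\int z^{r}y\,dz=z^{r-1}y'-(r-1)z^{r-2}y+(r-1)(r-2)\int z^{r-3}y\,dz$ (which the paper cites from the DLMF and you derive by parts), infer the shape (\ref{eq81}) by iteration, then differentiate the ansatz to force $P_{r-2}=-Q_{r-1}'$ and the polynomial ODE $Q_{r-1}''-zQ_{r-1}=c_{r}-z^{r}$, whose coefficient matching gives (\ref{eq83})--(\ref{eq84}). Your explicit telescoping of $c_{r}=(r-1)(r-2)c_{r-3}$ to $(3k)!/(3^{k}k!)$ just fills in what the paper leaves as "inferred by induction."
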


\begin{remark}
This lemma can be used to give a general representation of the iterated integration of the Airy functions, but we do not give details since they are not applicable here. We mention that in \cite{Abramochkin:2018:HDO} similar expressions were derived for the higher derivatives of Airy functions in a closed form, which also involve the Airy function, its first derivative and certain polynomials.
\end{remark}

\begin{proof}
From \cite[§9.10(iii)]{NIST:DLMF} we have (\ref{eq78}), (\ref{eq79}) and for $n=0,1,2,\cdots$
\begin{equation}
\label{eq80}
\int z^{n+3}y(z)dz=-(n+2)z^{n+1}y(z)+z^{n+2}y'(z)+(n+1)(n+2)\int z^{n}y(z)dz.
\end{equation}
The form (\ref{eq81}) for polynomial $P_{r-2}(z)$ and $Q_{r-1}(z)$ of degree $r-2$ and $r-1$ respectively, and the value (\ref{eqcr}) of $c_{r}$, can be inferred from by induction from (\ref{eq78}), (\ref{eq79}) and (\ref{eq80}).

Next on differentiating (\ref{eq81}) we obtain
\begin{equation}
\label{84a}
    \left\{ P_{r-2}(z)  +Q_{r-1}'(z)
 \right\} y'(z)
 + \left\{ P_{r-2}'(z) + z Q_{r-1}(z) +c_{r}-z^{r} \right\} y(z)=0.
\end{equation}
Setting the coefficient of $y'(z)$ to zero gives (\ref{eq82}). Plugging this into the coefficient of $y(z)$ and setting this to zero yields
\begin{equation}
\label{84b}
    Q_{r-1}''(z)
  - z Q_{r-1}(z) -c_{r}+z^{r} =0.
\end{equation}
We next write 
\begin{equation}
\label{eq84c}
Q_{r-1}(z)=\sum\limits_{j=0}^{r-1} {q_{j}(r)z^{j}}
\end{equation}
insert this into (\ref{84b}), and collect like powers of $z$. As a result we find (on suppressing $r$ dependence on the $q_{j}$)
\begin{multline}
     \left( q_{r-1}-1 \right){x}^{r}
     +q_{r-1}x^{r-1}+q_{r-2}x^{r-2}
    \\
    +\sum _{j=1}^{r-3} 
    \left( q_{j-1}-(j+2)(j+1)q_{j+2} \right) {x}^{j}
    +c_{r}-2q_{2}=0.
\end{multline}
On equating the coefficients of each power of $z$ to zero, and replacing $r-1$ by $r$, yields (\ref{eq83}) and (\ref{eq84}).
\end{proof}

In (\ref{eq83}) the term of smallest power is $q_{j_{0}} (r) z^{j_{0}}$, and from its definition $j_{0} =0,1$ or 2. Furthermore, from an inductive argument it can readily be verified from (\ref{eq84}) that
\begin{equation}
\label{eq85}
q_{j} (r)=\frac{r!}{(j+1)!}\left\{ {\prod\limits_{k=0}^{\left\lfloor 
{(r-j)/3-2} \right\rfloor } {(r-2-3k)} } \right\}^{-1},
\end{equation}
where the product is unity if the upper limit is negative.

The first few $Q$ polynomials are given by
\begin{multline}
\label{eq86}
 Q_{1}(z)=z,\;Q_{2}(z)=z^{2},\;Q_{3}(z)=z^{3}+6,\;  
 Q_{4}(z)=z^{4}+12z, \\
 Q_{5}(z)=z^{5}+20z^{2},\;Q_{6}(z)=z^{6}+30z^{3}+180. 
\end{multline}

Let us now apply \cref{gz} to (\ref{eq75}), and on identifying this equation with (\ref{eq1}) we see that $f(z)=z$, $g(z)=0$. Hence from (\ref{eq5}) we find that $\Phi(z)=-\frac{5}{16}z^{-3}$. Since $p(z)$ is entire and $\Phi(z)$ is analytic for $z\neq0$, and noting that $z=\zeta$, the asymptotic solution $w^{(j,k)}(u,z)$ given by \cref{gz} is valid for $z \in \mathbf{S}_{j} \cup \mathbf{S}_{k}$, except near the boundaries and near $z=0$. More precisely, for arbitrary positive small $\delta$ the expansion for $w^{(j,k)}(u,z)$ is uniformly valid for $z$ lying in the domain $Z_{\delta}^{(j,k)}$, where
\begin{equation}
\label{eq86a}
Z_{\delta}^{(0,1)} = \left\{ 
z:-\tfrac{1}{3}\pi + \delta \leq \arg(z) \leq \tfrac{2}{3} \pi-\delta,
\, |z|\geq \delta
\right\},
\end{equation}
with $Z_{\delta}^{(-1,0)}$ being the conjugate of this, and $Z_{\delta}^{(-1,1)}$ is $Z_{\delta}^{(-1,0)}$ rotated positively by $2 \pi/3$.
We have then
\begin{theorem}
For $z \in Z_{\delta}^{(j,k)}$ asymptotic solutions of (\ref{eq70}) with $p(z)$ given by (\ref{eq75}) are furnished by (\ref{eq42}), where
\begin{equation}
\label{eq95}
\hat{G}_{0}(z)=-\frac{p(z)}{f(z)}=-\sum\limits_{r=0}^R {p_{r} z^{r-1}},
\end{equation}
\begin{equation}
\label{eq96}
\hat{G}_{s+1}(z)=z^{-1}\hat{G}''_{s}(z) \quad (s=0,1,2,\cdots),
\end{equation}
and with the error bound
\begin{multline}
\left\vert \hat{\varepsilon}_{n}^{(j,k)}(u,z)\right\vert \leq \frac{1}{
u^{2n+2}}\left\{\left\vert {\hat{G}_{n}(z)}\right\vert +\frac{1}{2|z|^{1/4}
}\int_{\hat{\mathcal{L}}^{(j,k)}(z)}\left\vert \left\{ t^{1/4}\hat{G}
_{n}(t)\right\} ^{\prime }dt\right\vert \right\}  \\
+\dfrac{5\hat{L}_{n}^{(j,k)}(u,z)}{32u^{2n+3}|z|^{1/4}}\left\{ {1-\dfrac{5
}{32u}\int_{\hat{\mathcal{L}}^{(j,k)}(z)}{\left\vert \frac{dt}{t^{5/2}}
\right\vert }}\right\} ^{-1}\int_{\hat{\mathcal{L}}^{(j,k)}(z)}{{
\left\vert \frac{dt}{t^{5/2}}\right\vert }}, \label{eq96a}
\end{multline}
where 
\begin{equation}
\hat{L}_{n}^{(j,k)}(u,z)=\sup_{t\in \hat{\mathcal{L}}^{(j,k)}(z)}\left
\vert t^{1/4}{\hat{G}}_{n}(t)\right\vert +\frac{1}{2}\int_{\hat{\mathcal{L}}
^{(j,k)}(z)}{\left\vert {\left\{ t^{1/4}{\hat{G}}_{n}(t)\right\} ^{\prime
}dt}\right\vert }. 
\label{eq96b}
\end{equation}
\end{theorem}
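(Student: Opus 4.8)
The plan is to obtain this result as a direct specialisation of \cref{gz} (equivalently \cref{thm6}) to the inhomogeneous Airy equation. First I would match the inhomogeneous Airy equation (\ref{eq69}) against the general equation (\ref{eq1}), reading off $f(z)=z$ and $g(z)=0$; as already noted in the text, the Liouville transformation is trivial here so $z=\zeta$ throughout. Before quoting \cref{gz} I would check that its hypotheses hold: $f$ has a single simple zero at $z_{0}=0$ and is otherwise nonvanishing, $g\equiv 0$ is analytic, and $p$ is entire, so all the admissibility conditions of \cref{homogeneous} are met. The only genuine verification is that for every $z\in Z_{\delta}^{(j,k)}$ a progressive path $\hat{\mathcal{L}}^{(j,k)}(z)$ connecting the two reference points at infinity through $z$ exists; this is exactly the content of the discussion preceding the statement, where $Z_{\delta}^{(j,k)}$ is identified with $\mathbf{S}_{j}\cup\mathbf{S}_{k}$ with the sector boundaries and the origin excised.

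With the hypotheses in place, the expansion (\ref{eq42}) and its coefficient recursion follow verbatim from (\ref{eq43})--(\ref{eq44}). Substituting $f(z)=z$ into (\ref{eq43}) gives $\hat{G}_{0}(z)=-p(z)/z=-\sum_{r=0}^{R}p_{r}z^{r-1}$, which is (\ref{eq95}); and setting $g(z)=0$, $f(z)=z$ in (\ref{eq44}) collapses the recursion to $\hat{G}_{s+1}(z)=z^{-1}\hat{G}_{s}''(z)$, which is (\ref{eq96}). For the error bound I would first compute $\Phi(z)$ from (\ref{eq5}): with $f'=1$, $f''=0$, $g=0$ the expression reduces to $-5f'^{2}/(16f^{3})=-\tfrac{5}{16}z^{-3}$, so that $f^{1/2}(t)\Phi(t)=-\tfrac{5}{16}t^{-5/2}$ and hence $|f^{1/2}(t)\Phi(t)\,dt|=\tfrac{5}{16}|t^{-5/2}\,dt|$.

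It then remains to insert these data into the general bound (\ref{eq45}) and the auxiliary quantity (\ref{eq46}). Writing $|f(z)|^{1/4}=|z|^{1/4}$ and $f^{1/4}(t)=t^{1/4}$, the first brace of (\ref{eq45}) becomes the first brace of (\ref{eq96a}), while the $\Phi$-integral factor produces the prefactor $\tfrac{1}{2}\cdot\tfrac{5}{16}=\tfrac{5}{32}$ in front of $\hat{L}_{n}^{(j,k)}$ together with the term $\tfrac{5}{32u}\int|t^{-5/2}\,dt|$ inside the resolvent bracket; using $u>0$ so that $|u|=u$ yields (\ref{eq96a}) exactly, and the same substitution in (\ref{eq46}) yields (\ref{eq96b}).

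The computation is entirely mechanical, so I do not expect a substantive obstacle; the one point meriting care is convergence of the integrals at the reference points at infinity. Since $\hat{G}_{n}(z)$ is a Laurent polynomial of top degree $R-1-3n$, the integrand $\{t^{1/4}\hat{G}_{n}(t)\}'$ behaves like $t^{R-7/4-3n}$ and is integrable at infinity only once $n$ is large enough (roughly $3n>R$); for smaller $n$ with high-degree forcing the bound (\ref{eq96a}) remains formally valid but vacuous, and one must instead invoke the generalised bound (\ref{LG87a}) with an appropriate $\sigma$. This is why the proviso of \cref{thm4} that ``the integrals and supremum are assumed to exist'' is inherited here, and I would state the specialised bound with that understanding.
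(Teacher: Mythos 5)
Your proposal is correct and follows essentially the same route as the paper: the theorem is obtained there exactly as you describe, by specialising \cref{gz} with $f(z)=z$, $g(z)=0$, computing $\Phi(z)=-\tfrac{5}{16}z^{-3}$ so that $f^{1/2}(t)\Phi(t)=-\tfrac{5}{16}t^{-5/2}$, and substituting into (\ref{eq45})--(\ref{eq46}). Your closing caveat about convergence of the integrals at infinity (needing roughly $n\geq\tfrac{1}{3}R$, with (\ref{LG87a}) as the fallback) is precisely the content of the remark the paper places immediately after the theorem.
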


\begin{remark}
For the integrals involving $\{ t^{1/4}{\hat{G}}_{n}(t)\}'$ to converge at infinity we require that $\hat{G}_{n}(z)=\mathcal{O}(z^{-1})$ as $z\rightarrow \infty $, so that from (\ref{eq95})
and (\ref{eq96}) $n\geq \frac{1}{3}R$. If fewer terms are taken the bound can be modified as in (\ref{LG87a}). In either case $\hat{\varepsilon}_{n}^{(j,k)}(u,z) =\mathcal{O}(u^{-2n-2})$ as $u \rightarrow \infty$.
\end{remark}

\begin{figure}[htbp]
  \centering
  \includegraphics[width=\textwidth,keepaspectratio]{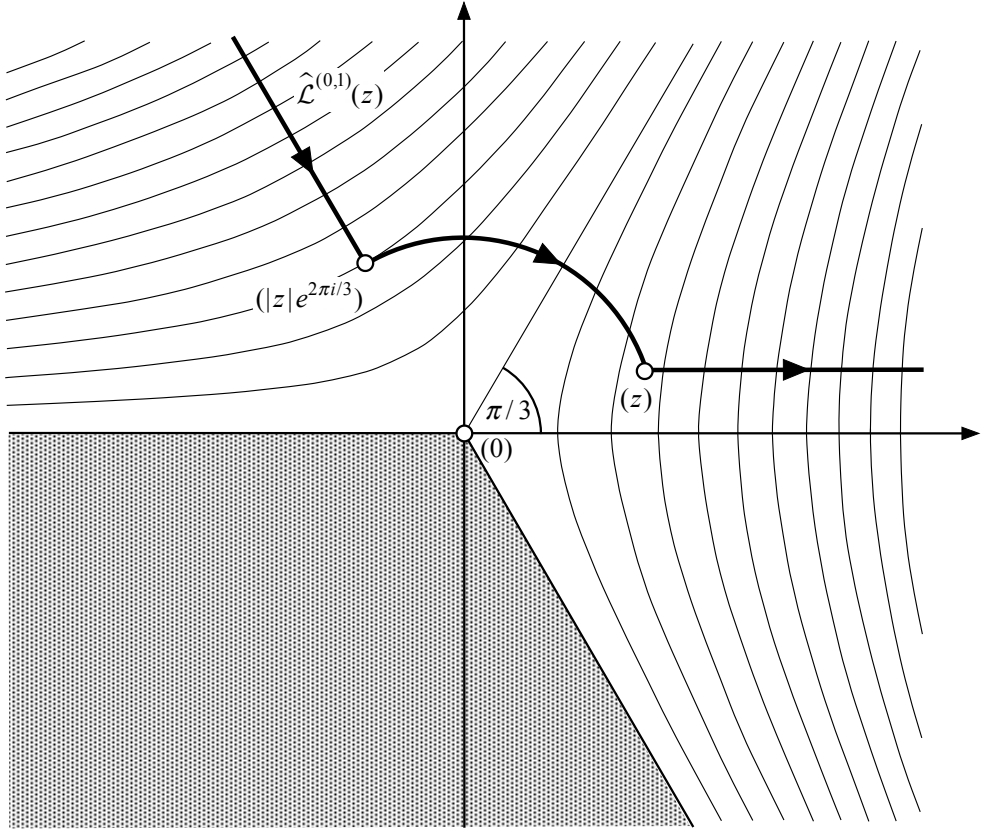}
  \caption{Path $\hat{\mathcal{L}}^{(0,1)}(z)$ in $t$ plane for $u >0$ and $0\leq\arg(z)\leq 2\pi/3$}
  \label{fig:fig1}
\end{figure}

\begin{figure}[htbp]
  \centering
  \includegraphics[width=\textwidth,keepaspectratio]{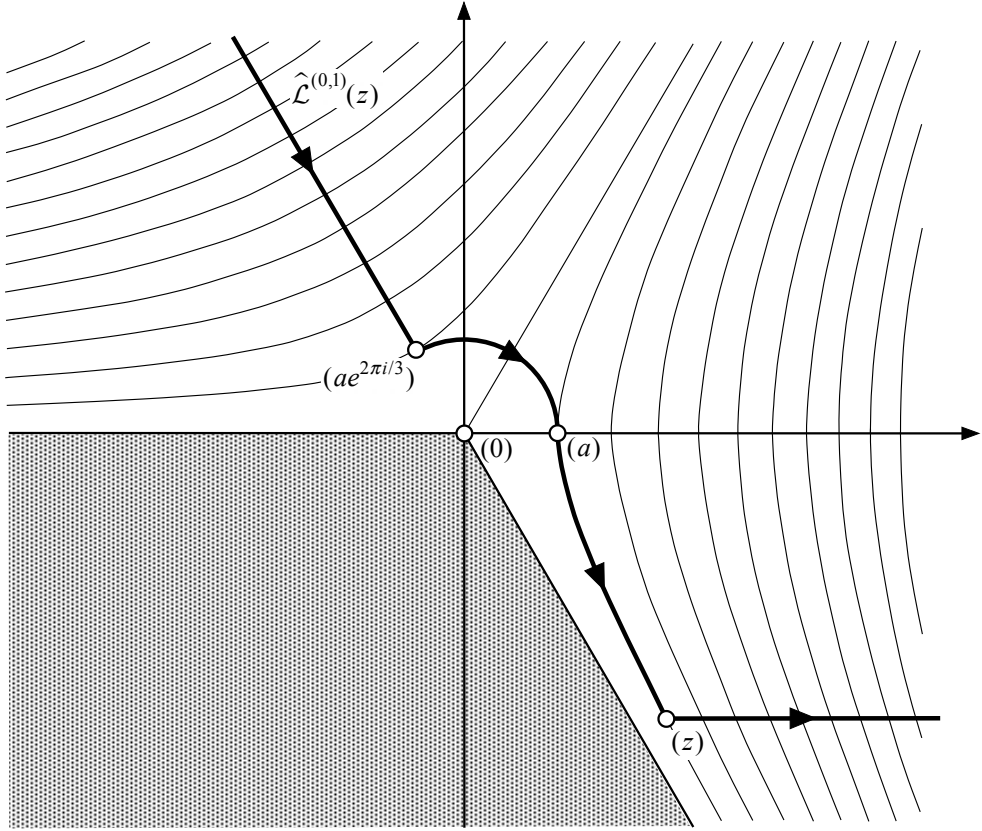}
  \caption{Path $\hat{\mathcal{L}}^{(0,1)}(z)$ in $t$ plane for $u >0$, $-\pi/3 < \arg(z) < 0$ and $a=\{\Re(z^{3/2})\}^{2/3}$}
  \label{fig:fig2}
\end{figure}

Recalling $u >0$, in \cref{fig:fig1} we illustrate  several curves in $t$ plane given by
\begin{equation}
\Re(t^{3/2})=\text{constant},
\end{equation}
and on these we observe $\Re(t^{3/2}) \rightarrow -\infty$ as $t \rightarrow \infty e^{2 \pi i/3}$ and $\Re(t^{3/2}) \rightarrow +\infty$ as $\Re(t) \rightarrow +\infty$.

Also included is a typical path $\hat{\mathcal{L}}^{(0,1)}(z)$ for $0\leq\arg(z)\leq 2\pi/3$. It consists of the union of the ray from $t=\infty  e^{2 \pi i/3}$ to $t = |z|e^{2 \pi i/3}$, the negatively orientated arc $|t| = |z|$ from $t = |z| e^{2 \pi i/3} $ to $t=z$, and the horizontal line from $t=z$ to $t=i\Im(z)+\infty$.

This path connects $z^{(1)}$ to $z^{(0)}$, passes through $z$, and meets the monotonicity requirement that $\Re(z^{3/2})$ be nondecreasing as $t$ passes along it from $z^{(1)}$ to $z^{(0)}$. Moreover, the inclusion of the arc segment ensures that $|t|\geq |z|$ for all $t$ on the path, and hence the bounds (\ref{eq96a}) and (\ref{eq96b}) are of the appropriate order of magnitude as $z\rightarrow \infty$. We also note that the path keeps the maximum possible distance away from the singularity at $t=0$.

In \cref{fig:fig2} a typical path $\hat{\mathcal{L}}^{(0,1)}(z)$ in $t$ plane is shown for $-\pi/3 < \arg(z) < 0$. In this $a=\{\Re(z^{3/2})\}^{2/3}>0$. The path consists of the union of the ray from $t=\infty  e^{2 \pi i/3}$ to $t = a e^{2 \pi i/3}$, the negatively orientated arc $|t| = a$ from $t = a e^{2 \pi i/3} $ to $t=a$, the part of the level curve $\Re(t^{3/2})=\Re(z^{3/2})$ from $t=a$ to $t=z$, and the horizontal line from $t=z$ to $t=i\Im(z)+\infty$.

This path meets all the requirements for the error bounds to be valid, and in particular in order not to violate the monotonicity condition we are required to include a segment of the level curve passing through $t=z$. Consequently the path must pass within a distance $a$ of the pole at $t=0$. We note that $a\rightarrow 0$ as $z$ approaches the ray $\arg(z)=-\pi /3$, which indicates that the approximation breaks down at this boundary, as expected. Furthermore, $a$ remains bounded as $z \rightarrow \infty$ along a fixed level curve, and so the bounds lose sharpness in this case too.

All of the above problems, along with similar ones when $2\pi/3<\arg(z)\leq \pi$, can be avoided by using the connection formulas of \cref{thmconnection}. For example, if $-\pi/3 \leq \arg(z) < 0$ we can use (\ref{eq61a}). To do so we observe, from (\ref{eq47}) with $z=\zeta$, $f(z)=z$ and $g(z)=0$, that $\mathcal{A}_{2m+2}(u,z)=1$ and $\mathcal{B}_{2m+2}(u,z)=0$. Thus we have from (\ref{eq61})
\begin{equation}
\label{eq61a}
w^{(0,1)}(u,z)=
w^{(-1,0)}(u,z)+2\pi i\gamma(u)
\mathrm{Ai} \left(u^{2/3}z\right),
\end{equation}
where $\gamma(u)$ is independent of $m$ (and is given explicitly by (\ref{eq101m}) below). In this the asymptotic expansion for $w^{(-1,0)}(u,z)$ is again given by (\ref{eq42}), but with the new path $\hat{\mathcal{L}}^{(-1,0)}(z)$ in the error bound (\ref{eq96a}) extending from $z^{(-1)}=\infty e^{-2 \pi i/3} $ to $z^{(0)} =\Im(z) +\infty$. This path can be similarly constructed to the conjugate of the one shown in \cref{fig:fig1}, with all its advantages.

Apart from the sharper error bound this alternative expansion coming from (\ref{eq61a}) only differs by the presence of an exponentially small term, namely the Airy function $\mathrm{Ai} \left(u^{2/3}z\right)$. Although negligible in the Poincar\'{e} sense, its inclusion significantly increases accuracy. This is a common situation in asymptotics; see \cite[Chap. 3, Sect. 6.2]{Olver:1997:ASF}.

In summary, each expansion for $w^{(j,k)}(u,z)$ should only be used for $0\leq \arg(z) \leq 2\pi/3$  ($(j,k)=(0,1)$), $-2\pi/3\leq \arg(z) \leq 0$  ($(j,k)=(-1,0)$) and $|\arg(-z)| \leq \pi/3$  ($(j,k)=(-1,1)$), with appropriate connection formulas being employed outside these sectors. A similar argument can be made in the more general case of \cref{gz}.

Before giving an asymptotic expansion near the turning point, we first give an exact expression for the solutions, including the connection coefficient we used above. This reads as follows.
\begin{proposition}
\label{thmpoly}
Solutions of (\ref{eq70}) with $p(z)$ given by (\ref{eq75}) are explicitly given by
\begin{equation}
\label{eq98m}
w^{(j,k)}(u,z)=\gamma(u) \mathrm{Wi}^{(j,k)}
\left(u^{2/3}z \right)
+\mathcal{G}(u,z),
\end{equation}
where
\begin{equation}
\label{eq101m}
\gamma(u)=
\frac{1}{u^{4/3}}\sum\limits_{s=0}^{\left\lfloor {R/3} \right\rfloor } 
\frac{(3s)!p_{3s} }{3^{s}s!u^{2s}},
\end{equation}
and
\begin{equation}
\label{eq102m}
\mathcal{G}(u,z)=\sum\limits_{r=0}^R {u^{-2(r+2)/3}p_{r} Q_{r-1} \left(u^{2/3}z \right)}.
\end{equation}
\end{proposition}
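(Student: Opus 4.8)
The plan is to identify the right-hand side of (\ref{eq98m}) with the fundamental solution $w^{(j,k)}(u,z)$ by first reducing the general representation of \cref{ThmG} to the present setting, and then computing the two ingredients $\gamma(u)$ and $\mathcal{G}(u,z)$ explicitly. Since $f(z)=z$ and $g(z)=0$, the coefficient functions of (\ref{eq47}) reduce exactly to $\mathcal{A}_{2m+2}(u,z)=1$ and $\mathcal{B}_{2m+2}(u,z)=0$, so $w_{m,j}(u,z)=\mathrm{Ai}_{j}(u^{2/3}z)$ and (\ref{eq55}) becomes $w^{(j,k)}(u,z)=\gamma_{m}(u)\mathrm{Wi}^{(j,k)}(u^{2/3}z)+\mathcal{G}_{m}(u,z)$. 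Because $w_{m,1}(u,z)=\mathrm{Ai}_{1}(u^{2/3}z)$ is independent of $m$, the connection relation (\ref{eq54}) forces $\gamma_{m}(u)$, and hence $\mathcal{G}_{m}(u,z)$, to be independent of $m$; I drop the subscript. This establishes the shape (\ref{eq98m}), and what remains is to pin down the two functions.

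To do so I would apply the operator $L=d^{2}/dz^{2}-u^{2}z$ to (\ref{eq98m}). Since $L[w^{(j,k)}]=p(z)$ and, by (\ref{eq27}), $L[\mathrm{Wi}^{(j,k)}(u^{2/3}z)]=u^{4/3}$, the unknown $\mathcal{G}$ must solve the inhomogeneous Airy equation $L[\mathcal{G}]=p(z)-\gamma(u)u^{4/3}$. The natural ansatz is that $\mathcal{G}$ is a polynomial built from the $Q_{r-1}$ of the Lemma, as in (\ref{eq102m}). Passing to the variable $\zeta=u^{2/3}z$, in which $L=u^{4/3}(d^{2}/d\zeta^{2}-\zeta)$, the key tool is the differential identity (\ref{84b}) in the form $(d^{2}/d\zeta^{2}-\zeta)Q_{r-1}(\zeta)=c_{r}-\zeta^{r}$. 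Substituting the ansatz, tracking the powers produced by the scaling $z^{r}=u^{-2r/3}\zeta^{r}$, and applying this identity term by term reduces $L[\mathcal{G}]$ to the original monomials $p_{r}z^{r}$ together with a residual $z$-independent contribution coming from the constants $c_{r}$.

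The crux of the argument is the treatment of this residual constant. By (\ref{eqcr}) the $c_{r}$ are nonzero precisely when $r$ is a multiple of $3$, so the residual constant is $\sum_{s}c_{3s}p_{3s}u^{-2s}$, which is exactly $\gamma(u)u^{4/3}$ with $\gamma(u)$ given by (\ref{eq101m}). Requiring the polynomial equation $L[\mathcal{G}]=p(z)-\gamma(u)u^{4/3}$ to be consistent then forces this residual to be supplied by the Scorer term, which produces (\ref{eq101m}); equivalently, $\gamma(u)u^{4/3}$ is the single overdetermined degree-zero compatibility condition of the coefficient recursion for a degree-$(R-1)$ polynomial $\mathcal{G}$. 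I expect the careful bookkeeping of the $u$-powers under the $\zeta=u^{2/3}z$ scaling, and the correct isolation and sign of this constant term, to be the main obstacle; the rest is the routine descending recursion of the Lemma. I would then confirm that the $\mathcal{G}$ so constructed is entire with only algebraic growth, so it carries none of the recessive/dominant Airy behaviour, which is what identifies it as the $\mathcal{G}$ of \cref{ThmG}: any two such entire solutions differ by a solution of the homogeneous scaled Airy equation, and every nonzero such solution is exponentially large in at least one of $\mathbf{S}_{0},\mathbf{S}_{\pm1}$, so the slowly varying polynomial solution is unique.

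As an alternative route that bypasses \cref{ThmG}, one could insert $p(z)=\sum_{r}p_{r}z^{r}$ directly into the exact representations (\ref{eq72})--(\ref{eq74}), evaluate each $\int t^{r}\mathrm{Ai}_{\bullet}(u^{2/3}t)\,dt$ in closed form by (\ref{eq81}), and collapse the resulting Airy combinations using the Wronskians (\ref{eq70}), (\ref{eq71}) and the representation (\ref{eq4}) of $\mathrm{Hi}$. Here the $c_{r}\int\mathrm{Ai}_{\bullet}$ pieces reassemble into $\gamma(u)\mathrm{Wi}^{(j,k)}(u^{2/3}z)$, while the $P_{r-2}\mathrm{Ai}_{\bullet}+Q_{r-1}\mathrm{Ai}_{\bullet}'$ pieces reassemble, via the Wronskians, into the polynomial $\mathcal{G}(u,z)$, recovering the same formulas.
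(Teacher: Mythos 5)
Your proposal is correct in substance, but your primary route is genuinely different from the paper's. The paper proceeds constructively: after reducing \cref{ThmG} to the form (\ref{eq98m}) exactly as you do, it superposes over the monomials $z^{r}$, rescales to $u=1$, and evaluates the exact variation-of-parameters integrals $\int t^{r}\mathrm{Ai}_{\pm 1}(t)\,dt$ in closed form via the antiderivative identity (\ref{eq81}); the $P_{r-2}\,\mathrm{Ai}+Q_{r-1}\,\mathrm{Ai}'$ pieces collapse through the Wronskian (\ref{eq70}) and the representation (\ref{eq4}) into a $Q_{r-1}$ plus $c_{r}\mathrm{Wi}^{(j,k)}$ term, and $\gamma(u)$ is then read off from the connection formula (\ref{eq26}). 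This is precisely your ``alternative route that bypasses \cref{ThmG}.'' Your primary route instead verifies the ansatz by applying $L=d^{2}/dz^{2}-u^{2}z$ and using the differentiated identity (\ref{84b}), then appeals to uniqueness; this is a legitimate and arguably cleaner argument, but two points need care. First, the uniqueness step should be localized to $\mathbf{T}_{j}\cup\mathbf{T}_{k}$: what forces the homogeneous part $c_{j}\mathrm{Ai}_{j}+c_{k}\mathrm{Ai}_{k}$ to vanish is that $\mathrm{Ai}_{k}$ is dominant at $z^{(j)}$ and $\mathrm{Ai}_{j}$ at $z^{(k)}$ while both sides of (\ref{eq98m}) grow only algebraically there; exponential largeness ``in at least one of $\mathbf{S}_{0},\mathbf{S}_{\pm1}$'' is not by itself a contradiction, since $w^{(j,k)}$ is itself exponentially large in the third sector. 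Second, your worry about ``the correct isolation and sign'' is well placed: carried out literally, the computation gives $L\bigl[u^{-2(r+2)/3}Q_{r-1}(u^{2/3}z)\bigr]=c_{r}u^{-2r/3}-z^{r}$, so the polynomial solving $L[\mathcal{G}]=p(z)-u^{4/3}\gamma(u)$ is the \emph{negative} of (\ref{eq102m}); for $r=1$ the slowly varying solution of $y''-zy=z$ is $-Q_{0}=-1$, not $+1$, and the Wronskian combination $\mathrm{Ai}_{-1}\mathrm{Ai}_{1}'-\mathrm{Ai}_{1}\mathrm{Ai}_{-1}'=-1/(2\pi i)$ likewise produces $-Q_{r-1}$ in (\ref{eq89}). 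This is consistent with $\hat{G}_{0}^{\ast}(z)=-\sum_{r\geq1}p_{r}z^{r-1}$ in (\ref{eq116p}) and indicates a sign slip in the paper's (\ref{eq89}) and (\ref{eq102m}) that your verification route would actually surface.
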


\begin{proof}
Firstly, as noted above $z=\zeta$, $\mathcal{A}_{2m+2}(u,z)=1$ and $\mathcal{B}_{2m+2}(u,z)=0$. Thus the form (\ref{eq98m}) follows from \cref{ThmG}. To establish (\ref{eq101m}) and (\ref{eq102m}) we use the superposition principle
\begin{equation}
\label{eq97}
w^{(j,k)}(u,z)=\sum\limits_{r=0}^R {p_{r} w_{r}^{(j,k)}(u,z)},
\end{equation}
where $w_{r}^{(j,k)}(u,z)$ are solutions of
\begin{equation}
\label{eq76}
\frac{d^{2}w_{r}}{dz^{2}}-u^{2}zw_{r}=z^{r}.
\end{equation}
If we replace $z$ by $u^{-2/3}z$ we find that $w_{r}(u,z)=u^{-2(r+2)/3}y_{r}\left(u^{2/3}z \right)$, where $y_{r}(z)$ satisfies the differential equation
\begin{equation}
\label{eq77}
\frac{d^{2}y_{r}}{dz^{2}}-zy_{r}=z^{r}.
\end{equation}
Thus on using (\ref{eq72}) with $u=1$ and $p(t)=t^r$ one particular solution of (\ref{eq76}) is given by
\begin{equation}
\label{eq87}
w_{r}^{(-1,1)}(u,z)=u^{-2(r+2)/3}y_{r}^{(-1,1)} \left(u^{2/3}z \right),
\end{equation}
where
\begin{multline}
\label{eq88}
y_{r}^{(-1,1)}(z)=2\pi i
\left[ 
\mathrm{Ai}_{-1}(z)\int_{\infty \exp (2\pi 
i/3)}^{z} {t^{r}\mathrm{Ai}_{1}(t)dt} 
\right.
\\ \left.
-\mathrm{Ai}_{1}(z)\int_{\infty \exp (-2\pi i/3)}^{z} {t^{r}\mathrm{Ai}_{-1}(t)dt} 
\right],
\end{multline}
with $y_{r}^{(0,1)}(z)$ and $y_{r}^{(-1,0)}(z)$ being similarly defined.

Next, on referring to (\ref{eq4}), (\ref{eq22a}) and (\ref{eq81}), we obtain
\begin{equation}
\label{eq89}
y_{r}^{(-1,1)}(z)=Q_{r-1}(z)+c_{r} \mathrm{Wi}^{(-1,1)}(z).
\end{equation}

Likewise one finds similar expressions for $y_{r}^{(0,1)}(z)$ and $y_{r}^{(-1,0)}(z)$
and hence solutions of (\ref{eq76}) are given by
\begin{equation}
\label{eq93}
w_{r}^{(j,k)}(u,z)=u^{-2(r+2)/3}\left[ {Q_{r-1} \left(u^{2/3}z
\right)+c_{r} \mathrm{Wi}^{(j,k)}\left(u^{2/3}z \right)} \right].
\end{equation}
Now this and (\ref{eq26}) implies
\begin{equation}
\label{eq99}
w_{r}^{(0,1)}(u,z)=w_{r}^{(-1,0)}(u,z)+2\pi i c_{r} 
u^{-2(r+2)/3}\mathrm{Ai}_{1} \left(u^{2/3}z \right),
\end{equation}
and hence from (\ref{eq97}) we get (\ref{eq61a}) where
\begin{equation}
\label{eq101}
\gamma(u)=\sum\limits_{r=0}^R \frac{c_{r} p_{r} }{u^{2(r+2)/3}}.
\end{equation}
Then from (\ref{eqcr}) we arrive at (\ref{eq101m}), as asserted.

Finally from (\ref{eq97}), (\ref{eq93}) and (\ref{eq101}) we have
\begin{equation}
\label{eq98}
w^{(j,k)}(u,z)=\gamma(u) \mathrm{Wi}^{(j,k)}\left( 
u^{2/3}z \right)
+\sum\limits_{r=0}^R u^{-2(r+2)/3}p_{r} Q_{r-1} \left(u^{2/3}z \right),
\end{equation}
and on comparing this with (\ref{eq98m}) we obtain (\ref{eq102m}).
\end{proof}

Asymptotic approximations useful for $|z|\leq r_{0}$ for fixed $r_{0}>0$ are furnished by the following.

\begin{theorem}
Solutions of (\ref{eq70}) with $p(z)$ given by (\ref{eq75}) are given by (\ref{eq98m}) and (\ref{eq101m}) where
\begin{equation}
\label{eq115p}
\mathcal{G}(u,z)=
\frac{1}{u^{2}}\sum\limits_{s=0}^{m-1} \frac{\hat{G}_{s}^{\ast}(z)}{u^{2s}} +\mathcal{O}\left( \frac{1}{u^{2m+2}} \right),
\end{equation}
in which
\begin{equation}
\label{eq116p}
\hat{G}_{0}^{\ast}(z)=-\sum\limits_{r=1}^R p_{r} 
z^{r-1},
\end{equation}
\begin{equation}
\label{eq117p}
\hat{G}_{1}^{\ast}(z)
=-\sum\limits_{r=4}^R (r-1)(r-2)p_{r} z^{r-4} \quad (R\geq4),
\end{equation}
with subsequent terms being the analytic parts of the coefficients given by (\ref{eq96}).
\end{theorem}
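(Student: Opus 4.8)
The plan is to exploit the exact representation from \cref{thmpoly} and reduce the theorem to matching two recursions. By \cref{thmpoly} the solution is (\ref{eq98m}) with $\gamma(u)$ given exactly by (\ref{eq101m}) and $\mathcal{G}(u,z)$ given exactly by the finite sum (\ref{eq102m}); moreover $\mathcal{G}(u,z)$ is analytic at $z=0$ by \cref{ThmG}. Since in (\ref{eq102m}) the fractional powers of $u$ in each term $u^{-2(r+2)/3}Q_{r-1}(u^{2/3}z)$ recombine into integer powers of $u^{-2}$, we may write $\mathcal{G}(u,z)=u^{-2}\sum_{s\ge 0}C_{s}(z)u^{-2s}$ with polynomial coefficients $C_{s}$, only finitely many nonzero. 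The whole task is then to identify $C_{s}(z)$ with the analytic part $\hat{G}_{s}^{\ast}(z)$ of the coefficient defined by (\ref{eq96}), and to read off $C_{0}$ and $C_{1}$.

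First I would obtain a differential recursion for the $C_{s}$. Because $w^{(j,k)}(u,z)=\gamma(u)\mathrm{Wi}^{(j,k)}(u^{2/3}z)+\mathcal{G}(u,z)$ solves the inhomogeneous Airy equation (\ref{eq69}) while $\mathrm{Wi}^{(j,k)}(u^{2/3}z)$ solves (\ref{eq27}), subtraction gives
\[
\partial^{2}\mathcal{G}/\partial z^{2}-u^{2}z\,\mathcal{G}=p(z)-\gamma(u)u^{4/3},
\]
whose right-hand side is $p(z)$ minus a quantity independent of $z$; by (\ref{eq101m}) that quantity is $\sum_{s\ge 0}\beta_{s}u^{-2s}$ with $\beta_{s}=(3s)!\,p_{3s}/(3^{s}s!)$, so $\beta_{0}=p_{0}$. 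Substituting the expansion and equating coefficients of $u^{-2s}$ gives $-zC_{0}=p(z)-\beta_{0}$ together with $C_{s-1}''-zC_{s}=-\beta_{s}$ for $s\ge1$; equivalently $C_{0}=-\{p(z)-p_{0}\}/z=-\sum_{r=1}^{R}p_{r}z^{r-1}$ and $C_{s}=z^{-1}\{C_{s-1}''+\beta_{s}\}$.

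The crux is the separation of analytic and principal parts. Each $C_{s}$ is entire and $\beta_{s}$ is constant, so the apparent simple pole on the right of $C_{s}=z^{-1}\{C_{s-1}''+\beta_{s}\}$ must cancel; hence $C_{s}$ equals the analytic part of $z^{-1}C_{s-1}''$, that is $C_{s}=\{z^{-1}C_{s-1}''\}^{\ast}$. On the other hand, writing $\hat{G}_{s-1}=\hat{G}_{s-1}^{\ast}+\pi_{s-1}$ with $\pi_{s-1}$ the principal part at $z=0$, the recursion (\ref{eq96}) gives $\hat{G}_{s}=z^{-1}(\hat{G}_{s-1}^{\ast})''+z^{-1}\pi_{s-1}''$, and since $z^{-1}\pi_{s-1}''$ is built only from powers $z^{-4},z^{-5},\dots$ it contributes nothing to the analytic part, whence $\hat{G}_{s}^{\ast}=\{z^{-1}(\hat{G}_{s-1}^{\ast})''\}^{\ast}$. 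As $C_{0}=\hat{G}_{0}^{\ast}$, induction forces $C_{s}=\hat{G}_{s}^{\ast}$ for every $s$, which is precisely the asserted description of the coefficients.

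Finally, truncating the finite series at $s=m-1$ produces (\ref{eq115p}), the remainder being the tail $u^{-2}\sum_{s\ge m}\hat{G}_{s}^{\ast}(z)u^{-2s}$, which is $\mathcal{O}(u^{-2m-2})$ uniformly for $|z|\le r_{0}$ since the $\hat{G}_{s}^{\ast}$ are polynomials. The explicit forms (\ref{eq116p}) and (\ref{eq117p}) then follow by reading off $\hat{G}_{0}^{\ast}$ above and computing $\hat{G}_{1}=z^{-1}\hat{G}_{0}''=-\sum_{r=0}^{R}(r-1)(r-2)p_{r}z^{r-4}$ from (\ref{eq95})--(\ref{eq96}) and retaining its nonnegative powers. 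I expect the main obstacle to be the bookkeeping in the third paragraph: verifying that the constant $\beta_{s}$ and the pole part of $\hat{G}_{s-1}$ affect only principal parts, so that both recursions collapse to the single operation $\{z^{-1}(\cdot)''\}^{\ast}$ acting on the analytic parts.
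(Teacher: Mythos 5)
Your proof is correct, but it takes a genuinely different route from the paper's. The paper obtains (\ref{eq115p}) by specializing the general machinery of \cref{close}: it sets the $\mathcal{E}_{s}$ and $\tilde{\mathcal{E}}_{s}$ to zero in (\ref{eq65}) and (\ref{eq66}) to get the intermediate expansion (\ref{eq114p}), then applies the Cauchy integral formula (\ref{eq67}) together with the generalized formula (\ref{eq68a}) from \cref{thm:cauchy} to project onto analytic parts, observing that the $\gamma(u)$ term consists solely of negative powers of $z$ and so is annihilated. You instead exploit the exact polynomial representation (\ref{eq102m}) from \cref{thmpoly}: subtracting (\ref{eq27}) from (\ref{eq69}) gives the exact ODE $\mathcal{G}_{zz}-u^{2}z\mathcal{G}=p(z)-\gamma(u)u^{4/3}$, and matching powers of $u^{-2}$ yields a recursion for the (polynomial) coefficients $C_{s}$ that you then identify with the analytic parts $\hat{G}_{s}^{\ast}$ by showing both recursions collapse to the single operation $\{z^{-1}(\cdot)''\}^{\ast}$ — the key observations being that entirety of $C_{s}$ forces the simple pole to cancel against $\beta_{s}$, and that the principal part of $\hat{G}_{s-1}$ feeds only powers $z^{-4},z^{-5},\dots$ into $\hat{G}_{s}$. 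What your approach buys is self-containedness and exactness: the asymptotic statement becomes an algebraic identity between finite polynomial expansions, with the $\mathcal{O}(u^{-2m-2})$ term being a genuine finite tail rather than resting on the error analysis behind (\ref{eq66}). What the paper's approach buys is uniformity of method: it is the same Cauchy-integral recipe that works for general $f$, $g$, $p$ (where no exact closed form like (\ref{eq102m}) exists), so the Airy case serves as a worked illustration of the general algorithm. Both arguments are sound, and your explicit computations of $\hat{G}_{0}^{\ast}$ and $\hat{G}_{1}^{\ast}$ agree with (\ref{eq116p}) and (\ref{eq117p}).
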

\begin{remark}
$\hat{G}_{s}^{\ast}(z)=0$ for $3s\geq R$. For example, $\hat{G}_{2}^{\ast}(z)=\hat{G}_{3}^{\ast}(z)=\cdots=0$ for $R\leq 6$.
\end{remark}

\begin{proof}
From (\ref{eq65}) (with each $\mathcal{E}_{s}(z)$ and $\tilde{\mathcal{E}}_{s}(z)$ identically zero) and (\ref{eq66}) we have
\begin{equation}
\label{eq114p}
\mathcal{G}(u,z)=\frac{1}{u^{2}}\sum\limits_{s=0}^{m-1} 
\frac{\hat{G}_{s}(z)}{u^{2s}} +\frac{\gamma 
(u)}{u^{2/3}z}\sum\limits_{s=0}^{m-1} \frac{(3s)!}{s!\left( {3u^{2}z^{3}} 
\right)^{s}} +\mathcal{O}\left(\frac{1}{u^{2m+2}} \right).
\end{equation}
So near turning point from (\ref{eq67}) and (\ref{eq68a}) we get (\ref{eq115p}), noting that the analytic part of the second term on the RHS of (\ref{eq114p}) is identically zero. The analytic part of (\ref{eq95}) gives (\ref{eq116p}), and from (\ref{eq96}) with $s=0$
\begin{equation}
\hat{G}_{1}(z)
=-\sum\limits_{r=0}^R (r-1)(r-2)p_{r} z^{r-4},
\end{equation}
whose analytic part is given by (\ref{eq117p}).
\end{proof}

\subsection{Exponential forcing term}
\label{secexp}

Now consider $p(z)$ an exponential function, so that
\begin{equation}
\label{eq103}
\frac{d^{2}w}{dz^{2}}-u^{2}zw=e^{\alpha z},
\end{equation}
for some fixed $\alpha \in \mathbb{C}$. Recalling $z=\zeta$ we have $\xi=(2z/3)^{3/2}$. For points bounded away from the turning point $z=0$ our asymptotic approximation reads as follows.
\begin{theorem}
\label{thmexp}
Let $z \neq 0$ be a point which lies on an $R_{2}$ path $\hat{\mathcal{L}}_{\alpha}^{(j,k)}(z)$ connecting $z^{(j)}$ with $z^{(k)}$ having the property that the real part of $u(2t/3)^{3/2}+\alpha t$ is monotonic as $t$ passes along this path from $z^{(j)}$ with $z^{(k)}$. Then, for the set of all such points $z$, solutions of (\ref{eq103}) are given by (\ref{eq42}) where
\begin{equation}
\label{eq104}
\hat{G}_{0}(z)=-z^{-1}e^{\alpha z},
\end{equation}
and subsequent terms given by (\ref{eq96}). The associated error terms are $\mathcal{O}(u^{-2n-2})$ as $u \rightarrow \infty$, and are bounded by
\begin{multline}
\left\vert \hat{\varepsilon}_{n}^{(j,k)}(u,z)\right\vert \\ \leq \frac{1}{u^{2n+2}}\left\{\left\vert h(u,z)\hat{G}_{n}(z)\right\vert +
\frac{\left\vert e^{\alpha z}\right\vert }{2|z|^{1/4}}\int_{\hat{\mathcal{L}}
^{(j,k)}(z)}{\left\vert {\left\{ t^{1/4}e^{-\alpha t}h(u,t){\hat{G}}_{n}(t)\right\} ^{\prime }dt}\right\vert }\right\} \\
+\dfrac{5\left\vert e^{\alpha z}\right\vert \hat{L}_{\alpha,n}^{(j,k)}(u,z)}{32u^{2n+3}|z|^{1/4}}\left\{ 1-\dfrac{5}{32u}\int_{\hat{\mathcal{L}}_{\alpha}
^{(j,k)}(z)}{\left\vert
\frac{h(u,t)}{t^{5/2}} dt\right\vert }\right\}
^{-1}
\int_{\hat{\mathcal{L}}_{\alpha}^{(j,k)}(z)}\left\vert
\frac{h(u,t)}{t^{5/2}} dt
\right\vert,
\label{eq104p}
\end{multline}
where 
\begin{equation}
h(u,z)=\left( 1+\frac{\alpha }{2uz^{1/2}}\right)^{-1}, \label{eqh}
\end{equation}
and 
\begin{multline}
\hat{L}_{\alpha,n}^{(j,k)}(u,z)=\sup_{t\in \hat{\mathcal{L}}^{(j,k)}(z)}\left
\vert t^{1/4}e^{-\alpha t}h(u,t){\hat{G}}_{n}(t)\right\vert \\
+\frac{1}{2}\int_{\hat{\mathcal{L}}_{\alpha}^{(j,k)}(z)}{\left\vert {\left\{
t^{1/4}e^{-\alpha t}h(u,t){\hat{G}}_{n}(t)\right\} ^{\prime }dt}\right\vert }.
\label{eq105p}
\end{multline}
In this, $u$ must be sufficiently large so that for $t \in \hat{\mathcal{L}}^{(j,k)}(z)$ we have $\left\vert  \alpha  u^{-1} t^{-1/2} \right\vert <2$ (hence $h(u,t)$ is finite), and also
\begin{equation}
\label{eq106p}
\dfrac{5}{32u}\int_{\hat{\mathcal{L}}_{\alpha}
^{(j,k)}(z)}{\left\vert
\frac{h(u,t)}{t^{5/2}} dt\right\vert }<1.
\end{equation}

Asymptotic expansions valid for $|z|\leq r_{0}$ ($r_{0}>0$) are given by (\ref{eq98m}) where
\begin{equation}
\label{eq113}
\gamma(u)=u^{-4/3}\exp \left( \tfrac{1}{3}u^{-2}\alpha^{3}\right),
\end{equation}
and $\mathcal{G}(u,z)$ again has the expansion (\ref{eq115p}), where this time the analytic parts of the first two coefficients are
\begin{equation}
\label{eq116}
\hat{G}_{0}^{\ast}(z)=\frac{1}{z}-\frac{e^{\alpha z}}{z}=-\alpha 
-\frac{\alpha^{2}z}{2}-\frac{\alpha^{3}z^{2}}{6}-\cdots,
\end{equation}
\begin{equation}
\label{eq117}
\hat{G}_{1}^{\ast}(z)=\frac{2}{z^{4}}+\frac{\alpha 
^{3}}{3z}-\frac{e^{\alpha z}}{z^{4}}\left( {\alpha^{2}z^{2}-2\alpha z+2} 
\right)=-\frac{\alpha^{4}}{4}-\frac{\alpha^{5}z}{10}-\frac{\alpha 
^{6}z^{2}}{36}-\cdots.
\end{equation}
\end{theorem}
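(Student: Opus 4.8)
The plan is to prove the two assertions of the statement in the two regimes in which they are phrased: first the region bounded away from the turning point $z=0$, where I would adapt the exponential-type error analysis of \cref{thmLG2}, and then a full neighbourhood of $z=0$, where I would invoke \cref{ThmG} together with the exact integral representations (\ref{eq72})--(\ref{eq74}). Throughout I would use the identification of (\ref{eq103}) with (\ref{eq1}), namely $f(z)=z$, $g(z)=0$, $p(z)=e^{\alpha z}$, so that $\hat{G}_0(z)=-z^{-1}e^{\alpha z}$ by (\ref{eq43}) and the recursion collapses to (\ref{eq96}) because $g\equiv 0$; moreover $\Phi(z)=-\tfrac{5}{16}z^{-3}$, which accounts for the $t^{-5/2}$ weights appearing in (\ref{eq104p}).

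For the away part the essential difficulty is that every $\hat{G}_n(z)$ carries the factor $e^{\alpha z}$ and therefore grows exponentially, so the bounds of \cref{thm6} diverge as stated. I would remedy this exactly in the spirit of \cref{thmLG2}: pass to the Liouville variable, write the truncation error as $e^{\alpha z}$ times a reduced error, and set up the variation-of-parameters integral equation (\ref{LG90}) with both paths of integration taken along $\hat{\mathcal{L}}_{\alpha}^{(j,k)}(z)$. The key step is to integrate by parts the terms carrying $e^{-\alpha t}\hat{G}_n(t)$ against the homogeneous kernels $e^{\pm u\xi}$; since the combined fast phase is $u\xi(t)+\alpha t$ --- precisely the quantity whose real part is required to be monotonic along the path --- the WKB kernels are majorised by unity along $\hat{\mathcal{L}}_{\alpha}^{(j,k)}(z)$, and the boundary contributions assemble into the position-dependent correction factor $h(u,z)$ of (\ref{eqh}), which absorbs the leading effect of the forcing's exponential rate relative to the WKB rate. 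Collecting the leading boundary term $h(u,z)\hat{G}_n(z)$ and the two integral remainders, and then controlling $\int|\psi\,\varepsilon_n|$ by its supremum as in \cite[Eqs.~(4.18), (4.21)]{Dunster:2020:LGE}, would yield (\ref{eq104p}) with $\hat{L}_{\alpha,n}^{(j,k)}$ given by (\ref{eq105p}). The side conditions $|\alpha u^{-1}t^{-1/2}|<2$ and (\ref{eq106p}) are exactly what is needed for $h$ to be finite and for the geometric Neumann-type estimate to converge.

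For the neighbourhood of the turning point I would first note that $f=z$, $g=0$ force $\mathcal{A}_{2m+2}\equiv 1$ and $\mathcal{B}_{2m+2}\equiv 0$, so \cref{ThmG} reduces immediately to the form (\ref{eq98m}). To pin down $\gamma(u)$ I would use the exact representations (\ref{eq73}) and (\ref{eq74}): forming $w^{(0,1)}-w^{(-1,0)}$ and comparing with the connection formula (\ref{eq61}), in which $w_{m,0}=\mathrm{Ai}(u^{2/3}z)$, isolates $\gamma(u)$ as a constant multiple of the full-line Laplace transform of the Airy function. The classical identity $\int_{-\infty}^{\infty}\mathrm{Ai}(t)e^{pt}\,dt=e^{p^{3}/3}$, applied after the scaling $t\mapsto u^{2/3}t$ with $p=\alpha u^{-2/3}$, then gives exactly (\ref{eq113}). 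The expansion of $\mathcal{G}(u,z)$ follows from the general identity (\ref{eq66}): since $\mathcal{G}_m$ equals the analytic part at $z=0$ of the away-series, its coefficients are the regular parts $\hat{G}_s^{\ast}(z)$ furnished by \cref{thm:cauchy}. Expanding $\hat{G}_0=-z^{-1}e^{\alpha z}$ and $\hat{G}_1=z^{-1}\hat{G}_0''$ in Laurent series about $z=0$ and discarding their principal parts $-z^{-1}$ and $-(2z^{-4}+\tfrac13\alpha^3 z^{-1})$ respectively produces (\ref{eq116}) and (\ref{eq117}), with the general coefficient being the analytic part of (\ref{eq96}).

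The main obstacle is the away-from-turning-point error bound. The exponential forcing places the problem outside the scope of \cref{thm6}, and the delicate point is to organise the integration by parts so that the effective rate combining $u$ with the $\xi$-derivative of $\alpha z$ collapses into the single factor $h(u,z)$ of (\ref{eqh}) while every remaining integrand stays controllable by the monotonicity of $\Re(u\xi+\alpha t)$; establishing existence of $\sup_t|\varepsilon_n^{(j,k)}(u,t)|$ needed for the estimate, via a suitably modified version of (\ref{LG90}) together with \cite[Chap.~6, Thm.~10.1]{Olver:1997:ASF}, is the remaining technical care. By contrast, once the Laplace--Airy integral is invoked the near-turning-point assertions are essentially bookkeeping.
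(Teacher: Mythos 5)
Your proposal follows essentially the same route as the paper: the away-from-turning-point bound is obtained by adapting the exponential-weight argument of \cref{thmLG2} with $e^{\alpha z}$ in the role of $e^{c\xi}$ and the factor $h(u,z)$ emerging from the integration by parts against the combined phase $u\xi+\alpha t$; the value of $\gamma(u)$ comes from the exact variation-of-parameters representations, a connection formula at $z=0$, and the Laplace transform $\int_{-\infty}^{\infty}e^{pt}\mathrm{Ai}(t)\,dt=e^{p^{3}/3}$ (the paper uses the pair $w^{(-1,1)},w^{(0,1)}$ with (\ref{eq54}) rather than your $w^{(0,1)},w^{(-1,0)}$ with (\ref{eq61}), an immaterial difference); and the coefficients $\hat{G}_{s}^{\ast}$ are the regular parts exactly as you compute them. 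The only detail worth adding is the paper's temporary assumption $\Re(\alpha)>0$ to justify the contour deformation and convergence of the Airy--Laplace integral, removed afterwards by analytic continuation in $\alpha$.
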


\begin{proof}
The forms (\ref{eq42}) and (\ref{eq98m}) follow immediately from the previous subsection since $f(z)$ and $g(z)$ are the same. The coefficients $\hat{G}_{s}(z)$ and $\hat{G}_{s}^{\ast}(z)$ in the theorem follow in a similar manner using $p(z)=e^{\alpha z}$. 

The bound for (\ref{eq104p}) can be derived similarly to (\ref{LG83}), with $e^{\alpha z}$ playing the role of $e^{c\xi}$, and noting that $\xi=\frac{2}{3}z^{3/2}$, $\varepsilon _{n}^{(j,k)}(u,\xi)=z^{-1/4}\hat{\varepsilon}_{n}^{(j,k)}(u,z)$ and $G_{s}(\xi)=z^{-1/4}\hat{G}_{s}(z)$. We omit details, but remark that the step involving integration by parts is aided by introducing the function defined by (\ref{eqh}) and then using (to within an arbitrary integration constant)
\begin{equation}
\int \frac{e^{u\xi +\alpha z}}{h(u,z)}d\xi =\frac{e^{u\xi +\alpha z}}{u},
\end{equation}
in which $z=(\tfrac{3}{2}\xi)^{2/3}$.

It remains to prove (\ref{eq113}). To this end, assume temporarily that $\Re (\alpha) >0$ and set $z=0$ and $p(t)=e^{\alpha t}$ in (\ref{eq72}) and (\ref{eq74}). If we deform the contours with end points $\infty \exp (\pm 2\pi i/3)$ so that they lie on the negative real axis we then get
\begin{multline}
\label{eq107}
 w^{(-1,1)}(u,0)=2\pi iu^{-2/3} \mathrm{Ai}(0)
 \left[ 
 \int_{-\infty }^{0} {e^{\alpha 
t}\mathrm{Ai}_{1} \left(u^{2/3}t \right)dt} 
\right.
\\ \left.
 -\int_{-\infty }^{0} {e^{\alpha t}\mathrm{Ai}_{-1} 
\left(u^{2/3}t \right)dt} \right],
\end{multline}
and
\begin{multline}
\label{eq108}
 w^{(0,1)}(u,0)=2\pi e^{-\pi i/6}u^{-2/3}\mathrm{Ai}(0)
 \left[
 \int_{\infty}^{0}
{e^{\alpha t}\mathrm{Ai}\left(u^{2/3}t \right)dt}
\right.
\\ 
\left.
 -\int_{-\infty }^{0} {e^{\alpha t}\mathrm{Ai}_{1} \left(u^{2/3}t \right)dt}
 \right].
\end{multline}
As a result we have 
\begin{multline}
\label{eq110}
 w^{(-1,1)}(u,0)-w^{(0,1)}(u,0) \\ =2\pi u^{-2/3}\mathrm{Ai}(0) \left[ \int_{-\infty }^{0} {e^{\alpha t}\left\{\left( i+e^{-\pi
i/6} \right)\mathrm{Ai}_{1} \left(u^{2/3}t \right)-i\mathrm{Ai}_{-1} \left(u^{2/3}t \right) \right\}dt} \right. \\ \left. +e^{-\pi i/6}\int_{0}^{\infty} {e^{\alpha t}\mathrm{Ai}\left(u^{2/3}t \right)dt} \right].
\end{multline}
Hence from (\ref{eq25})
\begin{equation}
\label{eq111}
w^{(-1,1)}(u,0)-w^{(0,1)}(u,0)=2\pi e^{-\pi 
i/6}u^{-2/3}\mathrm{Ai}(0)\int_{-\infty }^{\infty} {e^{\alpha t}\mathrm{Ai}\left( 
u^{2/3}t \right)dt}.
\end{equation}
Now from \cite[Eq. 9.10.13]{NIST:DLMF} we find the exact expression
\begin{equation}
\label{eq112}
\int_{-\infty }^{\infty} {e^{\alpha t}\mathrm{Ai}\left(u^{2/3}t \right)dt} 
=\frac{1}{u^{2/3}}\exp \left(\frac{\alpha^{3}}{3u^{2}} 
\right).
\end{equation}
But from setting $z=0$ in (\ref{eq54}) (and noting $m$ dependence in the present situation) we have
\begin{equation}
\label{eq109}
w^{(-1,1)}(u,0)-w^{(0,1)}(u,0)=2\pi e^{-\pi i/6}\gamma(u)\mathrm{Ai}(0).
\end{equation}
Hence from (\ref{eq111}) - (\ref{eq109}) we arrive at (\ref{eq113}). Finally, by analytic continuation we can relax the restriction on $\alpha$.
\end{proof}

\subsection{Integrals involving the Airy function and the exponential function}
\label{airyint}

Another application of our asymptotic approximations for particular solutions of (\ref{eq1}) is extracting this for the integrals that appear in their variation of parameters representations.

Let us illustrate this first with the solution $w^{(-1,0)}(u,z)$ of (\ref{eq69}), recalling that we assume $u>0$. Using (\ref{eq73}) and its differentiated form
\begin{multline}
\label{eq118}
\frac{\partial w^{(-1,0)}(u,z)}{\partial z}=2\pi e^{\pi i/6}
\left[\mathrm{Ai}'_{-1} 
\left(u^{2/3}z \right)\int_{\infty}^{z} {p(t)\mathrm{Ai}\left(u^{2/3}t 
\right)dt} 
\right.
\\ 
\left.
 -\mathrm{Ai}'\left(u^{2/3}z \right)\int_{\infty \exp
(-2\pi i/3)}^{z} {p(t)\mathrm{Ai}_{-1} \left(u^{2/3}t \right)dt}
\right],
\end{multline}
we find by solving for the integral involving $\mathrm{Ai}(u^{2/3}t)$, and using (\ref{eq71}), that
\begin{multline}
\label{eq119}
\int_{\infty}^{z} {p(t)\mathrm{Ai}\left(u^{2/3}t \right)dt} =\mathscr{W}\left\{ 
{\mathrm{Ai}\left(u^{2/3}z \right),w^{(-1,0)}(u,z)} \right\} \\ 
= \mathrm{Ai}\left(u^{2/3}z \right)
\frac{\partial w^{(-1,0)}(u,z)}{\partial z}
-u^{2/3}{\mathrm{Ai}}'\left(u^{2/3}z 
\right)w^{(-1,0)}(u,z).
\end{multline}

One similarly obtains
\begin{equation}
\label{eq120}
\int_{\infty}^{z} {p(t)\mathrm{Ai}\left(u^{2/3}t \right)dt} =\mathscr{W}\left\{ 
{\mathrm{Ai}\left(u^{2/3}z \right),w^{(0,1)}(u,z)} \right\},
\end{equation}
and from (\ref{eq71}), (\ref{eq119}), along with the connection formula
\begin{equation}
\label{eq120a}
w^{(-1,0)}(u,z)=
w^{(-1,1)}(u,z)-2\pi e^{\pi i/6}\gamma(u)
\mathrm{Ai}_{-1} \left(u^{2/3}z\right),
\end{equation}
the third representation
\begin{equation}
\label{eq121}
\int_{\infty}^{z} {p(t)\mathrm{Ai}\left(u^{2/3}t \right)dt} =\mathscr{W}\left\{ 
\mathrm{Ai}\left(u^{2/3}z \right),w^{(-1,1)}(u,z) \right\}
-u^{2/3}\gamma(u).
\end{equation}

The method is to now substitute our new uniform asymptotic expansions into the appropriate one of these three exact expressions, depending on which part of the complex plane $z$ lies in, and likewise replace the Airy function (and its derivative) by its asymptotic expansion if $z$ is bounded away from the origin. 

We shall do this for $p(t)=e^{\alpha t}$. Then for $|z| \ge r_{0}$ ($r_{0}>0$) and $\vert \arg(z)\vert \le \pi -\delta$ we use \cref{thmexp} along with (\ref{eq119}), (\ref{eq120}) and \cite[Sect. 9.7]{NIST:DLMF}. Consequently, uniformly for $u \rightarrow \infty$, we derive
\begin{multline}
\label{eq122}
 \int_z^{\infty} {e^{\alpha t}\mathrm{Ai}\left(u^{2/3}t \right)dt} 
=\frac{1}{2\sqrt \pi u^{7/6}z^{3/4}}\exp \left( {\alpha z-\frac{2}{3}uz^{3/2}} \right) \\ 
 \times \left\{ {1+\frac{48\,\alpha z-41}{48uz^{3/2}}+\frac{4608\,\alpha 
^{2}z^{2}-9696\,\alpha z+9241}{4608u^{2}z^{3}}+\frac{S(\alpha 
z)}{u^{3}z^{9/2}}+\mathcal{O}\left(\frac{1}{u^{4}z^{2}} \right)} \right\},
\end{multline}
where
\begin{equation}
\label{eq123}
S(t)=48t^{3}-185t^{2}+\frac{35905}{96}t-\frac{5075225}{13824}.
\end{equation}

If we use (\ref{eq121}) instead of (\ref{eq119}) and (\ref{eq120}), then for $|z| \ge r_{0}$ and $\vert \arg(z)\vert \le \tfrac{2}{3}\pi -\delta $ we obtain uniformly for $u \rightarrow \infty$
\begin{multline}
\label{eq124}
 \int_{-z}^{\infty} {e^{\alpha t}\mathrm{Ai}\left(u^{2/3}t \right)dt}
=\frac{1}{u^{2/3}}\exp \left(\frac{\alpha^{3}}{3u^{2}} 
\right) \\+\frac{e^{-\alpha z}}{\sqrt \pi u^{7/6}z^{3/4}}  \left[ \sin \left( {\frac{2}{3}uz^{3/2}-\frac{1}{4}\pi} \right)\left\{
{1-\frac{4608\,\alpha^{2}z^{2}+9696\,\alpha z+9241}{4608u^{2}z^{3}}+\mathcal{O}\left( 
\frac{1}{u^{4}z^{2}} \right)} \right\} \right. \\ 
-\frac{1}{48uz^{3/2}}\left. \cos \left( {\frac{2}{3}uz^{3/2}-\frac{1}{4}\pi} 
\right)\left\{ 48\alpha z+41+\frac{S(-\alpha z)}{u^{2}z^{3}}+\mathcal{O}\left( 
\frac{1}{u^{4}z} \right) \right\} \right].
\end{multline}

Finally for $z\leq r_{0}$ we do not expand the Airy functions, and again from (\ref{eq119}), (\ref{eq120}), (\ref{eq121}) and \cref{thmexp} we have uniformly as $u \rightarrow \infty$
\begin{multline}
\label{eq125}
\int_z^{\infty} e^{\alpha t} \mathrm{Ai}\left(u^{2/3}t \right) dt 
=\frac{1}{u^{2/3}}\exp \left(\frac{\alpha^{3}}{3u^{2}}
\right) \\ \times
\left[ {\mathrm{Ai}}' \left(u^{2/3}z \right) \mathrm{Wi}^{(j,k)} \left(u^{2/3}z \right) - \mathrm{Ai} \left(u^{2/3}z \right) {\mathrm{Wi}^{(j,k)}}' \left(u^{2/3}z \right)
+\delta_{-j,k}      \right] \\
+\frac{1}{u^{4/3}}{\mathrm{Ai}}'\left(u^{2/3}z \right)
\left\{\sum\limits_{s=0}^{m-1} \frac{\hat{G}_{s}^{\ast}(z)}{u^{2s}}+\mathcal{O}\left( \frac{1}{u^{2m}}\right)
\right\}
\\-\frac{1}{u^{2}}\mathrm{Ai}\left(u^{2/3}z \right)
\left\{\sum\limits_{s=0}^{m-1} 
\frac{{\hat{G}_{s}^{\ast \prime}}(z)}{u^{2s}}
+\mathcal{O}\left( \frac{1}{u^{2m}} \right)
\right\},
\end{multline}
where $\delta$ is the Kronecker delta function, and the first two coefficients in the series are given by (\ref{eq116}) and (\ref{eq117}). For optimal accuracy one should take $(j,k)=(0,1)$ for $0\leq \arg(z) \leq 2\pi/3$, $(j,k)=(-1,0)$ for $-2\pi/3\leq \arg(z) \leq 0$, and $(j,k)=(-1,1)$ for $|\arg(-z)| \leq \pi/3$.

\section*{Acknowledgments} I thank the two anonymous referees for a number of helpful comments that improved the manuscript. Financial support from Ministerio de Ciencia e Innovaci\'on, Spain, project PGC2018-098279-B-I00 (MCIU/AEI/FEDER, UE) is acknowledged. 

\bibliographystyle{siamplain}
\bibliography{biblio}

\end{document}